\pgfplotsset{compat=1.17}
    \newtheorem{definition}{Definition}
    \newtheorem{lemma}[definition]{Lemma}
    \newtheorem{theorem}[definition]{Theorem}
    \newtheorem{proposition}[definition]{Proposition}
    \newtheorem{corollary}[definition]{Corollary}
    \newtheorem{main}{Theorem}
    \theoremstyle{remark}
    \newtheorem{example}[definition]{Example}
    \newtheorem{remark}[definition]{Remark}
    \def\thm@space@setup{\thm@preskip=0.5cm   \thm@postskip=0.5cm}
\newcommand{\N}{\mathbb{N}}
\newcommand{\Z}{\mathbb{Z}}
\newcommand{\R}{\mathbb{R}}
\newcommand{\C}{\mathbb{C}}
\newcommand{\Tor}{\mathbb{T}}
\newcommand{\For}{\mathbb{F}}
\newcommand{\E}{\mathrm{e}}
\newcommand{\I}{\mathrm{i}}
\newcommand{\Sym}{\mathrm{Sym}}
\newcommand{\esper}{\mathbb{E}}
\newcommand{\proba}{\mathbb{P}}
\newcommand{\card}{\mathrm{card}}
\newcommand{\eps}{\varepsilon}
\newcommand{\lle}{\left[\!\left[} 
\newcommand{\rre}{\right]\!\right]}
\newcommand{\sym}{\mathfrak{S}}
\newcommand{\DD}[1]{\,d\hspace{-0.3mm}{#1}}
\newcommand{\comment}[1]{}
\renewcommand{\Re}{\mathrm{Re}}
\renewcommand{\Im}{\mathrm{Im}}
\newcommand{\dtv}{d_{\mathrm{TV}}}
\newcommand{\frakp}{\mathfrak{p}}
\newcommand{\frake}{\mathfrak{e}}
\newcommand{\frakh}{\mathfrak{h}}
\newcommand{\frakP}{\mathfrak{P}}
\newcommand{\frakE}{\mathfrak{E}}
\newcommand{\frakH}{\mathfrak{H}}
\newcommand{\Irr}{\mathrm{Irr}}
\setlist[enumerate]{itemsep=10pt,topsep=10pt}
\setlist[itemize]{itemsep=5pt,topsep=5pt}
\title[Mod-poisson approximation schemes and higher-order Chen--Stein inequalities]{Mod-poisson approximation schemes and\\ higher-order Chen--Stein inequalities}
\author{Pierre-Lo\"ic M\'eliot}
\address{Institut de math\'ematiques d'Orsay, Universit\'e Paris-Saclay, France}
\email{pierre-loic.meliot@universite-paris-saclay.fr}
\thanks{}
\author{Ashkan Nikeghbali}
\address{Institute of Mathematics, Universit\"at Z\"urich, Switzerland}
\email{ashkan.nikeghbali@math.uzh.ch}
\thanks{}
\author{Gabriele Visentin}
\address{Department of Mathematics, RiskLab, ETH Zürich, Switzerland}
\email{gabriele.visentin@math.ethz.ch}
\thanks{}
\begin{document}
\begin{abstract} In this article, we provide an extension of the Chen--Stein inequality for Poisson approximation in the total variation distance for sums of independent Bernoulli random variables in two ways. We prove that:
\begin{itemize}
\item we can improve the rate of convergence (hence the quality of the approximation) by using explicitly constructed signed or positive probability measures;
\item we can extend the setting to possibly dependent random variables.
\end{itemize}
The framework which allows this is that of mod-Poisson convergence and more precisely those mod-Poisson convergent sequences whose residue functions can be expressed as a specialization of the generating series of elementary symmetric functions. This combinatorial reformulation allows us to have a general and unified framework in which we can fit the classical setting of sums of independent Bernoulli random variables as well as other examples coming e.g. from probabilistic number theory and random permutations.
\end{abstract}

\maketitle
\tableofcontents
\bigskip

\section{Higher-order Chen--Stein inequalities}
\subsection{Poisson approximation and the Chen--Stein inequality} This article is concerned with the problem of approximation of the distribution of positive integer-valued random variables $X_{n\geq 1}$ stemming from number theory or from combinatorics. The simplest case is when $X_n = \sum_{i=1}^n Y_i$ is a sum of independent Bernoulli random variables with $p_i=\proba[Y_i=1]=1-\proba[Y_i=0] $; the parameters $p_i$ belong to $[0,1]$ and are arbitrary. Le Cam's inequality \cite{LeCam60}, which follows immediately from the subadditivity of the total variation distance with respect to convolution of measures, ensures that if $\lambda_n = \sum_{i=1}^n p_i$ and $(\sigma_n)^2 = \sum_{i=1}^n (p_i)^2$, then
$$\dtv(\mu_{X_n},\mathrm{Poisson}(\lambda_n)) = \frac{1}{2}\sum_{k=0}^\infty \left|\mu_{X_n}(k) - \frac{\E^{-\lambda_n}\,(\lambda_n)^k}{k!}\right| \leq (\sigma_n)^2
$$
where $\mu_{X_n}(k) = \proba[X_n=k]$. A more precise inequality due to Chen ensures that one can divide the right-hand side by $\lambda_n$ while keeping the inequality (see \cite{Chen74,BE83,BHS92,Steele94}):
$$\dtv(\mu_{X_n},\mathrm{Po}(\lambda_n)) \leq \frac{1-\E^{-\lambda_n}}{\lambda_n} \,(\sigma_n)^2 \leq \frac{(\sigma_n)^2}{\lambda_n}.$$
This follows from the adaptation of Stein's method to the Poisson distribution $\mathrm{Po}(\lambda)$, which satisfies the functional equation $\esper[X\,f(X)]=\lambda\,\esper[f(X+1)]$. In \cite[Theorem 4.2]{EK09}, a correction term for the Poisson distribution $\mathrm{Po}(\lambda_n)$ has been computed, allowing to have an upper bound of order $(\sigma_n)^3/(\lambda_n)^{3/2}$ for the total variation distance between $\mu_{X_n}$ and the modified Poisson distribution. This correction procedure has been perfected in \cite{CDMN20}: by using Fourier analysis arguments instead of the Chen--Stein method, a signed measure $\nu_n^{(r)}=\nu_n^{(r)}(\{p_1,\ldots,p_n\})$ has been constructed for any $r \geq 1$, with the property that if $(\sigma_n)^2= \sum_{i=1}^n (p_i)^2$ stays bounded, then
\begin{equation}
\dtv\left(\mu_{X_n},\nu_n^{(r)}\right) = O\left((\lambda_n)^{-\frac{r+1}{2}}\right).\label{eq:previous_1}
\end{equation}
However, the constant in the $O(\cdot)$ in the right-hand side of Equation \eqref{eq:previous_1} grows with $r$ and with $(\sigma_n)^2$, and by looking at the details of the proof of \cite[Theorem 3.11]{CDMN20}, one can only give a constant which grows like $\E^{(\sigma_n)^2}$, which might be much larger than $(\lambda_n)^{\frac{r+1}{2}}$. The purpose of this article is to give a better \emph{unconditional bound}, which is also much \emph{more general}, because it holds for approximations of discrete distributions which are not convolutions of Bernoulli distributions. In this setting, the set of probabilities $\{p_1,\ldots,p_n\}$ will be replaced by a square-summable sequence $A$ which encodes the properties of the random model.
\medskip

\subsection{Erd\texorpdfstring{\H{o}}{ö}s--Kac central limit theorem and its Poisson refinement}\label{sub:erdos_kac}
Given an integer $k \geq 1$, we denote $\omega(k)$ its number of distinct prime divisors, with the multiplicities not taken into account; for instance, $\omega(120)=\omega(2^3\times 3\times 5)=3$. The random variable $\omega_{n}$ is then defined by
$$\omega_n = \omega(U_{\lle 1,n\rre}),\quad \text{with $U_{\lle 1,n\rre}$ uniformly distributed on }\lle 1,n\rre = \{1,2,3,\ldots,n\}.$$
The asymptotics of the distribution of $\omega_n$ is one of the first result from probabilistic number theory. Indeed Erd\H{o}s and Kac proved in \cite{EK40} that the following convergence in law holds:
$$\frac{\omega_n-\log \log n}{\sqrt{\log \log n}} \rightharpoonup_{n \to \infty} \mathcal{N}(0,1).$$
The Gaussian approximation is far from being accurate and Rényi and Turán proved that the error in Kolmogorov distance is of order $O((\log \log n)^{-\frac{1}{2}})$, see \cite{RT58}. A better discrete approximation is provided by the Poisson law with parameter $\log \log n$. Indeed, the Fourier transform of $\omega_n$ can be estimated by the Selberg--Delange method:
\[
\esper[\E^{\I \xi \omega_n}] = \E^{(\log \log n + \gamma)(\E^{\I \xi}-1)}\,\left(\psi_\omega(\xi)+O\!\left(\frac{1}{\log n}\right)\right) \tag{MP$\omega$}\label{eq:selberg_delange}
\]
with $\gamma=0.577\ldots$ equal to the Euler--Mascheroni constant, and 
$$\psi_\omega(\xi) = \prod_{n \in \N^*} \left(1+\frac{\E^{\I \xi}-1}{n}\right)\E^{-\frac{\E^{\I \xi}-1}{n}}\,\, \prod_{p \in \mathbb{P}}\left(1+\frac{\E^{\I \xi}-1}{p}\right)\E^{-\frac{\E^{\I \xi}-1}{p}}.$$
We recognize the Fourier transform of the distribution $\mathrm{Po}(\log \log n + \gamma)$, multiplied by a residue which converges uniformly towards a smooth function $\psi(\xi)$ on the torus $\Tor = \R/2\pi \Z$. This can be used in order to prove that
$$\dtv(\omega_n,\mathrm{Po}(\log \log n+\gamma)) = O((\log \log n)^{-1}).$$
In \cite[Theorem 4.9]{CDMN20}, some signed measures $\nu_n^{(r\geq 1)}$ have been constructed with the property that 
\begin{equation}
\dtv(\omega_n,\nu_n^{(r)}) \,(\log \log n)^{\frac{r+1}{2}} \to_{n \to \infty} C_r\label{eq:previous_2}
\end{equation}
for some explicit constants $C_{r\geq 1}$; the case $r=1$ corresponds to the Poisson distribution with parameter $\log \log n + \gamma$. The speed of convergence in Equation \eqref{eq:previous_2} is a $O((\log \log n)^{-\frac{1}{2}})$, so the estimate above is not as good as an unconditional upper bound, which we aim to obtain at the end of this article.
\medskip

\subsection{Mod-Poisson approximation schemes}
The two asymptotic estimates \eqref{eq:previous_1} and \eqref{eq:previous_2} are very similar, but the first one regards the approximation in distribution of a sum of independent random variables, whereas the later estimate is about $\omega_n$ whose law does not admit such a representation. The connection between these two models comes from the asymptotics of their Fourier transforms. Consider a sum $X_n = \sum_{i=1}^n Y_i$ of independent Bernoulli random variables, with each $Y_i$ of parameter $p_i$, and with a sequence of parameters $\rho=(p_i)_{i \geq 1}$ such that $\sum_{i=1}^\infty p_i = +\infty$ and $\sum_{i=1}^\infty (p_i)^2 < +\infty$. Since $\esper[\E^{\I \xi Y_i}] = (1+p_i(\E^{\I \xi}-1))$, we can write:
\[
\esper[\E^{\I \xi X_n}] = \E^{\lambda_n (\E^{\I \xi}-1)}\,\,\left(\psi_{\rho}(\xi)+O\!\left(\sum_{i>n} (p_i)^2 \right)\right) \tag{MP$\rho$}\label{eq:mod_poisson_basic}
\]
with $\lambda_n = \sum_{i=1}^n p_i$, and 
$$\psi_\rho(\xi) = \prod_{i=1}^\infty\left(1+p_i(\E^{\I \xi}-1)\right)\E^{-p_i(\E^{\I \xi}-1)}.$$
In Equations \eqref{eq:selberg_delange} and \eqref{eq:mod_poisson_basic}, we have:
\begin{itemize}
    \item a sequence of random variables $(X_n)_{n \in \N}$ with values in $\N$,
    \item and a sequence of parameters $(\lambda_n)_{n \in \N}$ growing to infinity,
\end{itemize}  
such that the ratio of Fourier transforms 
\[
\frac{\esper[\E^{\I \xi X_n}]}{\esper[\E^{\I \xi \mathrm{Po}(\lambda_n)}]} = \esper[\E^{\I \xi X_n}]\,\E^{-\lambda_n(\E^{\I \xi}-1)} = \psi_n(\xi)\tag{MP}\label{eq:mod_poisson}
\]
converges on the unit circle towards an analytic function $\psi(\xi)$. Notice that the residue $\psi(\xi)$ has the same form for the two models: a convergent infinite product of terms $(1+p(\E^{\I \xi}-1))\,\E^{-p(\E^{\I \xi}-1)}$. This is a general phenomenon for discrete random models stemming from number theory or combinatorics, and it will enable us to use a unified approach with the same techniques. A sequence of integer-valued random variables $(X_n)_{n \in \N}$ for which the ratios of Fourier transforms $\psi_n$ defined in Formula \eqref{eq:mod_poisson} converge uniformly on the unit circle towards a continuous function $\psi$ is called \emph{mod-Poisson convergent} with parameters $(\lambda_n)_{n \in \N}$. This notion has been introduced in \cite{BKN09,KN10} and studied thoroughly in \cite{DKN15,FMN16,CDMN20}; in these later articles, the exponent $\phi(\xi)=\E^{\I \xi}-1$ is sometimes replaced by the exponent of a general infinitely divisible distribution supported by the lattice $\Z$ (\emph{mod-$\phi$ convergence}). Here, we shall focus on the mod-Poisson case in order to prove precise estimates of the distribution of $X_n$ when the residue defined by Formula \eqref{eq:mod_poisson} converges.

\begin{definition}[Approximation scheme of order $r$]
Let $(X_n)_{n \in \N}$ be a sequence of random variables with values in $\N$, which converges mod-Poisson with parameters $(\lambda_n)_{n \in \N}$ and limiting residue 
$$\psi(\xi) = \lim_{n \to \infty} \psi_n(\xi) = \lim_{n \to \infty} \esper[\E^{\I \xi X_n}]\,\E^{-\lambda_n(\E^{\I \xi} -1)}.$$
 We assume that the residues $\psi_n$ and $\psi$ are given by the following convergent power series:
$$\psi_n(\xi) = 1+\sum_{s=1}^\infty b_{s,n}\,(\E^{\I \xi}-1)^s\qquad;\qquad \psi(\xi)=1+\sum_{s=1}^\infty b_s\,(\E^{\I \xi}-1)^s.$$
Then, the \emph{approximation scheme of order $r$} of the law $\mu_n$ of $X_n$ is the signed measure $\nu_n^{(r)}$ with Fourier transform
$$\widehat{\nu}_n^{(r)}(\xi) = \sum_{k=0}^\infty \widehat{\nu}_n^{(r)}(k)\,\E^{\I k \xi} = \E^{\lambda_n(\E^{\I \xi}-1)}\,\left(1+\sum_{s=1}^r b_{s,n}\,(\E^{\I \xi}-1)^s\right).$$
The \emph{derived approximation scheme of order $r$} of $X_n$ is defined similarly, by truncation of the coefficients of the limiting residue $\psi(\xi)$:
$$\widehat{\nu}^{(r)}_{n,*}(\xi) = \E^{\lambda_n(\E^{\I \xi}-1)}\,\left(1+\sum_{s=1}^r b_{s}\,(\E^{\I \xi}-1)^s\right).$$
\end{definition}

Notice that by definition, $\nu_n^{(0)}=\nu_{n,*}^{(0)}$ is the Poisson law $\mathrm{Po}(\lambda_n)$. For $r \geq 1$,  an explicit formula for $\nu_n^{(r)}$ is provided by \cite[Lemma 3.8]{CDMN20}:
$$\nu^{(r)}_n(k) = \sum_{0\leq t \leq s \leq r} (-1)^{s-t}\,\binom{s}{t}\,b_{s,n}\, \nu_n^{(0)}(k-t),$$
and similarly for the derived approximation scheme. This explicit formula implies that $\nu_n^{(r)}$ is a well defined signed measure on $\N$, and that $\sum_{k \in \N} \nu_n^{(r)}(k)=1$. Our goal is then to control
$$\dtv(\mu_n,\nu_n^{(r)}) = \frac{1}{2}\sum_{k =0}^\infty \left|\proba[X_n=k] - \nu_n^{(r)}(k)\right|$$
and $\dtv(\mu_n,\nu_{n,*}^{(r)})$. Before going on, let us remark that up to a modification of the sequence $(\lambda_n)_{n \in \N}$, we can assume $b_{1,n}=b_1=0$. Indeed, replacing $\lambda_n$ by $\lambda_n+b_{1,n}$ removes the term $b_{1,n}$ from the power series $\psi_n(\xi)$ (this also modifies the other coefficients $b_{s\geq 2,n}$).\medskip

Informally, our main results are the following:
\begin{enumerate}
    \item If $(X_n)_{n \in \N}$ converges in the mod-Poisson sense and if 
    $$|b_{s,n}|\leq \left(\frac{\tau_n}{\sqrt{s}}\right)^s$$
    for some constants $\tau_n>0$, then $\dtv(\mu_n,\nu_n^{(r)}) \leq C \left(\frac{D\,\tau_n}{\sqrt{\lambda_n}}\right)^{r+1}$ for some universal constants $C$ and $D$ (see Theorem \ref{main:A} for a more precise statement).
    \item In the particular case where $X_n = \sum_{i=1}^n Y_i$ is the sum of independent Bernoulli variables, the condition above is satisfied with $(\tau_n)^2$ proportional to $(\sigma_n)^2=\sum_{i=1}^n (p_i)^2$. Therefore, we get a higher-order Chen--Stein inequality (see Theorem \ref{main:B}).
    \item If the mod-Poisson convergence admits a Bernoulli-like asymptotic residue $\psi$ (with the same form as in Equations \eqref{eq:selberg_delange} and \eqref{eq:mod_poisson_basic}), and if the convergence $\psi_n\to \psi$ \begin{itemize}
        \item can be expanded to a complex disc containing the unit circle,
        \item is fast enough,
    \end{itemize}
    then we get an unconditional upper bound  (see Theorem \ref{main:C})
    $$\dtv(\mu_n,\nu_{n,*}^{(r)})\leq \frac{C(\psi)}{(\lambda_n)^{\frac{r+1}{2}}}.$$ 
\end{enumerate}
\medskip

\subsection{Outline of the paper} The theory of mod-convergent sequences with respect to an arbitrary reference infinitely divisible distribution $\phi$ has been developed in \cite{DKN15,FMN16,FMN19,BMN19,CDMN20}. In the specific case where the reference law is the Poisson distribution, the residue of mod-Poisson convergence can be expressed as a specialisation of the generating series of \emph{elementary symmetric functions}. This combinatorial reformulation, which first appeared in \cite[Section 4.2]{CDMN20}, is one of the main argument which enables the extension of the higher order Chen--Stein inequalities to random variables which are not sums of independent Bernoulli variables. We detail this idea in Section \ref{sec:symmetric_functions}, and we then state our main results and the relevant hypotheses for the random models. In Section \ref{sec:fourier_inversion}, we use Fourier inversion on the unit circle $\Tor$ in order to estimate the total variation distance between a probability measure $\mu$ on $\N$ and its approximation scheme $\nu^{(r)}$ or order $r \geq 1$. Classical arguments allow us to remove most terms of the Fourier inversion formula, and what remains is a sum of integrals akin to the integral expressions of Hermite polynomials. These Hermite-like functions are studied in Section \ref{sec:hermite}, and the summation of all the estimates and the proof of the main results is performed in Section \ref{sec:riemann_sum}. Finally, in Section \ref{sec:hankel}, we revisit the proof of the Flajolet--Odlyzko transfer theorem (see \cite{FO90}) which yields the asymptotic of the coefficients of a power series with algebraic singularities. This enables us to obtain an unconditional upper bound on the total variation distance between the law of 
\begin{itemize}
    \item the number $C_n$ of cycles in a random permutation;
    \item or, the number $D_n$ of irreducible divisors in a uniformly chosen random polynomial with given degree and coefficients in a finite field
\end{itemize}
and its derived approximation scheme of order $r \geq 1$.
 This discussion relies on complex analysis arguments and integrals along Hankel contours, and a similar argument can be used for the number $\omega_n$ of prime factors of a random integer; see our Remark \ref{rem:next} at the very end of the paper.
\bigskip

\section{Symmetric functions and adapted sequences of distributions}\label{sec:symmetric_functions}
The objective of this section is to introduce all the relevant hypotheses for our main theorems, as well as a list of examples which will satisfy these hypotheses. A large part of the discussion will rely on the combinatorics of the \emph{algebra of symmetric functions} $\Sym$, for which we refer to \cite[Chapter I]{Mac95} and \cite[Chapter 2]{Mel17}. Recall that a symmetric function is a formal linear combination $f=\sum_{I=(i_1,\ldots,i_r)} f_I \,x_I$ of monomials $x_I=x_{i_1}x_{i_2}\cdots x_{i_r}$ with:
\begin{itemize}
    \item the indices $I$ in $\bigsqcup_{r=0}^\infty (\N^*)^r$;
    \item the variables $x_1,x_2,\ldots$ forming an infinite commutative sequence;
    \item the coefficients $f_I$ in some field, say $\R$;
    \item $\deg f = \sup\{|I|\,\,\text{with }f_I \neq 0\} < +\infty$;
    \item $f$ invariant by any permutation of the variables: for any $\sigma \in \sym(\infty) = \bigcup_{n=1}^\infty \sym(n)$, $f^\sigma=f$.
\end{itemize}
We shall use two important algebraic bases of $\Sym$ over $\R$: the Newton \emph{power sums}
$$\frakp_{k \geq 1} = \sum_{i=1}^\infty (x_i)^k$$
and the \emph{elementary symmetric functions}
$$\frake_{k \geq 1} = \sum_{1\leq i_1<i_2<\cdots < i_k} x_{i_1}x_{i_2}\cdots x_{i_k}.$$
Thus, $\mathrm{Sym} = \R[\frakp_1,\frakp_2,\ldots] = \R[\frake_1,\frake_2,\ldots]$. The change of basis formula between power sums and elementary symmetric functions is encoded by the two generating series $\frakP(z) = \sum_{k=1}^\infty \frac{\frakp_k}{k}\,z^k$ and $\frakE(z) = 1+\sum_{k=1}^\infty \frake_k\,z^k$:
\begin{align*}
\frakE(z) = \prod_{i=1}^\infty (1+x_iz) = \exp(\sum_{i=1}^\infty \log(1+x_iz)) = \exp(-\sum_{k=1}^\infty \sum_{i=1}^\infty \frac{(-x_iz)^k}{k})=\exp(-\frakP(-z)).
\end{align*}
If $A=\{a_1,a_2,\ldots\}$ is a summable family and $f$ is a symmetric function, we shall denote $f(A)$ the real number obtained by replacing the variables $x_{i}$ by the $a_i$'s (setting $x_i=0$ if $A$ is finite and $i>|A|$). This always gives a convergent power series, because $f$ is a polynomial in the power sums, and 
$$\frakp_k(A) = \sum_{i\geq 1} (a_i)^k \,\text{ is absolutely convergent for any }k\geq 1.$$
The map $f \in \Sym \mapsto f(A) \in \R$ is a morphism of real algebras; it is also called a \emph{specialisation} of the algebra of symmetric functions. More generally, we call specialisation of $\Sym$ any morphism of algebras from $\Sym$ to $\R$; such a morphism does not necessarily come from a summable family $A=\{a_1,a_2,\ldots\}$. Given the generating functions $\frakE(z)$ and $\frakP(z)$ of the elementary symmetric functions and of the power sums and a specialisation $A$ of $\Sym$, we shall denote $\frakE(A,z)$ and $\frakP(A,z)$ the corresponding analytic functions of the variable $z$, assuming the convergence of these power series.
\medskip

\subsection{Sums of independent Bernoulli variables}\label{sub:sum_bernoulli}
Let us consider as in the introduction a sum $X_n = \sum_{i=1}^n Y_i$ of independent Bernoulli variables, with $Y_i \sim \mathrm{Be}(p_i)$. We are going to explain how to compute the coefficients $b_{s,n}$ of the residue of deconvolution
$\psi_n(\xi) = \esper[\E^{\I \xi X_n}]\,\E^{-\lambda_n(\E^{\I \xi}-1)}$, with $\lambda_n = \sum_{i=1}^n p_i$. To begin with, let us remark that the Fourier transform of $X_n$ is a specialisation of the generating series $\frakE(z)$ of the elementary symmetric functions. Indeed, it is obtained by taking  $z=\E^{\I \xi}-1$ and the alphabet $\{p_1,p_2,\ldots,p_n\}$:
\begin{align*}
\esper[\E^{\I \xi X_n}] &= \prod_{i=1}^n \esper[\E^{\I \xi Y_i}] = \prod_{i=1}^{n} (1+p_i(\E^{\I \xi}-1)) = \frakE(\{p_1,\ldots,p_n\},\E^{\I \xi }-1) \\ 
&= \exp(-\frakP(\{p_1,\ldots,p_n\},1-\E^{\I \xi})) = \exp(\sum_{k=1}^\infty \frac{(-1)^{k-1}\,\frakp_k(p_1,\ldots,p_n)}{k}\,(\E^{\I \xi}-1)^k).
\end{align*}
Dividing by $\E^{\lambda_n(\E^{\I \xi}-1)}$ amounts to remove the term of order $k=1$ from the exponential, so
$$\psi_n(\xi) = \exp(\sum_{k=2}^\infty \frac{(-1)^{k-1}\,\frakp_k(p_1,\ldots,p_n)}{k}\,(\E^{\I \xi}-1)^k).$$
The coefficients $b_{s,n}$ are then obtained by expanding the exponential series. There are two ways to perform this computation:
\begin{itemize}
    \item specialisation with $\frakp_1=0$. Given a countable family of real numbers $A=\{a_1,a_2,\ldots\}$ with $\sum_{i\geq 1}(a_i)^2 < +\infty$, we define a morphism of real algebras $f \in \Sym \mapsto f(A') \in \R$ by setting:
    $$\frakp_1(A')=0\qquad;\qquad \frakp_{k \geq 2}(A') = \frakp_k(A) = \sum_{i \geq 1}(a_i)^k.$$
    Since $(\frakp_{k})_{k \geq 1}$ is an algebraic basis of $\Sym$, the formul{\ae} above entirely determine the specialisation $A'$ ($A'$ is also sometimes called a \emph{virtual alphabet}). Now, with $A = \{p_1,\ldots,p_n\}$, we have
    $$\psi_n(\xi) = \exp(\sum_{k=1}^\infty \frac{(-1)^{k-1}\,\frakp_k(A')}{k}\,(\E^{\I \xi}-1)^k) = 1 + \sum_{k=1}^\infty \frake_k(A')\,(\E^{\I \xi}-1)^k=\frakE(A',\E^{\I\xi}-1),$$
    so $b_{s,n} = \frake_s(\{p_1,\ldots,p_n\}')$ for any $s \geq 1$.

    \item inclusion-exclusion formula with the true elementary symmetric functions. Let us make the previous argument a bit more explicit. If we expand the exponential generating series $\exp(-\frakP(-z))$, we get:
    $$\frake_k = \sum_{\lambda \in \mathfrak{Y}(k)} \frac{(-1)^{k-\ell(\lambda)}}{z_\lambda}\,\frakp_{\lambda},$$
    where the sum runs over  the set $\mathfrak{Y}(k)$ of integer partitions $\lambda=(\lambda_1\geq \lambda_2 \geq \cdots \geq \lambda_{\ell(\lambda)} \geq 1)$ of size $k = \sum_{i=1}^{\ell(\lambda)} \lambda_i$; $\frakp_{\lambda} = \frakp_{\lambda_1}\,\frakp_{\lambda_2}\cdots \frakp_{\ell(\lambda)}$ for any integer partition $\lambda$; and $z_\lambda$ is a combinatorial coefficient, such that $\frac{k!}{z_\lambda}$ is the number of permutations with size $k$ and with cycle-type $\lambda$. A multiplicative expression of $z_\lambda$ in terms of the parts of the integer partition $\lambda$ is provided by \cite[Chapter I, Equation (2.14)]{Mac95}. By specialisation of the formula above with respect to the two formal alphabets $A = \{p_1,\ldots,p_n\}$ and $A'$, we get:
    \begin{align*}
    \frake_k(A) &= \sum_{\lambda \in \mathfrak{Y}(k)} \frac{(-1)^{k-\ell(\lambda)}}{z_\lambda}\,\frakp_{\lambda}(p_1,\ldots,p_n) ;\\
    b_{k,n}=\frake_k(A')&= \sum_{\substack{\lambda \in \mathfrak{Y}(k) \\ \lambda_{\ell(\lambda)}\geq 2}} \frac{(-1)^{k-\ell(\lambda)}}{z_\lambda}\,\frakp_{\lambda}(p_1,\ldots,p_n).
    \end{align*}
    In particular, 
    \begin{align*}
    b_{1,n}&=0\quad;\quad b_{2,n} = -\frac{1}{2}\,\frakp_2(p_1,\ldots,p_n)\quad;\quad b_{3,n} = \frac{1}{3}\,\frakp_3(p_1,\ldots,p_n) \\ 
    b_{4,n} &= \frac{(\frakp_2(p_1,\ldots,p_n))^2}{8} - \frac{\frakp_4(p_1,\ldots,p_n)}{4}.
    \end{align*}
    The removal of the integer partitions with parts of size $1$ which is performed when going from $\frake_k(A)$ to $\frake_k(A')$ also results from an inclusion-exclusion; hence, it is easy to see from the formul{\ae} above that for any $s \geq 1$,
    $$b_{s,n}
 = \sum_{t=0}^s \frac{(-1)^{t}}{t!}\,(\frake_1(p_1,\ldots,p_n))^{t}\,\frake_{s-t}(p_1,\ldots,p_n)=\sum_{t=0}^s \frac{(-1)^{t}}{t!}\,(\lambda_n)^{t}\,\frake_{s-t}(p_1,\ldots,p_n),$$
 with by convention $\frake_0(p_1,\ldots,p_n)=1$.
\end{itemize}

\begin{remark}
The expression of $b_{s,n}$ in terms of the elementary symmetric functions $\frake_t(p_1,\ldots,p_n)$ can be used to prove that each coefficient $b_{s,n}$ is a polynomial of total degree $s$ in the moments $M_{k,n}=\esper[(X_n)^k]$, with $M_{k,n}$ considered to be of degree $k$. Indeed,
$$\esper[(X_n)^k] = \sum_{l=1}^k l!\,\genfrac\{\}{0pt}{0}{k}{l}\,\frake_{l}(p_1,\ldots,p_n),$$
where $\genfrac\{\}{0pt}{1}{k}{l}$ is the Stirling number of the first kind, which counts set partitions of $\lle 1,k\rre$ in $l$ parts. The formula above can be inverted in order to express the elementary symmetric functions of the probabilities $p_i$ in terms of the moments of $X_n$. Therefore, the coefficients $b_{s,n}$ with $s \leq r$ and the approximation scheme of order $r \geq 1$ of the law of $X_n$ depends only on the $r$ first moments of $X_n$, and not on the individual probabilities $p_i$. For instance, 
\begin{align*}
b_{2,n} &= \frac{1}{2}\,(M_{2,n}-M_{1,n}-(M_{1,n})^2);\\ 
b_{3,n} &=\frac{1}{2}\,((M_{1,n})^2-M_{2,n}-M_{2,n}M_{1,n}) + \frac{1}{3}\,(M_{1,n} + (M_{1,n})^3 ) + \frac{1}{6}\,M_{3,n}.
\end{align*}
This property is important for simulations, if one wants to approximate the law of $X_n$ from a dataset instead of the list of the probabilities $p_i$. In a companion paper \cite{MNV22}, we investigate the applications of the theory developed in the present article to credit risk models. We refer to \cite[Remark 4.3 and Appendix D]{MNV22} for the formula of change of basis between the coefficients $b_{s,n}$ and the moments of $X_n$.
\end{remark}

Before going on, let us analyse the radius of convergence of the generating series that we have manipulated above.
\begin{proposition}\label{prop:cauchy_formula}
Let $\{p_1,\ldots,p_n\}$ be a set of probabilities in $(0,1)$, $\lambda_n=\sum_{i=1}^n p_i$, $X_n = \sum_{i=1}^n \mathrm{Be}(p_i)$, and 
$\Psi_n(w) = \esper[w^{X_n}]\,\E^{-\lambda_n(w-1)}$.
With our previous notations, $\psi_n(\xi)=\Psi_n(\E^{\I \xi})$. 
\begin{enumerate}
    \item If $b_{s,n}=\frake_s(\{p_1,\ldots,p_n\}')$, then we have 
    $$\Psi_n(w) = \frakE(\{p_1,\ldots,p_n\}',w-1) =1+\sum_{s=1}^\infty b_{s,n}\,(w-1)^s,$$
and the right-hand side converges on the whole complex plane; thus, $\Psi_n$ is an entire function.
\item More precisely, if $(\sigma_n)^2 = \sum_{i=1}^n (p_i)^2$, then for any $s \geq 2$,
$$|b_{s,n}|\leq \left(\frac{\E(\sigma_n)^2}{s}\right)^{\!\frac{s}{2}}.$$
\end{enumerate}
\end{proposition}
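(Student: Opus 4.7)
The proposition has two parts. For part (1), the plan is straightforward: the identity $\esper[w^{X_n}] = \prod_{i=1}^n (1 + p_i(w-1))$ exhibits $\esper[w^{X_n}]$ as a polynomial of degree $n$ in $w$ (since $X_n$ takes only finitely many integer values), so $\Psi_n(w) = \esper[w^{X_n}] \, \E^{-\lambda_n(w-1)}$ is the product of this polynomial with an entire function, hence itself entire. Its Taylor expansion at $w = 1$ in powers of $z = w - 1$ therefore converges on all of $\C$, and its coefficients must coincide with the $b_{s,n} = \frake_s(\{p_1, \ldots, p_n\}')$ already identified in Section~\ref{sub:sum_bernoulli}; this yields the series representation $\Psi_n(w) = \frakE(\{p_1, \ldots, p_n\}', w-1) = 1 + \sum_{s \geq 1} b_{s,n}\,(w-1)^s$.

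For part (2), I would apply Cauchy's integral formula on a circle of radius $R > 0$:
\[
b_{s,n} = \frac{1}{2\pi \I} \oint_{|z| = R} \frac{\Psi_n(1+z)}{z^{s+1}} \DD z,
\]
which immediately gives $|b_{s,n}| \leq R^{-s}\,\max_{|z| = R} |\Psi_n(1+z)|$. The crux of the argument is a uniform pointwise bound on each factor of $\Psi_n(1+z) = \prod_i (1 + p_i z)\,\E^{-p_i z}$, namely the inequality
\[
|1 + u|\,\E^{-\Re u} \leq \E^{|u|^2/2} \qquad \text{valid for all } u \in \C.
\]
This is proved by squaring: $|1 + u|^2 = 1 + 2\Re u + |u|^2 \leq \E^{2\Re u + |u|^2}$, where the second inequality is just $1 + x \leq \E^x$ applied to the real number $x = 2\Re u + |u|^2$. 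Substituting $u = p_i z$ and multiplying over $i = 1,\ldots,n$ yields, on the circle $|z| = R$,
\[
|\Psi_n(1+z)| \leq \prod_{i=1}^n \E^{p_i^2 R^2 / 2} = \E^{(\sigma_n)^2 R^2 / 2}.
\]

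Combining the two estimates, $|b_{s,n}| \leq R^{-s} \, \E^{(\sigma_n)^2 R^2 / 2}$ for every $R > 0$. Optimising by setting the logarithmic derivative in $R$ to zero gives the critical radius $R = \sqrt{s}/\sigma_n$, and substituting back yields exactly the advertised bound $(\E\,(\sigma_n)^2 / s)^{s/2}$. The main subtlety is locating the sharp envelope $|1 + u|\,\E^{-\Re u} \leq \E^{|u|^2/2}$: cruder bounds on $(1+u)\E^{-u}$ obtained by term-by-term estimation of its Taylor series would yield the same shape but with a larger constant in place of $\E$, so it is this clean inequality that makes the optimisation land on the constant $\E$ rather than a multiple of it.
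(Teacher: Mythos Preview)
Your proof is correct and follows the same architecture as the paper's: entireness from the explicit product form, then Cauchy's estimate on a circle of radius $R$ combined with the pointwise bound $|(1+u)\,\E^{-u}|\le \E^{|u|^2/2}$, and finally optimisation at $R=\sqrt{s}/\sigma_n$. Your derivation of the pointwise inequality via $|1+u|^2 = 1 + 2\Re u + |u|^2 \le \E^{2\Re u + |u|^2}$ (a single application of $1+x\le \E^x$) is in fact cleaner than the paper's argument, which fixes $\Re u$ and carries out a calculus case analysis in $\Im u$ to reach the same conclusion.
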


\begin{proof}
Since $\Psi_n(w) = \prod_{i=1}^n ((1+p_i(w-1))\,\E^{-p_i(w-1)})$ is a product of entire functions, it is indeed well defined and convergent on the whole complex plane, and its coefficients $b_{s,n}$ come from the previous computations. Notice that we have in particular $b_{1,n} = \frake_1(\{p_1,\ldots,p_n\}') = 0$. In order to prove the upper bound on the coefficients $b_{s \geq 2,n}$, we shall use the following elementary inequality:
$$\forall z \in \C,\,\,|(1+z)\,\E^{-z}| \leq \E^{\frac{|z|^2}{2}}.$$ 
Indeed, if $z=x+\I y$, let us fix $x \in \R$ and study as a function of $y$
$$\frac{|(1+z)\,\E^{-z}|^2}{\E^{|z^2|}} = ((1+x)^2 + y^2)\,\E^{-2x - x^2-y^2}.$$
The derivative with respect to $y$ of this function vanishes if and only if $y=0$ or $1 = (1+x)^2 + y^2$. If $x \in (-2,0)$, then the two maxima of the function are equal to $1$, whereas if $x\leq -2$ or $x\geq 0$, then the unique maximum is attained at $y=0$ and is equal to $(1+2x+x^2)\, \E^{-2x-x^2} \leq \E^{2x+x^2-2x-x^2}=1$. \medskip

Now, by using the Cauchy formula with a circle of radius $R = \frac{\sqrt{s}}{\sigma_n}$, we get:
\begin{align*}
|b_{s,n}| &= \left|\frac{1}{2\I\pi} \oint \prod_{i =1}^n\big((1+p_iz)\,\E^{-p_iz}\big) \,\frac{\!\DD{z}}{z^{s+1}}\right| \leq \frac{\E^{\frac{(\sigma_n)^2R^2}{2}}}{R^s} = \left(\frac{\E(\sigma_n)^2}{s}\right)^{\!\frac{s}{2}}.
\end{align*}
This fast decay of the coefficients $b_{s,n}$ will turn out to be an essential tool in the proofs of our estimates.
\end{proof}
\medskip

\subsection{Models with Bernoulli asymptotics}\label{sub:bernoulli_like_residue}
Consider more generally a countable family of real numbers $A=\{a_1,a_2,\ldots\}$ with $\sigma^2=\sum_{i\geq 1}(a_i)^2 < +\infty$. Then, the result from Proposition \ref{prop:cauchy_formula} extends readily to
\begin{equation}
\frakE(A',w-1) = \prod_{i \geq 1} (1+a_i(w-1))\,\E^{-a_i(w-1)} = 1+\sum_{s=2}^\infty\frake_s(A')\,(w-1)^s.\label{eq:bernoulli_like}
\end{equation}
Thus, the function $\frakE(A',\cdot)$ is an entire function on $\C$, and its coefficients $\frake_s(A')$ satisfy the inequality:
$$\forall s \geq 2,\,\,|\frake_s(A')| \leq \left(\frac{\E\sigma^2}{s}\right)^{\!\frac{s}{2}}.$$
This can be seen by taking the limit of the corresponding result for the truncated finite alphabets $A_n=\{a_1,a_2,\ldots,a_n\}$. 
\begin{definition}
Let $(X_n)_{n \in \N}$ be a sequence of random variables with values in $\N$. We say that the sequence converges mod-Poisson with parameters $(\lambda_n)_{n \in \N}$ and with \emph{Bernoulli asymptotics} if there exists a square-summable family $A=\{a_1,a_2,\ldots\}$ such that:
$$\esper[w^{X_n}]\,\E^{-\lambda_n(w-1)} \to_{n \to \infty} \frakE(A',w-1),$$
where $\frakE(A',\cdot)$ is the entire function defined by Equation \eqref{eq:bernoulli_like}, and where the convergence happens locally uniformly on a disc of radius $r>1$.
\end{definition}

In \cite[Section 4, Table 4.1]{CDMN20}, a list of examples of mod-Poisson models with Bernoulli asymptotics is provided; the computations of the corresponding generating series rely on relatively elementary combinatorial or algebraic arguments. The remainder of this Subsection is devoted to recalling these important examples; the proofs of the corresponding mod-Poisson convergence results will be revisited in Section \ref{sec:hankel} in order to get explicit estimates of the remainders.

\subsubsection{Number of cycles of a random permutation}
Let $(\theta_{k \geq 1})_{k \geq 1}$ be a sequence of positive parameters, and let $\Theta$ be the specialisation of the algebra $\Sym$ defined by $\frakp_k(\Theta)=\theta_k$ for any $k \geq 1$. We also introduce the \emph{homogeneous symmetric functions} 
$$\frakh_k = \sum_{1\leq i_1\leq i_2 \leq \cdots \leq i_k} x_{i_1}x_{i_2}\cdots x_{i_k};$$
they form another algebraic basis of $\Sym$, and they differ from the elementary symmetric functions by allowing equalities between the indices $i_1,\ldots,i_k$. The generating series of the homogeneous symmetric functions $\frakH(z) = 1+\sum_{k=1}^\infty \frakh_k\,z^k$ is related to the generating series $\frakP(z)$ by:
$$\frakH(z) = \prod_{i=1}^\infty \frac{1}{1-x_iz}=\exp(-\sum_{i=1}^\infty \log(1-x_iz))=\exp(\sum_{k=1}^\infty \sum_{i=1}^\infty \frac{(x_iz)^k}{k})=\exp(\frakP(z)).$$
By expanding the exponential, we get an expression of the symmetric function $\frakh_k$ in terms of the power sums, which is very similar to what we saw previously with elementary symmetric functions:
$$\frakh_k = \sum_{\lambda \in \mathfrak{Y}(k)}\frac{1}{z_\lambda}\,\frakp_\lambda.$$
Let $\proba_{n,(\theta_{k \geq 1})_{k \geq 1}}$ be the probability measure on the symmetric group $\sym(n)$ which gives to a permutation $\sigma$ with $m_1(\sigma)$ cycles of length $1$, $m_2(\sigma)$ cycles of length $2$, etc. a probability proportional to $\prod_{k \geq 1} (\theta_k)^{m_k(\sigma)}$:
$$\proba_{n,(\theta_k)_{k \geq 1}}[\sigma] = \frac{1}{Z_{n,(\theta_k)_{k \geq 1}}} \,\prod_{k \geq 1} (\theta_k)^{m_k(\sigma)}.$$
The normalisation constant is easy to compute. Indeed, 
$$Z_{n,(\theta_k)_{k \geq 1}} = \sum_{\sigma \in \sym(n)}(\theta_k)^{m_k(\sigma)} = \sum_{\lambda \in \mathfrak{Y}(n)} \frac{n!}{z_\lambda}\,\frakp_\lambda(\Theta) = n!\,\frakh_n(\Theta).$$
These \emph{weighted measures} $\proba_{n,(\theta_k)_{k \geq 1}}$ have been studied in \cite{BU09,BU11,BUV11,EU12,NZ13}, in connection with models of spatial random permutations. If $(\theta_k)_{k \geq 1}$ is the constant sequence equal to $\theta >0$, then we recover the \emph{Ewens measure} with parameter $\theta$, for which $\frakh_n(\Theta) = \prod_{i=1}^n (1+\frac{\theta-1}{i})$.
Consider now the random variable 
$$C_n = \big(\text{number of disjoint cycles of }\sigma_n \sim \proba_{n,(\theta_k)_{k \geq 1}}\big).$$
Its generating series is:
\[\esper[w^{C_n}] = \frac{1}{n!\,\frakh_n(\Theta)} \sum_{\sigma \in \sym(n)} \left(\prod_{k \geq 1}(w\theta_k)^{m_k(\sigma)}\right) = \frac{\frakh_n(w\Theta)}{\frakh_n(\Theta)},\label{eq:gen_cycles}\tag{genC}\]
where $w\Theta$ denotes the specialisation of $\Sym$ given by $\frakp_{k \geq 1}(w\Theta) = w\theta_k$.
\begin{proposition}\label{prop:random_permutations}
Suppose that the sequence of parameters $(\theta_k)_{k \geq 1}$ yields a specialisation $\Theta$ of $\Sym$ such that
$$\frakP(\Theta,z) = \sum_{k=1}^\infty \frac{\theta_k}{k}\,z^k $$
has the following properties:
\begin{enumerate}
    \item The generating series $\frakP(\Theta,z)$ is holomorphic on a domain 
    $$\Delta(m,M,\phi)=\{z\in \C\,\text{ such that }|z|< M,\,\,z\neq m,\,\,|\arg(z-m)|>\phi\}$$
     with $0<m<M$, $0<\phi<\frac{\pi}{2}$.
     \begin{center}
\begin{tikzpicture}[scale=0.8]
\fill [white!80!blue,shift={(2,0)}]  (-30:2) -- (0,0) -- (30:2) arc (15:345:3.864) -- (-30:2);
\draw [->] (-5,0) -- (5,0);
\draw [->] (0,-5) -- (0,5);
\draw [thick,blue,shift={(2,0)}]  (-30:2) -- (0,0) -- (30:2) arc (15:345:3.864) -- (-30:2);
\draw [<->,shift={(2,0)}] (1:1.3) arc (1:29:1.3);
\draw [shift = {(2,0)}] (15:1.7) node {$\phi$};
\draw [<->,shift={(0,-0.3)}] (0.05,0) -- (1.95,0);
\draw [dashed] (2,0) -- (2,-0.4);
\draw (1,-0.6) node {$m$};
\draw [<->] (45:0.1) -- (45:3.8);
\draw (53:2.5) node {$M$};
\end{tikzpicture}
\end{center}
     \item Around $z=m$, $\frakP(\Theta,z)$ has a logarithmic singularity, and
     $$\frakP(\Theta,z) = \theta\,\log(\frac{1}{1-\frac{z}{m}}) + K + O(|z-m|)$$
     with $\theta>0$.
\end{enumerate}
We set $$\gamma_\theta = \sum_{n=1}^\infty \frac{\theta}{n+\theta-1} - \theta \log(1+\frac{1}{n});$$ notice that $\gamma_1=\gamma$ is the Euler--Mascheroni constant.
Then, the sequence of numbers of disjoint cycles $(C_n)_{n \in \N}$ converges mod-Poisson with parameters $\lambda_n=\theta\,\log n + K+\gamma_\theta$ and Bernoulli asymptotics. The limiting alphabet $A_\theta$ is 
$$A_\theta = \left\{1,\frac{\theta}{\theta+1},\frac{\theta}{\theta+2},\ldots\right\}$$
Moreover, the convergence happens locally uniformly on $\C$ at speed $O(n^{-1})$:
$$\esper[w^{C_n}]\,\E^{-(\theta \log n + K+\gamma_\theta)(w-1)} =\frakE(A_\theta',w-1) + O(n^{-1}).$$
\end{proposition}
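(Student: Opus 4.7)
The plan is to analyse $\esper[w^{C_n}]$ from Equation~\eqref{eq:gen_cycles} via singularity analysis of a single analytic family, and then to identify the limit with $\frakE(A_\theta',w-1)$ using the Weierstrass product for $\Gamma$. Starting from $\frakH(z)=\exp(\frakP(z))$ and the linearity $\frakP(w\Theta,z) = w\,\frakP(\Theta,z)$, which follows from the definition of the specialisation $w\Theta$, I would write
$$\esper[w^{C_n}] = \frac{[z^n]\,G_w(z)}{[z^n]\,G_1(z)},\qquad G_w(z):=\exp(w\,\frakP(\Theta,z)).$$
By hypothesis (1), each $G_w$ is holomorphic on $\Delta(m,M,\phi)$; by hypothesis (2), near the singularity $z=m$ one has
$$G_w(z) = \E^{wK}\,(1-z/m)^{-w\theta}\,\bigl(1+O(|z-m|)\bigr),$$
with an implicit constant uniform for $w$ in any fixed compact subset of $\C$.

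Then I would apply a refined Flajolet--Odlyzko transfer to $G_w$ on a Hankel contour inside $\Delta(m,M,\phi)$ pinched at $z=m$. The leading singular factor contributes $\E^{wK}\,m^{-n}\,n^{w\theta-1}/\Gamma(w\theta)$; the $O(|z-m|)$ correction gives a relative $O(n^{-1})$ via the coefficients of $(1-z/m)^{1-w\theta}$; and the outer arc, taken at a radius strictly between $m$ and $M$, contributes an exponentially negligible term. Altogether this yields, locally uniformly in $w$ on a disc of radius $r>1$,
$$[z^n]\,G_w(z) = \frac{\E^{wK}\,n^{w\theta-1}}{m^n\,\Gamma(w\theta)}\,\bigl(1+O(n^{-1})\bigr).$$
Taking the ratio and multiplying by $\E^{-\lambda_n(w-1)}$ with $\lambda_n=\theta\log n + K + \gamma_\theta$ cancels the factors $m^{-n}$, $n^{(w-1)\theta}$ and $\E^{(w-1)K}$, leaving
$$\esper[w^{C_n}]\,\E^{-\lambda_n(w-1)} = \E^{-\gamma_\theta(w-1)}\,\frac{\Gamma(\theta)}{\Gamma(w\theta)}\,\bigl(1+O(n^{-1})\bigr).$$

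Finally I would recognise $\E^{-\gamma_\theta(w-1)}\,\Gamma(\theta)/\Gamma(w\theta)$ as $\frakE(A_\theta',w-1)$. Using the Weierstrass product $1/\Gamma(z)=z\,\E^{\gamma z}\prod_{n\geq 1}(1+z/n)\,\E^{-z/n}$ and the definition of the virtual alphabet $A_\theta'$, both expressions factor as $w$ times convergent infinite products whose $n$-th factor has the form $\frac{n+w\theta}{n+\theta}\,\E^{-c_n(w-1)}$, with $c_n = \theta/n$ on the $\Gamma$-side and $c_n = \theta/(n+\theta)$ on the $\frakE$-side. The identity therefore reduces to the scalar identity
$$\gamma_\theta = 1+\gamma\theta - \theta\sum_{n=1}^\infty\Bigl(\frac{1}{n}-\frac{1}{n+\theta}\Bigr),$$
which is verified by comparing partial sums of the defining series of $\gamma_\theta$ to $\gamma = \lim_N(\sum_{n=1}^N 1/n - \log N)$. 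I expect the main technical obstacle to be the transfer theorem step: the standard Flajolet--Odlyzko theorem only provides an asymptotic equivalent, whereas here the $O(n^{-1})$ remainder must be uniform in $w$ over a complex disc of radius strictly greater than $1$. The Hankel-contour argument must therefore be carried out with constants tracked explicitly as functions of $w$, which is precisely the refinement developed in Section~\ref{sec:hankel}.
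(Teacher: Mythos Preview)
Your proposal is correct and follows essentially the same route as the paper: write the bivariate generating series as $\exp(w\,\frakP(\Theta,z))$, extract coefficients via a Hankel contour with uniform-in-$w$ constants (this is precisely Theorem~\ref{thm:hankel_estimate}), and then identify $\E^{-\gamma_\theta(w-1)}\,\Gamma(\theta)/\Gamma(\theta w)$ with $\frakE(A_\theta',w-1)$ through the infinite product for $\Gamma$. The only cosmetic difference is that you start from the Weierstrass product and reduce to the scalar identity for $\gamma_\theta$, whereas the paper writes the product $\prod_{n\geq 1}(1+\frac{\theta}{n+\theta-1}(w-1))(1+\frac{1}{n})^{-\theta(w-1)}$ directly; also note that since $\rho$ can be taken arbitrarily large in the cycles case, your disc-uniformity automatically upgrades to the stated local uniformity on $\C$.
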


A proof of this result is given in \cite[Lemma 4.1]{NZ13}; we shall see in Section \ref{sec:hankel} that it is a particular case of transfer results for generating series with algebraico-logarithmic singularities (see \cite{FO90,Hwa99} and the discussion of \cite[Section 4.4]{CDMN20}). Let us remark that the form asked for the singularity of $\frakP(\Theta,z)$ is inspired by the case of Ewens measures: indeed, we then have $\frakP(\Theta,z) = \theta \log(\frac{1}{1-z})$ and $K=0$.

\subsubsection{Number of irreducible factors of a random polynomial}
Let $q=p^e$ be a prime power, and $f_n$ be a random polynomial chosen uniformly among the $q^n$ monic polynomials with degree $n$ in $\For_q[X]$. We denote
$$D_n =D(f_n)= \big(\text{number of distinct irreducible divisors of }f_n\big),$$
the irreducible factors being counted without multiplicity. If $\Irr(\For_q)$ denotes the set of all irreducible monic polynomials in $\For_q[X]$, then we have the following identities of generating series:
\begin{align*}
\sum_{n=0}^\infty (qz)^n \,\esper[w^{D_n}] &= \sum_{\substack{P \in \For_q[X]\\ P \text{ monic}}} w^{D(P)}\,z^{\deg P} = \prod_{P \in \Irr(\For_q)} (1+wz^{\deg P} + wz^{2\deg P} + \cdots) \\ 
&=\prod_{P \in \Irr(\For_q)} \left(1+w\,\frac{z^{\deg P}}{1-z^{\deg P}}\right)=\prod_{P \in \Irr(\For_q)} \frac{1-(1-w)z^{\deg P}}{1-z^{\deg P}} \\ 
&=\exp(\sum_{P \in \Irr(\For_q)} \log(1-(1-w)z^{\deg P}) - \log(1-z^{\deg P})) \\ 
&= \exp(\sum_{m=1}^\infty \sum_{k=1}^\infty \frac{ I_{q}(m)\,z^{km}(1-(1-w)^k)}{k}) \\ 
&= \exp(\sum_{n=1}^\infty \frac{z^n}{n} \sum_{k\mathrel{|}n} \frac{n}{k}\, I_{q}\!\left(\frac{n}{k}\right)\,(1-(1-w)^k))\\
&= \exp(\sum_{n=1}^\infty \frac{z^n}{n}\,(\widetilde{I}_q * J_w)(n))
\end{align*}
where $I_q(n) = \card\{P \in \Irr(\For_q)\,|\,\deg P =n\}$, $\widetilde{I}_q(n)=n\,I_q(n)$, $J_w(n) = 1-(1-w)^n$, and $*$ denotes the operation of convolution on arithmetic functions:
$$\forall n \geq 1,\,\,\,(f*g)(n) =\sum_{k\mathrel{|}n} f(k)\,g\!\left(\frac{n}{k}\right).$$
The number of irreducible polynomials with a given degree is well known to be given by Gauss' formula:
$$\widetilde{I}_q(n) = (\mu * q^\bullet)(n) ,$$
where $\mu$ is the arithmetic Möbius inversion function. In particular, $I_q(n) \leq \frac{q^n}{n}$. Then,
$$(\widetilde{I}_q * J_w)(n) = (\mu* q^\bullet * (1-(1-w)^\bullet))(n).$$
In particular, if $w=1$, then $(\widetilde{I}_q * J_1)(n) = (\mu*1*q^{\bullet})(n) = (q^\bullet)(n)=q^n$, and $\exp(\sum_{n=1}^\infty \frac{(qz)^n}{n}) = \frac{1}{1-qz} = \sum_{n=0}^\infty (qz)^n$. Therefore, if we define a specialisation of $\mathrm{Sym}$ by setting
$$\frakp_k(B_{q,w}) = (\mu* q^\bullet * (1-(1-w)^\bullet))(k)$$
for any $k \geq 1$, then we obtain an expression of the generating series $\esper[w^{D_n}]$ similar to the one of the previous paragraph (Equation \eqref{eq:gen_cycles}):
\[
    \esper[w^{D_n}] =\frac{\frakh_n(B_{q,w})}{\frakh_n(B_{q,1})}.\label{eq:gen_divisors}\tag{genD}
\]
The analysis of the bivariate generating series leads to the following result of mod-Poisson convergence (see \cite[Example 4.5 and Theorem 4.6]{CDMN20}), which is a function field analogue of the Erd\H{o}s--Kac asymptotics of the sequence $(\omega_n)_{n \in \N}$.
\begin{proposition}\label{prop:random_polynomials}
We fix a prime power $q=p^e$ and we denote $$R_q=\sum_{k=2}^\infty \frac{\mu(k)}{k}\log(\frac{1}{1-q^{1-k}}).$$ The sequence of numbers of distinct irreducible factors $(D_n)_{n \in \N}$ converges mod-Poisson with parameters $\lambda_n=\log n + R_q+\gamma$ and Bernoulli asymptotics. The limiting alphabet $A_q$ is 
$$A_q=\left\{1,\frac{1}{2},\frac{1}{3},\ldots\right\}\sqcup \left\{\frac{1}{q^{\deg P}},\,\,P\in \Irr(\For_q)\right\}.$$
Moreover, the convergence happens locally uniformly on the disc with radius $q$ at speed $O(n^{-1})$:
$$\esper[w^{D_n}]\,\E^{-(\log n + R_q+\gamma)(w-1)} =\frakE(A_q',w-1)\, + O(n^{-1}).$$
\end{proposition}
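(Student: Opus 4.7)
The plan is to apply the Flajolet--Odlyzko singularity-analysis machinery (revisited in Section~\ref{sec:hankel}) to the bivariate generating series
\[
G(z,w) = \sum_{n\geq 0} q^n z^n\,\esper[w^{D_n}] = \frac{1}{1-qz}\,\prod_{P\in \Irr(\For_q)}(1-(1-w)z^{\deg P}),
\]
viewed as a function of $z$ with parameter $w$. Its dominant singularity sits at $z=1/q$, and the proof reduces to identifying the precise type of that singularity uniformly in $w$.

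To isolate the singularity, set $S(z)=\sum_{P\in \Irr(\For_q)}z^{\deg P}$. The Euler factorisation of the zeta function of $\For_q[X]$ gives $\log\tfrac{1}{1-qz}=\sum_{k\geq 1}\tfrac{S(z^k)}{k}$, whence Möbius inversion yields $S(z)=\log\tfrac{1}{1-qz}+T(z)$ with $T(z)=\sum_{k\geq 2}\tfrac{\mu(k)}{k}\log\tfrac{1}{1-qz^k}$ holomorphic on $|z|<q^{-1/2}$ and $T(1/q)=R_q$. Taking the logarithm of the Euler product of $G(\cdot,w)$ and inserting this decomposition produces the factorisation
\[
G(z,w) = (1-qz)^{-w}\,\exp(\ell(z,w)),\qquad \ell(z,w)=(w-1)\,T(z)-\sum_{k\geq 2}\frac{(1-w)^k}{k}\,S(z^k),
\]
in which $\ell(\cdot,w)$ is analytic in $z$ on a neighbourhood of $|z|\leq 1/q$ and holomorphic in $w$ on the target disc.

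I would then invoke the Hankel-contour transfer of Section~\ref{sec:hankel}: since $\exp(\ell(\cdot,w))$ extends to a $\Delta$-domain at $z=1/q$, one gets $[z^n]G(z,w)=\tfrac{q^n\,n^{w-1}}{\Gamma(w)}\,\exp(\ell(1/q,w))\,(1+O(n^{-1}))$ uniformly in $w$ on compacts. Dividing by $q^n$ and multiplying by $\E^{-\lambda_n(w-1)}=n^{-(w-1)}\E^{-(w-1)(R_q+\gamma)}$ cancels the $n^{w-1}$ factor and yields
\[
\esper[w^{D_n}]\,\E^{-\lambda_n(w-1)} = \frac{\E^{-\gamma(w-1)}}{\Gamma(w)}\,\exp(\ell(1/q,w)-(w-1)R_q)\,(1+O(n^{-1})).
\]
Expanding $\log(1+x)-x=\sum_{k\geq 2}(-1)^{k+1}x^k/k$ and summing over primes shows $\exp(\ell(1/q,w)-(w-1)R_q)=\prod_{P}(1+(w-1)/q^{\deg P})\,\E^{-(w-1)/q^{\deg P}}$, while the Weierstrass formula $\E^{-\gamma(w-1)}/\Gamma(w)=\prod_{n\geq 1}(1+(w-1)/n)\,\E^{-(w-1)/n}$ produces the $\{1,\tfrac 12,\tfrac 13,\ldots\}$ part of the alphabet; multiplying the two recovers $\frakE(A_q',w-1)$ by the very definition of the virtual alphabet.

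The hard part will be the $\Delta$-domain extension of $\exp(\ell(\cdot,w))$ beyond $|z|=1/q$ with bounds uniform for $w$ in compact subsets of the disc of radius $q$, together with uniform control of the remainder in the transfer theorem. This requires choosing a Hankel contour independent of $w$ and bounding the tail $\sum_{k\geq 2}(1-w)^k\,S(z^k)/k$ near the singularity, which is exactly the technical work performed in Section~\ref{sec:hankel}; the same machinery also handles Proposition~\ref{prop:random_permutations} as a parallel case.
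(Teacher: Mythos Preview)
Your proposal is correct and follows essentially the same route as the paper's proof in Section~\ref{sec:hankel}: your $S(z)$ and $T(z)$ are the paper's $\mathfrak{I}_q(z)$ and $\mathfrak{R}_q(z)$, your factorisation $G(z,w)=(1-qz)^{-w}\exp(\ell(z,w))$ is exactly the paper's rewriting of $F_D(z,w)$, and the identification of the limit via the Weierstrass product for $1/\Gamma(w)$ together with the Euler product over $\Irr(\For_q)$ is the same as the paper's recognition of $\frakE(A_q',w-1)$. The only point to watch is the range of $w$: the paper's explicit verification of the hypotheses \ref{item:hankel_1}--\ref{item:hankel_2} takes $\rho=\tfrac{5}{4}$ and $M=q^{-3/4}$ (and in fact restricts to $q\geq 3$ for those particular constants), so getting local uniformity on all of $D(0,q)$ requires adjusting $M$ closer to $q^{-1}$ as $|w|$ grows, which you correctly flag as the technical work.
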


\subsubsection{Number of prime divisors of a random integer}
The discussion of Subsection \ref{sub:erdos_kac} can be put in the framework of mod-Poisson convergent sequences with Bernoulli asymptotics. Indeed, the complex version of Equation \eqref{eq:selberg_delange} is valid, so we get:
\begin{proposition}
 The sequence of numbers of distinct prime divisors $(\omega_n)_{n \in \N}$ converges mod-Poisson with parameters $\lambda_n=\log \log n +\gamma$ and Bernoulli asymptotics. The limiting alphabet $A_\omega$ is 
$$A_\omega=\left\{1,\frac{1}{2},\frac{1}{3},\ldots\right\}\sqcup \left\{\frac{1}{p},\,\,p \in \mathbb{P}\right\}.$$
Moreover, the convergence happens locally uniformly on the complex plane at speed $O((\log n)^{-1})$:
$$\esper[w^{\omega_n}] \,\E^{-(\log \log n +\gamma)(w-1)} = \frakE(A_\omega',w-1) + O((\log n)^{-1}).$$
 \end{proposition}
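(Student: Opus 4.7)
The approach mirrors Propositions \ref{prop:random_permutations} and \ref{prop:random_polynomials}, with power series replaced by Dirichlet series and the Flajolet--Odlyzko transfer by the Selberg--Delange method. By multiplicativity of $k \mapsto w^{\omega(k)}$, the generating Dirichlet series factors as an Euler product:
$$F(w,s) := \sum_{k \geq 1}\frac{w^{\omega(k)}}{k^s} = \prod_p \left(1 + \frac{w}{p^s-1}\right) = \zeta(s)^w\,G(w,s), \quad G(w,s) = \prod_p \left(1+\frac{w}{p^s-1}\right)(1-p^{-s})^w.$$
A Taylor expansion in $p^{-s}$ shows that each local factor of $G$ equals $1 + O_w(p^{-2s})$, so the product defining $G$ converges on $\{\Re(s) > 1/2\}$, uniformly for $w$ in any fixed compact of $\C$. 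Consequently $F(w,s)$ has an algebraico-logarithmic singularity of exponent $w$ at $s=1$ and is holomorphic on a slit neighborhood of $[1,\infty)$ contained in the classical de la Vallée Poussin zero-free region of $\zeta$.

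Next, Perron summation followed by deformation of the contour into a Hankel contour around $s = 1$ (the argument paralleling the one to be carried out in Section \ref{sec:hankel} for the function-field analogue) yields
$$\frac{1}{n}\sum_{k \leq n}w^{\omega(k)} = \frac{G(w,1)}{\Gamma(w)}\,(\log n)^{w-1}\,\bigl(1 + O((\log n)^{-1})\bigr),$$
uniformly for $w$ in a compact subset of $\C$. Dividing both sides by the Poisson Fourier factor $\E^{(\log\log n+\gamma)(w-1)} = \E^{\gamma(w-1)}\,(\log n)^{w-1}$ gives the desired mod-Poisson asymptotics with residue
$$\Psi(w) = \frac{G(w,1)\,\E^{-\gamma(w-1)}}{\Gamma(w)}$$
and error $O((\log n)^{-1})$.

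It only remains to identify $\Psi(w)$ with $\frakE(A_\omega', w-1)$. Setting $z = w - 1$ and using the Weierstrass product
$$\frac{1}{\Gamma(1+z)} = \E^{\gamma z}\prod_{n \geq 1}(1 + z/n)\E^{-z/n},$$
the Gamma factor together with the exponential $\E^{-\gamma z}$ produces exactly the subproduct of $\frakE(A_\omega',z)$ indexed by $\N^*$, while inserting $\E^{\pm z/p}$ into each prime-indexed Euler factor of $G(w,1) = \prod_p (1+z/p)(1-1/p)^z$ rearranges it into the subproduct indexed by primes (the convergent exponential correction being handled through Mertens' theorem on $\sum_p[1/p+\log(1-1/p)]$). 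I expect the main technical obstacle to be the uniform Hankel-contour estimate in the middle step: one needs to truncate the Perron integral at a height $T = (\log n)^A$ and use the classical convexity bounds $|\zeta(s)|\ll \log(|\Im s|+2)$ inside the zero-free region to control the horizontal and vertical pieces uniformly in $w$. The final algebraic identification of the residue is then a routine rearrangement of infinite products once $z = w-1$ is introduced.
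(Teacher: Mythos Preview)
Your plan is precisely the one the paper indicates: it offers no proof beyond citing the Tauberian/Selberg--Delange method for the $L$-series $\sum_n w^{\omega(n)}\,n^{-s}$ and pointing to Remark~\ref{rem:next}, which outlines exactly the factorisation $F(s,w)=\zeta(s)^w\,G(s,w)$, the Perron formula, and the Hankel-contour deformation you describe.

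There is, however, a genuine gap in your final identification step. When you insert $\E^{\pm z/p}$ into $G(w,1)=\prod_p(1+z/p)(1-1/p)^z$, the leftover correction is
\[
\prod_p \E^{z/p}(1-1/p)^z \;=\; \exp\!\Big(z\sum_p\big[\tfrac{1}{p}+\log(1-\tfrac{1}{p})\big]\Big) \;=\; \E^{(M-\gamma)z},
\]
where $M\approx 0.2615$ is the Meissel--Mertens constant; ``handled through Mertens' theorem'' cannot mean that this equals $1$. Your computation therefore actually yields
\[
\Psi(w)=\frac{G(w,1)\,\E^{-\gamma z}}{\Gamma(w)} \;=\; \E^{(M-\gamma)z}\,\frakE(A_\omega',z),
\]
so with the parameter $\log\log n + \gamma$ the limiting residue has $b_1=M-\gamma\neq 0$ and is \emph{not} of the form $\frakE(A_\omega',\cdot)$. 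An independent sanity check confirms this: $\esper[\omega_n]=\frac{1}{n}\sum_{k\leq n}\omega(k)=\log\log n + M + O((\log n)^{-1})$, and since $b_1=\lim_n(\esper[\omega_n]-\lambda_n)$ must vanish for a residue of Bernoulli type, one is forced to take $\lambda_n=\log\log n + M$. In short, your argument is sound and matches the paper's, but carried out carefully it proves the proposition with $M$ in place of $\gamma$; the discrepancy lies in the stated constant, not in your method.
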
 
\noindent This mod-Poisson convergence is a consequence of a Tauberian theorem which yields estimates of the sum $\sum_{k=1}^n w^{\omega(k)}$, by using properties of the $L$-series
\begin{align*}
\sum_{n = 1}^\infty \frac{w^{\omega(n)}}{n^s} &= \prod_{p \in \mathbb{P}} \left(1+\frac{w}{p^s}+\frac{w}{p^{2s}}+\cdots\right)  = \prod_{p \in \mathbb{P}} \left(1+\frac{w}{p^s-1}\right);
\end{align*}
see our Remark \ref{rem:next} at the end of the article.
\medskip

\subsection{Estimates of the total variation distance}
We are now ready to state our main theorems. The first result estimates the quality of the approximation scheme of order $r$ of a integer-valued random variable $X$ whose deconvolution residue $\esper[\E^{\I \xi X}]\,\E^{-\lambda(\E^{\I \xi}-1)}$ can be expanded on the torus as a power series in $z=\E^{\I\xi}-1$, with the coefficients of the series that satisfy the same kind of inequality as in Proposition \ref{prop:cauchy_formula}.

\begin{main}\label{main:A}
Let $X$ be a integer-valued random variable such that
$$\psi(\xi) = \esper[\E^{\I \xi X}]\,\E^{-\lambda(\E^{\I \xi}-1)} = 1+\sum_{s=2}^\infty b_{s}\,(\E^{\I\xi}-1)^s\qquad;\qquad |b_{s}| \leq \left(\frac{\tau}{\sqrt{s}}\right)^{s}$$
for some positive parameters $\lambda$ and $\tau$. We denote $\nu^{(r)}$ the approximation scheme of order $r\geq 1$ of the distribution $\mu$ of $X$:
$$\widehat{\nu}(\xi)\, \E^{-\lambda(\E^{\I \xi}-1)}=1+\sum_{s=2}^r b_s\,(\E^{\I \xi}-1)^s.$$
Then, there exists two universal constants $C$ and $D$ such that, if $\eps = \frac{D\tau}{\sqrt{\lambda}}<1$, then
$$\dtv(\mu,\nu^{(r)}) \leq C\,\eps^{r+1} .$$
If $D=4$, then one can take $C\leq 570$.
\end{main}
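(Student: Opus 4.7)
The plan is to exploit the Fourier-analytic structure: the difference $\mu - \nu^{(r)}$ has a Fourier transform that is directly controlled by the tail coefficients $b_{s\geq r+1}$. First I would observe
\[
\widehat{\mu - \nu^{(r)}}(\xi) = \E^{\lambda(\E^{\I\xi}-1)}\sum_{s=r+1}^\infty b_s\,(\E^{\I\xi}-1)^s,
\]
which makes sense on $\Tor$ because $|\E^{\I\xi}-1|\leq 2$ and the hypothesis $|b_s|\leq (\tau/\sqrt{s})^s$ ensures absolute convergence (in fact $\psi$ extends to an entire function in the variable $z=\E^{\I\xi}-1$). Fourier inversion then gives
\[
\mu(k)-\nu^{(r)}(k) = \sum_{s=r+1}^\infty b_s\,h_s(k,\lambda),\qquad h_s(k,\lambda):= \frac{1}{2\pi}\int_{-\pi}^{\pi}\E^{\lambda(\E^{\I\xi}-1)}(\E^{\I\xi}-1)^s\,\E^{-\I k\xi}\,d\xi,
\]
and the triangle inequality reduces the problem to an estimate of $|b_s|\cdot\|h_s(\cdot,\lambda)\|_{\ell^1(\N)}$ for each $s\geq r+1$.

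The crux is to bound $\|h_s(\cdot,\lambda)\|_{\ell^1}$ by $(Cs/\lambda)^{s/2}$ up to a mild polynomial factor in $s$. There are two natural routes. The slickest uses Poisson--Charlier polynomials: writing $P_\lambda(k) = \E^{-\lambda}\lambda^k/k!$ and expanding $(\E^{\I\xi}-1)^s$ one checks directly that
\[
h_s(k,\lambda) = P_\lambda(k)\,(-1)^s c_s(k;\lambda),\qquad c_s(k;\lambda) := \sum_{j=0}^s \binom{s}{j}(-1)^j (k)_j\,\lambda^{-j},
\]
where $c_s(\cdot;\lambda)$ is the degree-$s$ Charlier polynomial. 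Applying Cauchy--Schwarz against the weight $P_\lambda$ and using the orthogonality $\sum_k P_\lambda(k)\,c_s(k;\lambda)^2 = s!/\lambda^s$ yields
\[
\|h_s(\cdot,\lambda)\|_{\ell^1}\leq \sqrt{s!/\lambda^s}.
\]
The second, heavier route, which seems to be the one announced by Sections \ref{sec:fourier_inversion}--\ref{sec:riemann_sum}, is to scale $\xi = t/\sqrt{\lambda}$ and $k = \lambda+u\sqrt{\lambda}$, expand $\lambda(\E^{\I t/\sqrt{\lambda}}-1)$ around its Gaussian quadratic approximation $\I t\sqrt{\lambda}-t^2/2$, and recognise $\sqrt{\lambda}\,\lambda^{s/2}\,h_s(\lambda+u\sqrt{\lambda},\lambda)$ as a perturbation of the Hermite-type function $\I^s\,\E^{-u^2/2}\,H_s(u)/\sqrt{2\pi}$. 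The $\ell^1$-norm on the $k$-lattice becomes, through a Riemann-sum argument with step $1/\sqrt{\lambda}$, essentially $\lambda^{-s/2}$ times the $L^1$-norm of the $s$-th Hermite function, which is classically of order $s^{s/2}\E^{-s/2}$ up to polynomial factors.

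Combining either bound with $|b_s|\leq (\tau/\sqrt{s})^s$ and Stirling's formula,
\[
|b_s|\,\|h_s(\cdot,\lambda)\|_{\ell^1}\leq \frac{\tau^s}{s^{s/2}}\sqrt{s!/\lambda^s}\leq C_1\,s^{1/4}\,\left(\frac{\tau}{\sqrt{\E\lambda}}\right)^{\!s},
\]
and summing the (essentially geometric) series over $s\geq r+1$ produces $\dtv(\mu,\nu^{(r)}) \leq C\,\eps^{r+1}$ with $\eps = D\tau/\sqrt{\lambda}<1$ and explicit universal constants $C, D$.

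I expect the main difficulty not to be the qualitative rate $O(\eps^{r+1})$, which the Charlier--Cauchy--Schwarz route delivers almost immediately, but obtaining the small explicit constants claimed in the theorem ($D=4$, $C\leq 570$). The Cauchy--Schwarz step is slightly wasteful (it squares before summing), and the residual polynomial factor $s^{1/4}$ in the tail sum has to be absorbed into $\eps$ at some cost on $D$; tracking all the implicit constants from Stirling and from the tail summation requires care. The paper's Hermite/Riemann-sum approach is presumably better adapted to bookkeeping because each source of error (Taylor truncation of $\lambda(\E^{\I t/\sqrt{\lambda}}-1)$, the Riemann-sum discrepancy, and the contribution of $|\xi|$ close to $\pi$ where the Gaussian approximation breaks down) can be controlled individually and then recombined without ruining the geometric $\eps^{r+1}$ structure.
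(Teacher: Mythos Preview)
Your proposal is correct, and the Charlier--Cauchy--Schwarz route you sketch first is a genuinely different argument from the one the paper carries out in Sections~\ref{sec:fourier_inversion}--\ref{sec:riemann_sum}. The paper does write the local difference $\nu^{(s+1)}(k)-\nu^{(s)}(k)$ as $b_{s+1}$ times a Poisson--Charlier value (this is exactly your $h_{s+1}=P_\lambda\cdot(-1)^{s+1}c_{s+1}$), but it never exploits the $\ell^2(P_\lambda)$ orthogonality. Instead it (i) discards indices $s+1\geq \lambda/4$ by a crude bound, (ii) removes the tail $k>\alpha'\lambda$ via a pointwise Charlier estimate and a Chernov bound, (iii) removes $|\xi|>\lambda^{-1/3}$ from the Fourier integral, (iv) expands the remaining integral as a series of \emph{partial} Hermite functions $\int_{-M}^M(\I u)^m\E^{-u^2/2+\I xu}\,du$, proves a complex Cram\'er-type inequality for those via the multiplication theorem for Hermite polynomials, and finally (v) controls the Riemann sum over $k$ of the resulting bound $\Theta(x,\lambda^{1/6})$. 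Each of these five stages produces its own error term (the upper bounds (U0)--(U3) and the Hermite contribution), and Section~\ref{sec:riemann_sum} sums them all into $570\,\eps^{r+1}$.

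Your route short-circuits all of this: the single inequality $\|h_s\|_{\ell^1}=\sum_k P_\lambda(k)\,|c_s(k;\lambda)|\leq\sqrt{s!/\lambda^s}$ together with $|b_s|\leq(\tau/\sqrt{s})^s$ and Stirling gives $|b_s|\,\|h_s\|_{\ell^1}\leq \sqrt{\E}\,s^{1/4}(\tau/\sqrt{\E\lambda})^s$, and summing the geometric series finishes. Contrary to your expectation, this does \emph{not} produce worse constants than the paper's machinery; it produces dramatically better ones. Since $s^{1/4}\leq \E^{s/(4\E)}$, one gets $\dtv(\mu,\nu^{(r)})\leq\tfrac{\sqrt{\E}}{2}\,\eta^{r+1}/(1-\eta)$ with $\eta=\E^{1/(4\E)-1/2}\,\tau/\sqrt{\lambda}\approx 0.665\,\tau/\sqrt{\lambda}$; in particular with the paper's choice $D=4$ one finds $C$ of order $10^{-2}$ rather than $570$, and one can in fact take $D$ close to $1$. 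What the paper's approach buys is finer \emph{local} information: Proposition~\ref{prop:theta} gives a pointwise-in-$k$ estimate of $\nu^{(s+1)}(k)-\nu^{(s)}(k)$ in terms of the Gaussian-like profile $\Theta((k-\lambda)/\sqrt{\lambda},\lambda^{1/6})$, which your Cauchy--Schwarz step deliberately throws away. For the total-variation statement of Theorem~\ref{main:A} as written, however, your argument is both shorter and sharper.
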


\begin{remark}
The reason why we do not insist on the precise value of the constants $C$ and $D$ in Theorem \ref{main:A} is the following. During the proof of Theorem \ref{main:A} (Sections \ref{sec:fourier_inversion} to \ref{sec:riemann_sum}), we shall add numerous quantities which will depend on the index $r$ of approximation, and we shall then use upper bounds on these quantities which are independent from $r \geq 1$. In almost every case, the order of approximation $r=1$ yields by far the worst estimates; and assuming that $r$ is larger (for instance, larger than $10$) yields much better constants. So, one can state a version of Theorem \ref{main:A} with much smaller universal constants $C$ and $D$ if one replaces the hypothesis $r\geq 1$ by $r \geq 10$. In the following, we tried to make the computation of the constants in the upper bounds easy to track; thus, they are easily improved upon with additional assumptions on $r$ (and possibly on $\tau$ and $\lambda$).
\end{remark}

An immediate consequence of Theorem \ref{main:A} is the following higher-order Chen--Stein inequality, which regards the approximation of the distribution of a sum of independent Bernoulli variables:

\begin{main}\label{main:B}
Let $(p_i)_{i \geq 1}$ be a sequence of probabilities in $(0,1)$, and $X_n = \sum_{i=1}^n \mathrm{Be}(p_i)$ be the sum of independent Bernoulli variables. If 
$$\lambda_n = \sum_{i=1}^n p_i \qquad;\qquad(\sigma_n)^2=\sum_{i=1}^n (p_i)^2\qquad;\qquad \lambda_n>16\,\E\,(\sigma_n)^2$$ and if $\nu_n^{(r)}$ is the approximation scheme of order $r\geq 1$ of the distribution $\mu_n$ of $X_n$, then there exists a universal constant $C \leq 570$ such that
$$\dtv(\mu_n,\nu_n^{(r)}) \leq C\,(\eta_n)^{r+1},\quad\text{with }\eta_n = \frac{4\sqrt{\E}\,\sigma_n}{\sqrt{\lambda_n}}<1.$$
\end{main}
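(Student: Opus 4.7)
The plan is to derive Theorem \ref{main:B} as a direct corollary of Theorem \ref{main:A}, with the hypotheses of the latter verified through Proposition \ref{prop:cauchy_formula}. First I would invoke Proposition \ref{prop:cauchy_formula} applied to $X_n = \sum_{i=1}^n \mathrm{Be}(p_i)$: this already furnishes both the power series expansion of the residue of deconvolution
$$\psi_n(\xi) = \esper[\E^{\I\xi X_n}]\,\E^{-\lambda_n(\E^{\I\xi}-1)} = 1 + \sum_{s=2}^\infty b_{s,n}\,(\E^{\I\xi}-1)^s$$
(with $b_{1,n} = 0$), and, crucially, the sub-Gaussian coefficient bound $|b_{s,n}| \leq (\E\,(\sigma_n)^2/s)^{s/2}$ obtained in its proof via the Cauchy integral formula and the pointwise inequality $|(1+z)\,\E^{-z}|\leq \E^{|z|^2/2}$.

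Next I would perform the parameter identification needed to invoke Theorem \ref{main:A}. Setting $\tau := \sqrt{\E}\,\sigma_n$ and $\lambda := \lambda_n$, the coefficient bound rewrites as
$$|b_{s,n}| \leq \left(\frac{\tau^2}{s}\right)^{\!s/2} = \left(\frac{\tau}{\sqrt{s}}\right)^{\!s},$$
which is exactly the hypothesis required by Theorem \ref{main:A}. With the explicit constant $D=4$ provided by that theorem, the auxiliary quantity $\eps$ becomes
$$\eps = \frac{D\,\tau}{\sqrt{\lambda}} = \frac{4\sqrt{\E}\,\sigma_n}{\sqrt{\lambda_n}} = \eta_n,$$
and the smallness condition $\eps < 1$ is equivalent to the assumption $\lambda_n > 16\,\E\,(\sigma_n)^2$ appearing in Theorem \ref{main:B}.

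Applying Theorem \ref{main:A} with these choices immediately yields $\dtv(\mu_n,\nu_n^{(r)}) \leq C\,(\eta_n)^{r+1}$ with $C \leq 570$, which is precisely the claim. Thus the proof reduces to a parameter substitution, and there is no genuine obstacle: both the existence of the power series and the Gaussian-type coefficient bound have been established on the whole complex plane in Proposition \ref{prop:cauchy_formula}, and they remain valid after the substitution $w = \E^{\I\xi}$ that transports the problem to the unit circle where Theorem \ref{main:A} is stated. In short, the only real content specific to the Bernoulli setting is the recognition that $\tau = \sqrt{\E}\,\sigma_n$ is the correct scaling parameter; everything else is entirely deferred to Theorem \ref{main:A}.
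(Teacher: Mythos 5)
Your proof is correct and follows exactly the same path as the paper: Proposition~\ref{prop:cauchy_formula} supplies $|b_{s,n}|\leq(\E(\sigma_n)^2/s)^{s/2}$, one identifies $\tau=\sqrt{\E}\,\sigma_n$, and Theorem~\ref{main:A} with $D=4$ gives the claim with $\eps=\eta_n$. The verification that $\eps<1$ is equivalent to $\lambda_n>16\,\E\,(\sigma_n)^2$ is a routine unwinding of the substitution, just as you observe.
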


\begin{proof}
This follows readily from Proposition \ref{prop:cauchy_formula}, which yields the estimate $|b_{s,n}| \leq (\frac{\tau_n}{\sqrt{s}})^{s}$ with $\tau_n = \sqrt{\E}\,\sigma_n$.
\end{proof}

\begin{corollary}\label{cor:derived_scheme}
Let $(p_i)_{i \geq 1}$ be a non-increasing sequence of probabilities in $(0,1)$ such that $\sum_{i=1}^\infty (p_i)^2 = \sigma^2 < +\infty$. We denote $\mu_n$ the distribution of $X_n = \sum_{i=1}^n \mathrm{Be}(p_i)$, $\lambda_n = \sum_{i=1}^n p_i$, and $\nu_{n,*}^{(r)}$ the derived approximation scheme of order $r \geq 1$:
$$\widehat{\nu}_{n,*}^{(r)}(\xi) = \E^{\lambda_n(\E^{\I \xi}-1)} \left(1+\sum_{s=2}^r b_s\,(\E^{\I \xi}-1)^s\right),\quad \text{with }b_s=e_s(\{p_1,p_2,\ldots\}').$$
If $\lambda_n > 16\,\E\,\sigma^2$, then
$$\dtv(\mu_n,\nu_{n,*}^{(r)}) \leq C\,(\eta_n)^{r+1} + (r^2+(2\lambda_n+1)r)\,\left(\sum_{s=2}^r (2\sigma)^{s-2}\right)\,r_n$$
with $\eta_n = \frac{4\sqrt{\E}\,\sigma}{\sqrt{\lambda_n}}$, and $r_n = \sum_{i>n} (p_i)^2$.
\end{corollary}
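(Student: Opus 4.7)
The plan is to split by triangle inequality:
\[
\dtv(\mu_n,\nu_{n,*}^{(r)}) \leq \dtv(\mu_n,\nu_n^{(r)}) + \dtv(\nu_n^{(r)},\nu_{n,*}^{(r)}).
\]
For the first summand, I would observe that since $(p_i)_{i \geq 1}$ is non-increasing, $(\sigma_n)^2 \leq \sigma^2$, so the hypothesis $\lambda_n > 16\E\sigma^2$ implies $\lambda_n > 16\E(\sigma_n)^2$, which is exactly what is needed to apply Theorem \ref{main:B}. That theorem yields $\dtv(\mu_n, \nu_n^{(r)}) \leq C(4\sqrt{\E}\sigma_n/\sqrt{\lambda_n})^{r+1} \leq C(\eta_n)^{r+1}$, delivering the first term of the stated bound.

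For the second summand, I would use the explicit formula of \cite[Lemma 3.8]{CDMN20} to write
\[
(\nu_n^{(r)} - \nu_{n,*}^{(r)})(k) = \sum_{s=2}^r (b_{s,n} - b_s) \sum_{t=0}^s (-1)^{s-t}\binom{s}{t}\nu_n^{(0)}(k-t).
\]
The key analytic step is a Cauchy estimate for $|b_{s,n} - b_s|$. The discussion around Equation \eqref{eq:bernoulli_like} applies verbatim to the full alphabet $A = \{p_1, p_2, \ldots\}$, so $\Psi(w) = \prod_{i \geq 1}(1+p_i(w-1))\E^{-p_i(w-1)}$ is entire and coincides with $1+\sum_{s \geq 2}b_s(w-1)^s$. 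Writing $\Psi = \Psi_n \cdot \Phi_n$ with $\Phi_n(w) = \prod_{i>n}(1+p_i(w-1))\E^{-p_i(w-1)}$, the identity $\Psi - \Psi_n = \Psi_n (\Phi_n - 1)$ reduces the control to a product of two pointwise estimates. On the circle $|w-1| = 1/(2\sigma)$, Proposition \ref{prop:cauchy_formula} yields $|\Psi_n(w)| \leq \E^{(\sigma_n)^2/(8\sigma^2)}$, while the bound $|\log\Phi_n(w)| \leq r_n/(4\sigma^2)$ combined with $|\E^x - 1| \leq |x|\E^{|x|}$ gives $|\Phi_n(w) - 1| \leq (r_n/(4\sigma^2))\E^{r_n/(4\sigma^2)}$. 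Cauchy's coefficient formula then produces $|b_{s,n} - b_s| \leq K r_n (2\sigma)^{s-2}$ for an absolute constant $K$.

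Finally I would conclude by combining this Cauchy estimate with a uniform control, over $s \leq r$, on the innermost sum $\sum_k \bigl|\sum_t (-1)^{s-t}\binom{s}{t}\nu_n^{(0)}(k-t)\bigr|$. Using $\nu_n^{(0)}(k-t)/\nu_n^{(0)}(k) = (k)_t/(\lambda_n)^t$ (falling factorial) together with the Poisson factorial moment identities $\esper[(N)_t] = (\lambda_n)^t$ for $N \sim \nu_n^{(0)}$, a careful exchange of summation order that exploits the cancellations in the alternating sum over $t$ should produce the uniform bound $r^2 + (2\lambda_n+1)r$. The hard part will be performing this combinatorial bookkeeping so as to obtain a polynomial-in-$r$ bound rather than the crude exponential $2^s$ that results from naively applying the triangle inequality to the alternating sum; this requires using the zero-mass property of the signed measure $\sum_t (-1)^{s-t}\binom{s}{t}\nu_n^{(0)}(\cdot - t)$ and the fact that only its first two polynomial moments contribute to the leading order. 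Multiplying the uniform bound by the Cauchy estimate $|b_{s,n}-b_s| \leq Kr_n(2\sigma)^{s-2}$ and summing over $s = 2, \ldots, r$ yields the stated error term $(r^2 + (2\lambda_n+1)r)\bigl(\sum_{s=2}^r (2\sigma)^{s-2}\bigr)r_n$.
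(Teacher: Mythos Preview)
Your overall strategy — triangle inequality, then Theorem~\ref{main:B} for $\dtv(\mu_n,\nu_n^{(r)})$, then a separate estimate of $\dtv(\nu_n^{(r)},\nu_{n,*}^{(r)})$ — is exactly the paper's. The first half is fine. The second half, however, diverges from the paper and contains a genuine gap.

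The paper does \emph{not} attack $\dtv(\nu_n^{(r)},\nu_{n,*}^{(r)})$ through the explicit formula and a direct bound on the $\ell^1$-norm of the Poisson--Charlier signed measures. Instead it invokes Lemma~\ref{lem:two_step_approximation}, which controls the total variation distance by
\[
\tfrac{1}{2}\|\psi_n-\psi\|_\infty + \tfrac{\pi}{2\sqrt{3}}\bigl(\|\psi_n'-\psi'\|_\infty + \lambda_n\|\psi_n-\psi\|_\infty\bigr),
\]
and then feeds in the coefficient estimate $|b_s-b_{s,n}|\leq \tfrac{s}{2}\,r_n\,\sigma^{s-2}$, obtained by a short symmetric-function argument on the power sums $\frakp_\lambda(A)-\frakp_\lambda(A_n)$. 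The three summands above, together with $|\E^{\I\xi}-1|\leq 2$, contribute respectively the pieces $s$, $s^2$, and $2\lambda_n s$; bounding $s\leq r$ and summing is precisely what produces the factor $r^2+(2\lambda_n+1)r$.

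Your route tries to recover that same factor from $\sum_{k}\bigl|\sum_t(-1)^{s-t}\binom{s}{t}\nu_n^{(0)}(k-t)\bigr|$ by ``exploiting cancellations'' and the ``zero-mass property''. This is where the argument breaks: those cancellations do not convert into an $\ell^1$ bound polynomial in $s$ and $\lambda_n$. Orthogonality (vanishing moments up to order $s-1$) controls $\ell^2$, not $\ell^1$; the only elementary $\ell^1$ bound available here is the crude $2^s$ you yourself flag, and improving it is essentially the content of Sections~\ref{sec:fourier_inversion}--\ref{sec:riemann_sum}. So the ``careful exchange of summation'' you gesture at would not produce $r^2+(2\lambda_n+1)r$; that specific shape is an artefact of the Fourier lemma, not of the Charlier combinatorics. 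Separately, your Cauchy estimate $|b_{s,n}-b_s|\leq K\,r_n(2\sigma)^{s-2}$ is correct but carries an absolute constant $K>1$ and lacks the factor $s$ that the paper's symmetric-function bound provides; even paired with the right $\ell^1$ estimate it would give the stated inequality only up to a constant. To close the gap, replace your last paragraph by an appeal to Lemma~\ref{lem:two_step_approximation} and use the paper's bound $|b_s-b_{s,n}|\leq \tfrac{s}{2}\,r_n\,\sigma^{s-2}$ in place of the Cauchy estimate.
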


\begin{example}
Suppose that $p_i=\frac{1}{i}$. Notice then that $X_n$ has the distribution of the number of cycles of a uniform random permutation in $\sym(n)$. We have $\sigma^2 = \frac{\pi^2}{6}$, $\log n + \gamma \leq \lambda_n \leq \log n +\gamma+\frac{1}{2n}$, and $r_n = \sum_{i>n} \frac{1}{i^2} \leq \frac{1}{n}$. Therefore, setting $h=\sqrt{\frac{2}{3}}\,\pi$ and $A_\Gamma=\{1,\frac{1}{2},\frac{1}{3},\ldots\}$, we get that for $r \geq 1$, the derived scheme of approximation $\nu_{n,*}^{(r)}$ defined by
$$\widehat{\nu}_{n,*}^{(r)}(\xi) = \E^{\lambda_n(\E^{\I \xi}-1)}\left(1+\sum_{s=2}^r \frake_s(A_\Gamma')\,(\E^{\I \xi}-1)^s\right)$$
satisfies:
$$\dtv(X_n,\nu_{n,*}^{(r)}) \leq C\left(\frac{2\sqrt{\E}\,h}{\sqrt{\log n}}\right)^{r+1} + 1_{(r \geq 2)}\frac{h^{r-2}}{1-h^{-1}}\,\frac{r^2+2(\log n +1 )r}{n}$$
for any $n$ such that $\log n> \frac{8\E \pi^2}{3}$. In particular, if $r\geq 1$ is fixed, then the total variation distance is a $O((\log n)^{-\frac{r+1}{2}})$; this is compatible with the asymptotic estimate from \cite[Theorem 4.3]{CDMN20}.
\end{example}

\begin{lemma}\label{lem:two_step_approximation}
Let $\mu$ and $\nu$ be two signed measures on $\N$ with Fourier transforms
\begin{align*}
\widehat{\mu}(\xi) &= \E^{\lambda(\E^{\I \xi}-1)}\,\psi(\xi) ;\\ 
\widehat{\nu}(\xi) &= \E^{\lambda(\E^{\I \xi}-1)}\,\chi(\xi), 
\end{align*}
with $\lambda>0$.  The total variation distance between $\mu$ and $\nu$ is smaller than
$$\frac{\|\psi-\chi\|_{\infty}}{2} + \frac{\pi}{2\sqrt{3}} \,(\|\psi'-\chi'\|_{\infty} + \lambda \,\|\psi-\chi\|_\infty),$$
where $\|f\|_\infty = \sup_{\xi \in \R/2\pi \Z} |f(\xi)|$ for a continuous function on the circle.
\end{lemma}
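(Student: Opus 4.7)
The approach I would take is Fourier inversion on the torus applied to the signed measure $\rho := \mu - \nu$. Its Fourier transform factors neatly as $\widehat{\rho}(\xi) = \E^{\lambda(\E^{\I\xi}-1)}(\psi(\xi)-\chi(\xi))$, and since $|\E^{\lambda(\E^{\I\xi}-1)}| = \E^{\lambda(\cos\xi-1)}\leq 1$ on the unit circle, two uniform bounds come for free and already reproduce the quantities appearing on the right-hand side of the lemma: $\|\widehat{\rho}\|_\infty \leq \|\psi-\chi\|_\infty$, and, after one differentiation via the product rule, $\|\widehat{\rho}'\|_\infty \leq \lambda\,\|\psi-\chi\|_\infty + \|\psi'-\chi'\|_\infty$.

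I would then write $2\,\dtv(\mu,\nu) = \sum_{k\geq 0} |\rho(k)|$ and control the Fourier coefficients $\rho(k) = \frac{1}{2\pi}\int_{-\pi}^\pi \widehat{\rho}(\xi)\,\E^{-\I k\xi}\,\DD{\xi}$ by splitting the sum at $k=0$. The $k=0$ term is bounded trivially by $\|\widehat{\rho}\|_\infty \leq \|\psi-\chi\|_\infty$, which already produces the first summand of the announced bound. For $k\geq 1$, an integration by parts (no boundary term, thanks to $2\pi$-periodicity) yields $\rho(k) = \frac{1}{2\pi\,\I k}\int_{-\pi}^\pi \widehat{\rho}'(\xi)\,\E^{-\I k\xi}\,\DD{\xi}$, so that $\I k\,\rho(k)$ is the $k$-th Fourier coefficient of $\widehat{\rho}'$.

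Finally, a Cauchy--Schwarz inequality combined with Parseval's identity finishes the job. Since $\rho$ is supported on $\N$, one has
\begin{align*}
\sum_{k \geq 1}|\rho(k)| &\leq \biggl(\sum_{k \geq 1}\frac{1}{k^2}\biggr)^{\!1/2}\biggl(\sum_{k \in \Z} k^2|\rho(k)|^2\biggr)^{\!1/2} \\
&= \frac{\pi}{\sqrt{6}}\,\biggl(\frac{1}{2\pi}\int_{-\pi}^\pi|\widehat{\rho}'(\xi)|^2\,\DD{\xi}\biggr)^{\!1/2} \leq \frac{\pi}{\sqrt{6}}\,\|\widehat{\rho}'\|_\infty.
\end{align*}
Combining this with the $k=0$ estimate, inserting the bound on $\|\widehat{\rho}'\|_\infty$, and dividing by $2$ yields the stated inequality, in fact with the slightly sharper constant $\pi/(2\sqrt{6}) \leq \pi/(2\sqrt{3})$. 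The proof is otherwise essentially routine: the only points requiring any care are justifying the integration by parts (which uses the $C^1$-regularity of $\psi$ and $\chi$ implicit in the finiteness of $\|\psi'-\chi'\|_\infty$) and applying Parseval's identity to the continuous function $\widehat{\rho}'$ on the compact torus $\Tor$. I foresee no genuine obstacle.
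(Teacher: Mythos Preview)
Your proposal is correct and follows essentially the same approach as the paper: split off the $k=0$ term, apply Cauchy--Schwarz to $\sum_{k\geq 1}|\rho(k)|$, invoke Parseval to pass to $\|\widehat{\rho}'\|_{L^2(\Tor)}$, and then use $|\E^{\lambda(\E^{\I\xi}-1)}|\leq 1$ together with the product rule. Your observation about the constant is also accurate: since $\sum_{k\geq 1}k^{-2}=\pi^2/6$, the argument genuinely gives $\pi/(2\sqrt{6})$ rather than the $\pi/(2\sqrt{3})$ stated in the paper, so the lemma holds with the sharper constant you found.
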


\begin{proof}
The two sequences $(\mu(n))_{n \in \N}$ and $(\nu(n))_{n \in \N}$ are summable, and they yield two functions in the Wiener algebra of absolutely convergent Fourier series:
$$\widehat{\mu}(\xi) = \sum_{n \in \N} \mu(n)\,\E^{\I n \xi}\qquad;\qquad \widehat{\nu}(\xi) = \sum_{n \in \N} \nu(n)\,\E^{\I n \xi}.$$
By the Cauchy--Schwarz inequality,
\begin{align*}
2\,\dtv(\mu,\nu) &= \sum_{n \in \N}|\mu(n)-\nu(n)| \\ 
&\leq |\mu(0)-\nu(0)| + \sqrt{\sum_{n=1}^\infty \frac{1}{n^2}}\,\sqrt{\sum_{n=1}^\infty (n|\mu(n)-\nu(n)|)^2} \\ 
&\leq \|\widehat{\mu}-\widehat{\nu}\|_{\mathscr{L}^1(\Tor)} + \frac{\pi}{\sqrt{3}} \,\|\widehat{\mu}'-\widehat{\nu}'\|_{\mathscr{L}^2(\Tor)} \\ 
&\leq \|\psi-\chi\|_{\infty} + \frac{\pi}{\sqrt{3}} \,(\|\psi'-\chi'\|_{\infty} + \lambda \,\|\psi-\chi\|_\infty)
\end{align*}
since $\|\E^{\lambda(\E^{\I \xi}-1)}\|_\infty = 1$.
\end{proof}

\begin{proof}[Proof of Corollary \ref{cor:derived_scheme}]
If $(\sigma_n)^2 = \sum_{i=1}^n (p_i)^2$, since $\sigma^2 > (\sigma_n)^2$, we know from Theorem \ref{main:B} that 
$$\dtv(\mu,\nu_n^{(r)}) \leq C\,(\eta_n)^{r+1}$$
with $\eta_n = \frac{4\sqrt{\E}\,\sigma}{\sqrt{\lambda_n}}$. Therefore, we only have to add an estimate of $\dtv(\nu_n^{(r)},\nu_{n,*}^{(r)})$. We set $A_n = \{p_1,p_2,\ldots,p_n\}$ and $A = \{p_1,p_2,\ldots\}$. The two approximation schemes of order $r$ (standard and derived) have the same parameter $\lambda_n$, and their deconvolution residues are
$$\psi_n(\xi) = 1+\sum_{s=2}^r b_{s,n}\,(\E^{\I \xi}-1)^s\qquad;\qquad \psi(\xi) = 1+\sum_{s=2}^r b_{s}\,(\E^{\I \xi}-1)^s$$
with $b_{s,n} = \frake_s(A_n')$ and $b_s = \frake_s(A')$. If $k \geq 3$, then for any $i>n \geq j$, we have $(p_i)^2(p_j)^k \geq (p_i)^k (p_j)^2$, so
\begin{align*}
\sum_{i>n \geq j} (p_i)^2(p_j)^k &\geq \sum_{i>n \geq j} (p_i)^k(p_j)^2; \\ 
\sum_{i>n,\,\,j \geq 1} (p_i)^2(p_j)^k &\geq \sum_{i>n,\,\,j \geq 1} (p_i)^k(p_j)^2; \\ 
\frac{\sum_{i>n} (p_i)^2}{\sum_{j \geq 1}(p_j)^2} &\geq\frac{\sum_{i>n} (p_i)^k}{\sum_{j \geq 1}(p_j)^k}.
\end{align*}
As a consequence, for any integer partition $\lambda=(\lambda_1\geq \lambda_2 \geq \cdots \geq \lambda_\ell)$ with size $s$ and whose parts are all larger than $2$, we have:
\begin{align*}
|\frakp_\lambda(A) - \frakp_\lambda(A_n)| &\leq \frakp_{\lambda}(A) \sum_{a=1}^\ell \left(\frac{\frakp_{\lambda_a}(A)-\frakp_{\lambda_a}(A_n)}{\frakp_{\lambda_a}(A)}\right) \\ 
&\leq \ell\, \left(\frac{\frakp_{2}(A)-\frakp_{2}(A_n)}{\frakp_{2}(A)}\right) \frakp_\lambda(A) \\ 
&\leq \frac{s}{2}\,\frac{\sigma^2 - (\sigma_n)^2}{\sigma^2} \, \sigma^s.
\end{align*}
As $\sum_{\lambda \in \mathfrak{Y}(s)} \frac{1}{z_\lambda}=1$, we obtain from this:
\begin{align*}
|b_{s}-b_{s,n}| &\leq \frac{s}{2}\,(\sigma^2 - (\sigma_n)^2) \, \sigma^{s-2}; \\ 
\|\psi-\psi_n\|_\infty &\leq  2(\sigma^2 - (\sigma_n)^2) \left(\sum_{s=2}^r s\,(2\sigma)^{s-2}\right); \\
\|\psi'-\psi_n'\|_\infty &\leq  (\sigma^2 - (\sigma_n)^2) \left(\sum_{s=2}^r s^2\,(2\sigma)^{s-2}\right).
\end{align*}
The inequality follows then immediately from Lemma \ref{lem:two_step_approximation}, since $\frac{\pi}{2\sqrt{3}} \leq 0.9069\leq 1$.
\end{proof}
\medskip

Many ingredients in the proof of Theorems \ref{main:A} and \ref{main:B} rely on estimates of the Fourier transform of $X$, and not on the fact that $X$ is a sum of independent Bernoulli variables. As a consequence, one can extend Theorem \ref{main:B} to the case of mod-Poisson convergent sequences with Bernoulli asymptotics. 

\begin{main}\label{main:C}
Let $(X_n)_{n \in \N}$ be a sequence of integer-valued random variables and $(\mu_n)_{n \in \N}$ the corresponding sequence of discrete distributions. We suppose that the sequence $(X_n)_{n\in \N}$ converges mod-Poisson with parameters $(\lambda_n)_{n \in \N}$ and Bernoulli asymptotics:
$$ \forall w \in D(0,\rho),\,\,\left|\esper[w^{X_n}]\,\E^{-\lambda_n(\E^w-1)}-\frakE(A',w-1)\right| \leq \eps_n$$
for some square-summable family $A$, some $\rho>1$ and some sequence $(\eps_n)_{n \in \N}$ going to zero. Then, with the same universal constants $C$ and $D$ as in Theorem \ref{main:A}, setting $\sigma^2 = \sum_{a \in A}a^2$ and assuming that $\sqrt{\lambda_n}>D\sqrt{\E}\,\sigma$, we have
$$\dtv(\mu_n,\nu_{n,*}^{(r)}) \leq C\,\left(\frac{D\sqrt{\E}\,\sigma}{\sqrt{\lambda_n}}\right)^{r+1}+ \eps_n\left(\frac{\rho}{\rho-1}+\lambda_n\right).$$
In particular, if the sequence $(\lambda_n)_{n \in \N}$ goes to infinity and if $\eps_n= O((\lambda_n)^{-\frac{r+3}{2}})$, then $\dtv(\mu_n,\nu_{n,*}^{(r)}) =O((\lambda_n)^{-\frac{r+1}{2}})$.
\end{main}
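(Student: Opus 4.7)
The plan is to adapt the two-step strategy used in the proof of Corollary \ref{cor:derived_scheme}, but to insert between $\mu_n$ and $\nu_{n,*}^{(r)}$ an auxiliary signed measure whose deconvolution residue is precisely the limiting function $\frakE(A',\cdot-1)$. Concretely, I would let $\tilde\mu_n$ be the signed measure on $\N$ with Fourier transform
$$\widehat{\tilde\mu}_n(\xi) = \E^{\lambda_n(\E^{\I\xi}-1)}\,\frakE(A',\E^{\I\xi}-1);$$
this is well defined because the right-hand side, viewed as a function of $w=\E^{\I\xi}$, extends to an entire function whose power series at $w=0$ yields a summable sequence of coefficients $\tilde\mu_n(k)$. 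The triangle inequality then gives
$$\dtv(\mu_n,\nu_{n,*}^{(r)}) \leq \dtv(\mu_n,\tilde\mu_n) + \dtv(\tilde\mu_n,\nu_{n,*}^{(r)}),$$
and I would bound the two summands separately.

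For the first summand, I would invoke Lemma \ref{lem:two_step_approximation} with $\psi=\psi_n$ and $\chi=\tilde\psi$, where $\tilde\psi(w)=\frakE(A',w-1)$. The hypothesis of the theorem directly gives $\|\psi_n-\tilde\psi\|_\infty \leq \eps_n$ on the unit circle. To estimate $\|\psi_n'-\tilde\psi'\|_\infty$, I would use Cauchy's integral formula for the derivative along a circle of radius $r_0<\rho-1$ centred at a point of the unit circle, which lies inside the disc of holomorphy $D(0,\rho)$; letting $r_0\to\rho-1$ yields $\|\psi_n'-\tilde\psi'\|_\infty\leq\eps_n/(\rho-1)$. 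Plugging these two estimates into Lemma \ref{lem:two_step_approximation} and using $\pi/(2\sqrt{3})<1$ gives
$$\dtv(\mu_n,\tilde\mu_n) \leq \frac{\eps_n}{2} + \frac{\eps_n}{\rho-1} + \lambda_n\,\eps_n \leq \eps_n\left(\frac{\rho}{\rho-1}+\lambda_n\right),$$
which is exactly the second term in the target inequality.

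For the second summand, the key observation is that $\nu_{n,*}^{(r)}$ is by construction the approximation scheme of order $r$ attached to the deconvolution residue $\tilde\psi$. Moreover, $\tilde\psi(w)=1+\sum_{s\geq 2}b_s(w-1)^s$ with $b_s=\frake_s(A')$, and by the extension of Proposition \ref{prop:cauchy_formula} to the infinite summable alphabet $A$ (discussed at the beginning of Subsection \ref{sub:bernoulli_like_residue}), $|b_s|\leq(\sqrt{\E}\,\sigma/\sqrt{s})^s$. The hypotheses of Theorem \ref{main:A} are therefore met for $\tilde\mu_n$ with $\tau=\sqrt{\E}\,\sigma$, and applying that theorem yields
$$\dtv(\tilde\mu_n,\nu_{n,*}^{(r)}) \leq C\left(\frac{D\sqrt{\E}\,\sigma}{\sqrt{\lambda_n}}\right)^{r+1},$$
under the assumption $\sqrt{\lambda_n}>D\sqrt{\E}\,\sigma$. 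Summing the two estimates gives the theorem, and the final asymptotic claim then follows by comparing the orders of $\eps_n\,\lambda_n$ and $(\lambda_n)^{-(r+1)/2}$.

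The main obstacle is that Theorem \ref{main:A} is stated for the law of an integer-valued random variable, whereas here one needs to apply it to the signed measure $\tilde\mu_n$. I expect this to require only a notational adaptation of the proof to be carried out in Sections \ref{sec:fourier_inversion}--\ref{sec:riemann_sum}: the argument there is built on Fourier inversion over $\Tor$ and on estimates of Hermite-like contour integrals, both of which are linear operations insensitive to positivity or to the probabilistic interpretation of $\mu$. Still, a careful reading is needed to confirm that every constant in the estimates depends only on $\lambda_n$ and on the coefficient bound $|b_s|\leq(\tau/\sqrt{s})^s$, so that the extension to signed measures with summable Fourier coefficients goes through unchanged.
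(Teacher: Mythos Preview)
Your proposal is correct and follows essentially the same route as the paper: your auxiliary measure $\tilde\mu_n$ is precisely what the paper denotes $\nu_{n,*}^{(\infty)}$, and the two-step decomposition, the use of Lemma~\ref{lem:two_step_approximation} together with a Cauchy estimate on a circle of radius $\rho-1$, and the application of Theorem~\ref{main:A} with $\tau=\sqrt{\E}\,\sigma$ are all exactly as in the paper's argument. Your closing remark about positivity also matches the paper's own observation that the proof of Theorem~\ref{main:A} never uses that $\mu$ is a positive measure.
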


\begin{proof}
The proof of Theorem \ref{main:A} will never use the fact that the distribution $\mu$ of $X$ is positive; therefore, 
we can use it with the distribution $\nu_{n,*}^{(\infty)}$ defined by the Fourier transform
$$\widehat{\nu}_{n,*}^{(\infty)}(\xi) = \E^{\lambda_n(\E^{\I \xi}-1)}\left(1+\sum_{s=2}^\infty \frake_s(A')(\E^{\I \xi}-1)^s\right).$$
Hence, $\dtv(\widehat{\nu}_{n,*}^{(\infty)},\widehat{\nu}_{n,*}^{(r)}) \leq C\,(\eta_n)^{r+1}$ with $\eta_n = \frac{D\sqrt{\E}\,\sigma}{\sqrt{\lambda_n}}$, assuming that $\eta_n<1$. We then add the distance
$$\dtv(\mu_n,\widehat{\nu}_{n,*}^{(\infty)}) \leq \|\psi_n-\psi\|_\infty + \frac{\pi}{2\sqrt{3}}\,(\|\psi_n'-\psi'\|_{\infty} + \lambda_n \,\|\psi_n-\psi\|_\infty),$$
with $\psi_n(\xi) = \widehat{\mu}_n(\xi)\,\E^{-\lambda_n(\E^{\I \xi}-1)}$ and $\psi(\xi) = \frakE(A',\E^{\I \xi}-1)$. By assumption, $\|\psi_n-\psi\|_\infty \leq \eps_n$. Moreover, we have $\psi_n(\xi)=\Psi_n(\E^{\I \xi})$ and $\psi(\xi) = \Psi(\E^{\I \xi})$, with
$$\Psi_n(w) = \esper[w^{X_n}]\,\E^{-\lambda_n(\E^w-1)}\qquad;\qquad \Psi(w) = \frakE(A',w-1).$$
Therefore, we can use the Cauchy integral formula in order to control the derivatives: for any $w=\E^{\I \xi}$ on the unit circle,
$$|\psi_n'(\xi) - \psi'(\xi)| = |\Psi_n'(w)-\Psi'(w)| \leq \frac{1}{2\pi} \int_{\partial D(w,\rho-1)} \left|\frac{\Psi_n(z)-\Psi(z)}{(z-w)^2}\right| \DD{z} \leq \frac{\eps_n}{\rho-1}.$$
We conclude as in the proof of Corollary \ref{cor:derived_scheme} by replacing $\frac{\pi}{2\sqrt{3}}$ by the larger constant $1$.
\end{proof}

\begin{remark}\label{rem:positive}
Our main Theorems \ref{main:A}, \ref{main:B} and \ref{main:C} compare the probability distribution $\mu$ of an integer-valued random variable $X$ with a \emph{signed} distribution $\nu^{(r)}$ on $\N$. For instance, in the setting of Theorem \ref{main:A} and when $r=2$, an explicit formula for $\nu^{(2)}$ is:
$$\nu^{(2)}(k) = \nu^{(0)}(k)\,\left(1 + b_2\left(1-\frac{2k}{\lambda} + \frac{k(k-1)}{\lambda^2}\right)\right),$$
$\nu^{(0)}$ being the Poisson distribution with parameter $\lambda$.
In particular, $\sum_{k \in \N} \nu^{(2)}(k) = 1$, but $\nu^{(2)}$ can take negative values for $k$ large enough if $b_2<0$ (under the hypotheses of Theorems \ref{main:B} and \ref{main:C}, $b_2=-\frac{\frakp_2(A)}{2}$ is indeed negative). This possibility is a general phenomenon for the approximating distributions $\nu^{(r)}$. Let $N_r$ be the smallest integer such that 
$$\underbrace{\sum_{m\,|\,m\leq N_r \text{ and }\nu^{(r)}(m)>0} \nu^{(r)}(m)}_{\alpha_r} > \underbrace{-\sum_{m\,|\,\nu^{(r)}(m)<0} \nu^{(r)}(m)}_{\beta_r};$$
this integer exists because $\sum_{m \in \N} \nu^{(r)}(m)=1>0$. Then, one can define a \emph{positive} probability distribution $\mu^{(r)}$ by setting
 $$\mu^{(r)}(n) = \begin{cases}
  0&\text{if }n<N_r,\\
  \alpha_r-\beta_r &\text{if }n=N_r,\\
  \max(0,\nu^{(r)}(n)) &\text{if }n>N_r.
\end{cases}$$
It is easy to see that $\dtv(\mu,\mu^{(r)}) \leq \dtv(\mu,\nu^{(r)})$. Therefore, our results yield for any $r \geq 1$ a law of random variables $\mu^{(r)}$ which is a suitable approximation of the law $\mu$. Unfortunately, it seems that there is no easy way to sample a random variable under the distribution $\mu^{(r)}$. On the contrary, for any
   bounded function $f : \N \to \C$, it is easy to compute the approximation $\nu^{(r)}(f) = \sum_{k \in \N} \nu^{(r)}(k)\,f(k)$ of $\esper[f(X)]$ by means of a sampling method. Indeed, set 
  $$g(k) = f(k) + \sum_{s=2}^r b_s ((\Delta_+)^{\circ s} f) (k),$$
  where $(\Delta_+ f)(k) = f(k+1)-f(k)$ is the discrete difference operator. Then, $\nu^{(r)}(f) = \esper[g(Y)]$, where $Y$ follows a Poisson distribution with parameter $\lambda$; see \cite[Proposition 1.12]{CDMN20}.
\end{remark}
\bigskip

\section{Fourier inversion and control of the terms with large parameters}\label{sec:fourier_inversion}
Throughout this section, $\mu$ is a probability measure on $\N$, and $\lambda$ and $\tau$ are positive real numbers such that $\lambda>(4\tau)^2$ and
$$\widehat{\mu}(\xi) \, \E^{-\lambda(\E^{\I \xi}-1)} = 1+\sum_{s=2}^\infty b_{s}\,(\E^{\I \xi}-1)^s \quad \text{with }|b_{s}|\leq\left(\frac{\tau}{\sqrt{s}}\right)^s.$$
The approximating measure of order $r$ for $\mu$ is the measure $\nu^{(r)}$ with Fourier transform
$$\widehat{\nu}^{(r)}(\xi) = \E^{\lambda(\E^{\I \xi}-1)}\,\left(1+\sum_{s=2}^r b_s\,(\E^{\I \xi}-1)\right).$$
We can consider that $\nu^{(\infty)}=\mu$. In order to control $\dtv(\mu,\nu^{(r)})$ for $r \geq 1$, the basic strategy is to use the Fourier inversion formula in order to compute $\dtv(\nu^{(s)},\nu^{(s+1)})$ for $s \geq r$, and then to sum these estimates. The fast decay in $O(s^{-\frac{s}{2}})$ of the coefficients $b_s$ will exactly compensate the fast growth in $O(s^{\frac{s}{2}})$ of certain estimates. Our strategy is inspired by certain similar but less precise arguments from \cite{Hwa99}, which were already reused in \cite{CDMN20}. In particular, hereafter we split a Fourier integral in two parts at $|\xi| =  \lambda^{-\frac{1}{6}}$, and a similar splitting at $|\xi| = \lambda^{-\frac{1}{7}}$ was used in the aforementioned articles. \medskip

\subsection{Removal of the large indices of approximation} 
To start with, let us notice that 
\begin{align*}
\widehat{\nu^{(s+1)}}(\xi) - \widehat{\nu^{(s)}}(\xi) &= b_{s+1}\,\E^{\lambda(\E^{\I \xi}-1)}\, (\E^{\I \xi}-1)^{s+1};\\
\nu^{(s+1)}(k) - \nu^{(s)}(k) &= b_{s+1} \sum_{l=0}^{s+1}(-1)^{s+1-l}\binom{s+1}{l}\, \nu(k-l),
\end{align*}
where $\nu=\nu^{(1)}$ is the Poisson distribution with parameter $\lambda$. As a consequence, regardless of the value of $\lambda$, 
$$
\sum_{k \in \N} |\nu^{(s+1)}(k) - \nu^{(s)}(k)| \leq |b_{s+1}| \sum_{l=0}^{s+1} \binom{s+1}{l}\left(\sum_{k \in \N} \nu(k-l)\right) = 2^{s+1}\,b_{s+1} \leq \left(\frac{4\,\tau^2}{s+1}\right)^{\!\frac{s+1}{2}}.
$$
Most of the work hereafter consists in proving a better upper bound when $s+1$ is not too large, say smaller than $\frac{\lambda}{4}$. For the indices $s$ such that $s+1 \geq \max(\frac{\lambda}{4},r+1)$, we shall simply use the trivial upper bound above. Let us remark that the function
$$x \mapsto \left(\frac{4\,\tau^2}{x}\right)^{\frac{x}{2}}$$
attains its maximum at $x = 4\E^{-1}\,\tau^2$. Since $\lambda> 16\,\tau^2$, $\max(\frac{\lambda}{4},r+1)>4\,\tau^2 > 4\E^{-1}\,\tau^2$ and the largest term of the series $\sum_{s+1\geq \max(\frac{\lambda}{4},r+1)} (\frac{4\,\tau^2}{s+1})^{\frac{s+1}{2}}$ is the first one, and it is smaller than $(\frac{16\,\tau^2}{\lambda})^{\frac{r+1}{2}}$. 
 Moreover, the ratio between two consecutive terms of the series is:
$$\frac{(\frac{4\,\tau^2}{s+2})^{\frac{s+2}{2}}}{(\frac{4\,\tau^2}{s+1})^{\frac{s+1}{2}}} = \sqrt{\frac{4\,\tau^2}{s+1}} \left(\frac{s+1}{s+2}\right)^{\!\frac{s+2}{2}} \leq \sqrt{\frac{4\,\tau^2}{\E(s+1)}} \leq \sqrt{\frac{16\,\tau^2}{\E\lambda}} \leq\E^{-\frac{1}{2}}.$$ 
Therefore, 
\begin{align}
\sum_{s+1\geq \max(\frac{\lambda}{4},r+1)} \left(\frac{4\,\tau^2}{s+1}\right)^{\frac{s+1}{2}} &\leq \frac{1}{1-\E^{-\frac{1}{2}}} \left(\frac{16\,\tau^2}{\lambda}\right)^{\frac{r+1}{2}};\notag\\ 
\sum_{s+1\geq \max(\frac{\lambda}{4},r+1)} \dtv(\nu^{(s)},\nu^{(s+1)}) &\leq a_0\, \left(\frac{16\,\tau^2}{\lambda }\right)^{\frac{r+1}{2}}\tag{U0}\label{eq:U0} 
\end{align}
with 
$$a_0 = \frac{1}{2(1-\E^{-\frac{1}{2}})} \leq 1.2708.$$
If $r+1\geq \frac{\lambda}{4}$, then we are done. In the sequel, we suppose that $\frac{\lambda}{4} > r+1 \geq 2$, and we are going to evaluate $\dtv(\nu^{(s)},\nu^{(s+1)})$ when $r+1\leq s+1 < \frac{\lambda}{4}$. At the end, we shall add to these controls the upper bound \eqref{eq:U0} computed above.
\medskip

\subsection{Removal of the tails of the distributions}
For $s+1 < \frac{\lambda}{4}$, we start by rewriting the local difference between $\nu^{(s+1)}$ and $\nu^{(s)}$ as a Poisson--Charlier polynomial:
$$\nu^{(s+1)}(k) - \nu^{(s)}(k) = b_{s+1} \,\nu(k)\,\left(\sum_{l=0}^{\min(s+1,k)} (-1)^{s+1-l}\,\binom{s+1}{l}\,\frac{k!\,\lambda^{-l}}{(k-l)!}\right).$$
We refer to \cite[Remark 3.9]{CDMN20} for a proof of this formula, and to \cite[Section 2.8.1]{Sze39} for the general properties of the Poisson--Charlier orthogonal polynomials.
Since $\binom{s+1}{l} \leq \frac{(s+1)^l}{l!}$,
$$|\nu^{(s+1)}(k) - \nu^{(s)}(k)| \leq |b_{s+1}|   \left(1+\frac{s+1}{\lambda}\right)^{k} \,\nu(k)\leq |b_{s+1}|\,\left(\frac{5}{4}\right)^k\,\nu(k).$$
In order to estimate $\sum_{k\in \N}|\nu^{(s+1)}(k) - \nu^{(s)}(k)|$, we first remove the integers $k$ such that $k$ is too large. We set
$$\beta = \frac{4}{5}\left(1-\frac{1}{16\,\E}\right)\simeq 0.78160\ldots$$
  and $\alpha\simeq 1.73026\ldots$ such that $\alpha-\alpha\log \alpha = \beta$, and we remove the integers $k$ larger than $\frac{5}{4}\alpha\lambda$. We have:
\begin{align*}
\sum_{k> \frac{5}{4}\alpha\lambda} |\nu^{(s+1)}(k) - \nu^{(s)}(k)| &\leq |b_{s+1}|   \sum_{k>\frac{5}{4}\alpha\lambda} \left(\frac{5}{4}\right)^k\, \proba[\mathcal{P}(\lambda) = k] = |b_{s+1}|\,\E^{\frac{\lambda}{4}}   \sum_{k>\frac{5}{4}\alpha\lambda} \proba\!\left[\mathcal{P}\left(\frac{5}{4}\lambda\right) = k\right],
\end{align*}
and the right-hand side is proportional to the tail of a Poisson distribution with parameter $\frac{5}{4}\lambda$, so it can be estimated by using for instance the Chernov inequality:
\begin{align*}
\proba\!\left[\mathcal{P}\left(\frac{5}{4}\lambda\right) > \frac{5}{4}\alpha\lambda\right] &\leq \inf_{t \geq 0}\left(\E^{\frac{5}{4}\lambda(\E^{t}-1-\alpha t)} \right) = \E^{\frac{5}{4}(\beta-1)\lambda}.
\end{align*}
Thus, for $s+1<\frac{\lambda}{4}$,
$$\frac{1}{2}\sum_{k> \frac{5}{4}\alpha\lambda} |\nu^{(s+1)}(k) - \nu^{(s)}(k)| \leq \frac{|b_{s+1}|}{2}\,\E^{(\frac{5}{4}\beta-1)\lambda} \leq \frac{1}{2}\left(\frac{\tau^2}{\lambda(s+1)}\right)^{\!\frac{s+1}{2}}\,\lambda^{\frac{s+1}{2}}\E^{(\frac{5}{4}\beta-1)\lambda}.
$$
Notice that $\frac{5}{4}\beta-1 = -\frac{1}{16\,\E}$.
The function $\lambda \mapsto \lambda^{\frac{s+1}{2}}\E^{-\frac{\lambda}{16\,\E}}$ attains its maximum at $\lambda=8\,\E(s+1)$, so
$$\lambda^{\frac{s+1}{2}}\E^{(\frac{5}{4}\beta-1)\lambda} \leq (8(s+1))^{\frac{s+1}{2}}.$$ 
Therefore, we obtain the first upper bound:
\[
\frac{1}{2}\sum_{k> \frac{5}{4}\alpha\lambda} |\nu^{(s+1)}(k) - \nu^{(s)}(k)| \leq a_1\,\left(\frac{8\,\tau^2}{\lambda}\right)^{\!\frac{s+1}{2}} \tag{U1}\label{eq:U1}
\]
with $a_1=\frac{1}{2}$. For future reference, $\frac{5}{4}\alpha = \alpha' \simeq 2.16282\ldots$\medskip

\subsection{Removal of the tails of the Fourier integrals}
Suppose now that $k$ belongs to the interval $[0, \alpha'\lambda]$. By the Fourier inversion formula,
$$I_k^{(s)}=\nu^{(s+1)}(k) - \nu^{(s)}(k) = b_{s+1}\int_{(-\pi,\pi)} (\E^{\I \xi}-1)^{s+1}\, \E^{\lambda(\E^{\I \xi}-1) - \I k\xi} \,\frac{\!\DD{\xi}}{2\pi}.$$
 With $k = \lambda + x\lambda^{\frac{1}{2}}$ and $|x|=O(\lambda^{\frac{1}{2}})$, the idea is to approximate 
$(\E^{\I \xi}-1)^{s+1}$ by $(\I \xi)^{s+1}$ and $\lambda(\E^{\I\xi}-1) - \I k \xi$ by $-\frac{\lambda \xi^2}{2}- \I x (\lambda^{\frac{1}{2}}\xi)$, thereby obtaining an integral which can be computed explicitly and which yields a term proportional to the Hermite polynomial $H_{s+1}(x)$. Let us first split the integral $I_{k}^{(s)}$ in two parts $I_{k,1}^{(s)}$ and $I_{k,2}^{(s)}$, according to whether $|\xi|$ is larger or smaller than $\lambda^{-\frac{1}{3}}$.  For any $\xi \in (-\pi,\pi)$, 
$$\mathrm{Re}(\E^{\I \xi}-1) = \cos\xi - 1 \leq -\frac{2\xi^2}{\pi^2}, $$ 
so the integral $I_{k,1}^{(s)}$ corresponding to the outside of the interval $(-\lambda^{-\frac{1}{3}},\lambda^{-\frac{1}{3}})$ is smaller than
$$
 \frac{|b_{s+1}|}{\pi} \int_{\lambda^{-\frac{1}{3}}}^{\infty} \E^{-\frac{2\lambda\xi^2}{\pi^2}} \,\xi^{s+1} \DD{\xi} =  \frac{|b_{s+1}|}{2\pi}  \left(\frac{\pi^2}{2\lambda}\right)^{\!\frac{s}{2}+1} \,\Gamma\!\left(\frac{s}{2}+1,\frac{2\lambda^{\frac{1}{3}}}{\pi^2}\right),$$ 
 where $\Gamma(s,x) = \int_{x}^\infty u^{s-1}\,\E^{-u}\DD{u}$ denotes the incomplete $\Gamma$ function. Notice that for $t>0$,
$$
x^{t}\,\Gamma(s,x) = \int_{x}^\infty x^{t}\,u^{s-1}\,\E^{-u}\DD{u} \leq \int_{x}^\infty u^{s+t-1}\,\E^{-u}\DD{u} = \Gamma(s+t,x) \leq \Gamma(s+t).
$$
 With $\lambda > 4(s+1) \geq 8$, there are less than $\alpha'\lambda+1 \leq (\alpha' + \frac{1}{8})\,\lambda=\alpha''\lambda$ with $\alpha'' \simeq 2.28782\ldots$ integers $k$ in the interval $[0,\alpha'\lambda]$, so we get: 
 \begin{align}
\frac{1}{2}\sum_{k\leq \alpha'\lambda} I_{k,1}^{(s)} &\leq \frac{\pi^3}{16}\,\alpha''\,|b_{s+1}|  \left(\frac{\pi^2}{2\lambda}\right)^{\!\frac{s+1}{2}}\, \left(\frac{2\lambda^{\frac{1}{3}}}{\pi^2}\right)^{\!\frac{3}{2}}\,\Gamma\!\left(\frac{s}{2}+1,\frac{2\lambda^{\frac{1}{3}}}{\pi^2}\right)
 \notag\\ 
&\leq \frac{\pi^3}{16}\,\alpha''\,|b_{s+1}|\left(\frac{\pi^2}{2\lambda}\right)^{\!\frac{s+1}{2}}\,\Gamma\!\left(\frac{s+3}{2}+1\right) \notag \\ 
&\leq a_2\, (s+3)^{\frac{3}{2}}\,\left(\frac{\pi^2\tau^2}{4\E\lambda}\right)^{\!\frac{s+1}{2}}\tag{U2}\label{eq:U2}
\end{align}
with
$$
a_2 = \frac{\pi^\frac{7}{2}}{32}\,\alpha'' \leq 3.9292.
$$
We now focus on the part $I_{k,2}^{(s)}$ of the integral $I_k^{(s)}$ corresponding to the interval $(-\lambda^{-\frac{1}{3}},\lambda^{-\frac{1}{3}})$. First, let us notice that on this interval,
\begin{align*}
\lambda\left|\E^{\I \xi}-1-\I\xi+\frac{\xi^2}{2}\right| &\leq \frac{\lambda\,|\xi|^3}{6};\\ 
\left|\E^{\lambda(\E^{\I \xi}-1-\I\xi+\frac{\xi^2}{2})}-1\right| &\leq \frac{\E^{\frac{1}{6}}}{6}\,\lambda\,|\xi|^3
\end{align*}
by using the inequality $|\E^{z}-1| \leq |z|\,\E^{|z|}$. This leads to:
\begin{align}
&\frac{1}{2}\sum_{k \leq \alpha'\lambda} \left|I_{k,2}^{(s)} - b_{s+1}\int_{-\lambda^{-\frac{1}{3}}}^{\lambda^{-\frac{1}{3}}} (\E^{\I \xi}-1)^{s+1} \,\E^{-\frac{\lambda \xi^2}{2} - \I x (\lambda^{\frac{1}{2}}\xi)} \,\frac{\!\DD{\xi}}{2\pi}\right| \notag\\ 
&\leq \frac{\E^{\frac{1}{6}}}{6}\,\alpha'' \,|b_{s+1}|\, \lambda^{2} \int_{\R_+} \xi^{s+4}\,\E^{-\frac{\lambda \xi^2}{2}}\,\frac{\!\DD{\xi}}{2\pi} \notag \\ 
&\leq  a_3\, (s+3)^{\frac{3}{2}}\left(\frac{\tau^2}{\E\lambda}\right)^{\frac{s+1}{2}}\tag{U3}\label{eq:U3}
\end{align}
with 
$$a_3 = \frac{\E^{\frac{1}{6}}}{12\sqrt{\pi}} \,\alpha''\leq 0.1271.$$
Thus, we have proved so far:
\begin{proposition}
Under the hypotheses stated at the beginning of this section, for any index $s$ such that $2 \leq s+1 <\frac{\lambda}{4}$,
$\frac{1}{2}\sum_{k \in \N} |\nu^{(s+1)}(k)-\nu^{(s)}(k)|$ is smaller than
$$A_1+A_2+A_3 +\frac{1}{2} \,|b_{s+1}| \sum_{k\leq \alpha'\lambda}  \left|\underbrace{\int_{-\lambda^{-\frac{1}{3}}}^{\lambda^{-\frac{1}{3}}} (\E^{\I \xi}-1)^{s+1} \,\E^{-\frac{\lambda \xi^2}{2} - \I x (\lambda^{\frac{1}{2}}\xi)} \,\frac{\!\DD{\xi}}{2\pi}}_{I_{k,3}^{(s)}}\right|
$$
where $k=\lambda+x\lambda^{\frac{1}{2}}$ and $A_1$, $A_2$ and $A_3$ are respectively bounded from above by \eqref{eq:U1}, \eqref{eq:U2} and \eqref{eq:U3}.
 \end{proposition}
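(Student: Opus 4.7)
The statement is essentially a bookkeeping summary: it collects the three already-derived error bounds and identifies what is left over. My plan is to simply organize the decomposition in the correct order, verify that the three terms $A_1$, $A_2$, $A_3$ account for all the discarded pieces, and confirm that the residue is $I_{k,3}^{(s)}$ as displayed.

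First, I start from the Fourier inversion identity
\[
\nu^{(s+1)}(k)-\nu^{(s)}(k) = b_{s+1}\int_{(-\pi,\pi)} (\E^{\I\xi}-1)^{s+1}\,\E^{\lambda(\E^{\I\xi}-1)-\I k\xi}\,\frac{\DD{\xi}}{2\pi} = I_k^{(s)}
\]
and split the sum $\frac{1}{2}\sum_{k\in\N}|\nu^{(s+1)}(k)-\nu^{(s)}(k)|$ according to whether $k>\alpha'\lambda$ or $k\leq\alpha'\lambda$, where $\alpha' = \frac{5}{4}\alpha$ was fixed above. The contribution of $k>\alpha'\lambda$ is controlled by means of the Poisson--Charlier representation of $\nu^{(s+1)}(k)-\nu^{(s)}(k)$ together with the Chernov bound on the tail of $\mathcal{P}(\tfrac54\lambda)$; this is exactly \eqref{eq:U1}, giving the term $A_1$.

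Next, for each $k\leq\alpha'\lambda$ I split the Fourier integral $I_k^{(s)} = I_{k,1}^{(s)}+I_{k,2}^{(s)}$ according to whether $|\xi|>\lambda^{-1/3}$ or $|\xi|\leq\lambda^{-1/3}$. The outer piece $I_{k,1}^{(s)}$ is dominated by an incomplete gamma integral using $\mathrm{Re}(\E^{\I\xi}-1)\leq-\tfrac{2\xi^2}{\pi^2}$, and summed over at most $\alpha''\lambda$ values of $k$ yields the bound \eqref{eq:U2}, i.e.\ the term $A_2$. For the inner piece $I_{k,2}^{(s)}$, I approximate the exponent $\lambda(\E^{\I\xi}-1)-\I k\xi$ by its quadratic Taylor expansion $-\tfrac{\lambda\xi^2}{2}-\I x(\lambda^{1/2}\xi)$, where $k=\lambda+x\lambda^{1/2}$. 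The pointwise error uses $|\E^z-1|\leq|z|\E^{|z|}$ applied to $z=\lambda(\E^{\I\xi}-1-\I\xi+\tfrac{\xi^2}{2})$, whose modulus is at most $\tfrac16$ on $|\xi|\leq\lambda^{-1/3}$; integrating $\xi^{s+4}\E^{-\lambda\xi^2/2}$ against this error and summing over the $\leq\alpha''\lambda$ admissible $k$ produces \eqref{eq:U3}, i.e.\ the term $A_3$.

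After these three removals, the only surviving quantity is the sum over $k\leq\alpha'\lambda$ of $|b_{s+1}|$ times the modulus of the Gaussian-approximated integral $I_{k,3}^{(s)}$ displayed in the statement, which is precisely the remainder term. There is no genuine obstacle here — every ingredient has already been computed in the preceding pages — so the proof reduces to writing the triangle inequality
\[
\tfrac{1}{2}\sum_{k\in\N}|\nu^{(s+1)}(k)-\nu^{(s)}(k)| \leq A_1+A_2+A_3+\tfrac{1}{2}|b_{s+1}|\sum_{k\leq\alpha'\lambda}|I_{k,3}^{(s)}|
\]
and citing \eqref{eq:U1}, \eqref{eq:U2}, \eqref{eq:U3} for the three bounds. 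The only care required is the arithmetic check that $\alpha'\lambda+1\leq\alpha''\lambda$ under the hypothesis $\lambda>4(s+1)\geq 8$, which has already been done in the derivation of $A_2$.
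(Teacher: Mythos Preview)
Your proposal is correct and follows exactly the paper's approach: the proposition is a bookkeeping summary of the three preceding estimates \eqref{eq:U1}, \eqref{eq:U2}, \eqref{eq:U3}, and your decomposition (tail in $k$, then tail in $\xi$, then Taylor approximation of the exponent) matches the paper's order precisely. The only content is the triangle inequality you wrote down, together with the already-verified arithmetic $\alpha'\lambda+1\leq\alpha''\lambda$ for $\lambda\geq 8$.
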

Thus, in the inequality
$$\dtv(\mu,\nu^{(r)}) \leq \frac{1}{2} \sum_{s \geq r} \sum_{k \in \N} |\nu^{(s+1)}(k) - \nu^{(s)}(k)|,$$
we have removed from the right-hand side the terms
\begin{itemize}
     \item with $s+1>\frac{\lambda}{4}$ (upper bound \eqref{eq:U0}),
     \item  with $k>\alpha'\lambda$ (upper bound \eqref{eq:U1}),
     \item and then with $\xi>\lambda^{-\frac{1}{3}}$ in the Fourier inversion formula for $\nu^{(s+1)}(k) - \nu^{(s)}(k)$ (upper bounds \eqref{eq:U2} and \eqref{eq:U3}).
 \end{itemize}   
 The next Section \ref{sec:hermite} is devoted to the analysis of the remaining terms $I_{k,3}^{(s)}$; the summation of all the estimates will then be performed in Section \ref{sec:riemann_sum}.
\bigskip

\section{Estimates of partial Hermite functions}\label{sec:hermite}
We keep the hypotheses stated at the very beginning of Section \ref{sec:fourier_inversion}, and which are those of our  Theorem \ref{main:A}. In order to control the integral $I_{k,3}^{(s)}$, we use the expansion in series
$$(\E^{\I \xi}-1)^{s+1} = \sum_{ n_1,\ldots, n_{s+1} \geq 1} \frac{(\I \xi)^{n_1+\cdots+n_{s+1}}}{n_1!\cdots n_{s+1}!}.$$
For $m \geq s+1$, set 
$$
C_{m,s} = \sum_{\substack{n_1,\ldots,n_{s+1} \geq 1 \\ n_1+\cdots+n_{s+1}=m}} \frac{1}{n_1!\cdots n_{s+1}!}  ,
$$
so that 
$(\E^{\I \xi}-1)^{s+1} = \sum_{m\geq s+1} C_{m,s}\,(\I \xi)^{m}$. By comparison with the sum over all non-negative integers $n_1, n_2,\ldots,n_{s+1}$, we see that  $C_{m,s} \leq \frac{(s+1)^m}{m!}$. On the other hand, 
$$I_{k,3}^{(s)} = \sum_{m \geq s+1} C_{m,s} \,\lambda^{-\frac{m+1}{2}}\,(-1)^m\,\int_{-\lambda^{\frac{1}{6}}}^{\lambda^{\frac{1}{6}}} (\I u)^m \,\E^{-\frac{u^2}{2} + \I x u} \,\frac{\!\DD{u}}{2\pi}.$$
We would like to replace the integrals over $(-\lambda^{\frac{1}{6}},\lambda^{\frac{1}{6}})$ by integrals over $\R$. Recall that for any $m \geq 0$, the Hermite polynomial $H_m(x)$ is given by:
$$\E^{-\frac{x^2}{2}}\,H_m(x) = (-1)^m\,\int_\R (\I u)^m \,\E^{-\frac{u^2}{2} + \I xu}\,\frac{\!\DD{u}}{\sqrt{2\pi}};$$
see \cite[Chapter V]{Sze39}.
The classical Cramér inequality states that $|H_m(x)|\leq \E^{\frac{x^2}{4}}\,\sqrt{m!}$ for any $x \in \R$ and any integer $m$. In the sequel, we prove a similar inequality when the integral is taken over an interval $[M,+\infty)$ instead of $\R$.
\medskip

\subsection{Partial Hermite functions as integrals of Hermite polynomials}
Fix $M>0$, and set $f_m(x) = (-1)^m\,\int_M^\infty (\I u)^m \,\E^{-\frac{u^2}{2}+\I xu}\,\frac{\!\DD{u}}{\sqrt{2\pi}}$. We have $f_m(x) = (-1)^m\,\frac{\partial^m}{\partial x^m} f_0(x)$, and
$$f_0 = \mathcal{F}\left(S_M(u)\,\E^{-\frac{u^2}{2}}\right),$$
where $S_M$ is the Heaviside step function and $(\mathcal{F}f)(x) = (2\pi)^{-1/2}\int_\R \E^{\I x u}\,f(u)\,\DD{u}$ is the Fourier transform. The inverse Fourier transform is given by $(\mathcal{F}^{-1}g)(u) =  (2\pi)^{-1/2}  \int_\R \E^{-\I x u}\,g(x)\,\DD{x}$. The Fourier transform leaves the function $$N(u)= \E^{-\frac{u^2}{2}}$$ invariant. Since $\mathcal{F}^{-1}(f*g) = \sqrt{2\pi}\,(\mathcal{F}^{-1}f)\,(\mathcal{F}^{-1}g)$, if $S_M(u) = (\mathcal{F}^{-1}K_M)(u)$, then 
$$f_0=\mathcal{F}((\mathcal{F}^{-1}K_M)(u) \,(\mathcal{F}^{-1}N)(u)) = \frac{1}{\sqrt{2\pi}}\,(K_M *N).$$
However, the Fourier transform $K_0$ of $S_0$ is given by
$$K_0(x) = \frac{\I}{\sqrt{2\pi}\,x} + \sqrt{\frac{\pi}{2}}\,\delta_0(x);$$
see for instance \cite[Chapter 9]{Dav02}. Then, $K_M(x) = \E^{\I M x}\,K_0(x)$ and we get:
$$f_0(x) = \frac{\I}{2\pi} \left( \int_{\R} \E^{\I M y-\frac{(x-y)^2}{2}}\,\frac{\!\DD{y}}{y} \right) + \frac{1}{2}\,\E^{-\frac{x^2}{2}}.$$ 
In this formula, the first term involves a principal value, and it can be replaced by:
\begin{align*}
\int_{\R} \E^{\I M y-\frac{(x-y)^2}{2}}\,\frac{\!\DD{y}}{y} &= 2 \E^{-\frac{x^2}{2}} \int_0^\infty \E^{-\frac{y^2}{2}}  \,\frac{\sinh((x+\I M)y)}{y} \DD{y} \\ 
&=2 \E^{-\frac{x^2}{2}} \sum_{n=0}^\infty \frac{(x+\I M)^{2n+1}}{(2n+1)!} \int_0^\infty \E^{-\frac{y^2}{2}} \,y^{2n}\DD{y} \\ 
&=\sqrt{2\pi}(x+\I M) \,\E^{-\frac{x^2}{2}} \sum_{n=0}^\infty \frac{1}{(2n+1)\, (n!)}\left(\frac{(x+\I M)^2}{2}\right)^n \\ 
&=\sqrt{2\pi}(x+\I M) \int_{t=0}^1 \E^{-\frac{x^2}{2}} \,\E^{\frac{t^2(x+\I M)^2}{2}}\DD{t}.
\end{align*}
We have $$\frac{x^2}{2}-\frac{t^2(x+\I M)^2}{2} = \frac{1}{2}\left(\sqrt{1-t^2}\,x - \frac{\I t^2 M}{\sqrt{1-t^2}}\right)^2 + \frac{t^2M^2}{2(1-t^2)},$$ 
and on the other hand, $\E^{-\frac{x^2}{2}}\,H_m(x) = (-1)^m\,\frac{\partial^m}{\partial x^m}(\E^{-\frac{x^2}{2}})$. So, for any $m \geq 0$,
\begin{align*}
(-1)^m\,\frac{\partial^m}{\partial x^m}\left(\E^{-\frac{x^2}{2}} \,\E^{\frac{t^2(x+\I M)^2}{2}}\right) = \E^{-\frac{x^2}{2}} \,\E^{\frac{t^2(x+\I M)^2}{2}}\,(1-t^2)^{\frac{m}{2}}\,H_m\!\left(\sqrt{1-t^2}\,x- \frac{\I t^2 M}{\sqrt{1-t^2}}\right)
\end{align*}
and we get the semi-explicit formula:
\begin{align*}
f_m(x) &= \frac{\I(x+\I M)}{\sqrt{2\pi}} \int_{t=0}^1 \E^{-\frac{x^2}{2}} \,\E^{\frac{t^2(x+\I M)^2}{2}}\,(1-t^2)^{\frac{m}{2}}\,H_m\!\left(\sqrt{1-t^2}\,x- \frac{\I t^2 M}{\sqrt{1-t^2}}\right) \DD{t} \\ 
&- \frac{\I}{\sqrt{2\pi}} \int_{t=0}^1 \E^{-\frac{x^2}{2}} \,\E^{\frac{t^2(x+\I M)^2}{2}}\,(1-t^2)^{\frac{m-1}{2}}\,H_{m-1}\!\left(\sqrt{1-t^2}\,x - \frac{\I t^2 M}{\sqrt{1-t^2}}\right) \DD{t} +\frac{1}{2}\,\E^{-\frac{x^2}{2}}\,H_m(x).
\end{align*}
\medskip

\subsection{Control of the integrands}
In the integral formula for $f_m(x)$, in order to remove the factor $\frac{1}{\sqrt{1-t^2}}$ which is divergent when $t$ goes to $1$, we shall use the following identity:
$$(1-t^2)^{\frac{m}{2}}\,H_m\!\left(\sqrt{1-t^2}\,x- \frac{\I t^2 M}{\sqrt{1-t^2}}\right) = \sum_{l=0}^{\lfloor \frac{m}{2}\rfloor} t^{2l}\,\binom{m}{2l}\,\frac{(2l)!}{2^l\,l!}\,H_{m-2l}((1-t^2)\,x-\I t^2 M).$$
This is a particular case of:
\begin{lemma}[Multiplication theorem for Hermite polynomials]
For any $m \geq 0$,
$$H_m(ax) = \sum_{l=0}^{\lfloor \frac{m}{2}\rfloor} a^{m-2l}\,(a^2-1)^l\,\binom{m}{2l}\,\frac{(2l)!}{2^l\,l!}\,H_{m-2l}(x).$$
\end{lemma}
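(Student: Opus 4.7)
The plan is to prove the identity via generating functions. The (probabilist's) Hermite polynomials used in the paper satisfy
$$G(x,t) := \sum_{m=0}^\infty H_m(x)\,\frac{t^m}{m!} = \E^{xt-\frac{t^2}{2}},$$
which follows at once by interchanging the sum and integral in the representation $\E^{-x^2/2}H_m(x) = (-1)^m\int_\R(\I u)^m\E^{-u^2/2+\I xu}\,\DD{u}/\sqrt{2\pi}$ used earlier in the paper, and recognising the Fourier transform of a Gaussian.

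First, the generating function of the left-hand side is immediate: substituting $ax$ for $x$ in $G$ gives
$$\sum_{m=0}^\infty H_m(ax)\,\frac{t^m}{m!} = \E^{axt-\frac{t^2}{2}}.$$
Next, I would form the generating function of the right-hand side, interchange the order of summation (first over $l\geq 0$, then over $m\geq 2l$) and reindex by $n = m-2l$. The simplification
$$\binom{n+2l}{2l}\,\frac{(2l)!}{(n+2l)!} = \frac{1}{(2l)!\,n!}\cdot \frac{(2l)!\,(n+2l)!}{(n+2l)!} = \frac{1}{n!}$$
makes the double sum factorise as
$$\Bigl(\sum_{n\geq 0} H_n(x)\,\frac{(at)^n}{n!}\Bigr)\Bigl(\sum_{l\geq 0} \frac{1}{l!}\Bigl(\frac{(a^2-1)\,t^2}{2}\Bigr)^{\!l}\Bigr) = \E^{x(at)-\frac{(at)^2}{2}}\cdot \E^{\frac{(a^2-1)t^2}{2}}.$$

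Finally, I would check the elementary algebraic identity
$$\Bigl(x(at)-\tfrac{(at)^2}{2}\Bigr) + \tfrac{(a^2-1)\,t^2}{2} = axt - \tfrac{t^2}{2},$$
which shows that both generating functions coincide. Matching coefficients of $t^m$ on each side then yields the announced multiplication formula. There is no genuine obstacle: the only point that requires care is the bookkeeping of binomial coefficients during the reindexing $m\mapsto (n,l)$, after which the identity collapses to a one-line exponent computation. (An alternative route would be a direct induction on $m$ using the three-term recursion $H_{m+1}(y) = y\,H_m(y) - m\,H_{m-1}(y)$, but the generating function argument is cleaner and shorter.)
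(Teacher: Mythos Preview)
Your argument is correct, but it follows a different route from the paper. The paper proves the identity by writing out the explicit monomial expansion
$$H_m(x) = \sum_{l=0}^{\lfloor m/2\rfloor} \frac{(-1)^l\,m!}{2^l\,(m-2l)!\,l!}\,x^{m-2l}$$
together with its inverse $x^m = \sum_{l} \frac{m!}{2^l\,(m-2l)!\,l!}\,H_{m-2l}(x)$, then substituting $ax$ into the first formula, expanding each $(ax)^{m-2j}$ via the second, and collecting terms with a binomial identity. Your generating-function approach bypasses this double change of basis entirely: once the exponent identity $x(at)-\tfrac{(at)^2}{2}+\tfrac{(a^2-1)t^2}{2}=axt-\tfrac{t^2}{2}$ is written down, the result is immediate. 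This is shorter and arguably more transparent, since it exposes the multiplication theorem as a one-line consequence of the Gaussian generating function; the paper's approach, by contrast, stays closer to the polynomial structure and requires no analytic input beyond the recursion $H_{m+1}=xH_m-mH_{m-1}$. One small remark: the displayed simplification of $\binom{n+2l}{2l}\frac{(2l)!}{(n+2l)!}$ in your write-up is awkwardly phrased (the middle expression is garbled), though the conclusion $=\frac{1}{n!}$ is of course right.
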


\begin{proof}
The Hermite polynomials satisfy the recurrence equation $$H_{m+1}(x)=x\,H_m(x)-m\,H_{m-1}(x);$$ see \cite[Equation (5.5.8)]{Sze39}. This implies the explicit expression:
$$H_m(x) = \sum_{l=0}^{\lfloor \frac{m}{2}\rfloor} \frac{(-1)^l\,m!}{2^l\,(m-2l)!\,l!}\,x^{m-2l}.$$
The inverse of this formula is:
$$x^m = \sum_{l=0}^{\lfloor \frac{m}{2} \rfloor} \frac{m!}{2^l\,(m-2l)!\,l!}\,H_{m-2l}(x).$$
Therefore,
\begin{align*}
H_m(ax) &= \sum_{2j \leq m} \frac{(-1)^j\,m!}{2^j\,(m-2j)!\,j!}\,a^{m-2j}\,x^{m-2j} \\ 
&= \sum_{j,k\,|\,2(j+k) \leq m} \frac{(-1)^j\,m!}{2^{j+k}\,(m-2j-2k)!\,j!\,k!}\,a^{m-2j}\, H_{m-2j-2k}(x) \\ 
&= \sum_{l=0}^{\lfloor \frac{m}{2}\rfloor} \left(\sum_{j=0}^l \binom{l}{j}\,(-1)^j  a^{m-2j}\right)\frac{m!}{2^{l}\,(m-2l)!\,l!}\,H_{m-2l}(x) \\
&= \sum_{l=0}^{\lfloor \frac{m}{2}\rfloor} a^{m-2l}\,(a^2-1)^l \frac{m!}{2^{l}\,(m-2l)!\,l!}\,H_{m-2l}(x),
\end{align*}
which is the claimed formula up to a rewriting of the binomial coefficient.
\end{proof}
\medskip

The Hermite polynomial $H_n(z)$ is given  by the contour integral
$$H_n(z) = \frac{n!}{2\I \pi}\oint \E^{zu - \frac{u^2}{2}}\frac{\!\DD{u}}{u^{n+1}};$$
see \cite[Equation (5.5.12)]{Sze39}.
As a consequence, for any $z \in \C$, setting $|z| = \sqrt{n}\,y$, we get
\begin{align*}
\frac{|H_n(z) |}{n!} &\leq \min_{R>0}\left(\E^{R|z| + \frac{R^2}{2} - n\log R}\right) \\ 
&\leq \E^{\frac{n-n\log n}{2}}\,\exp\left(n\left(y\,\frac{\sqrt{y^2+4}-y}{4} - \log\left(\frac{\sqrt{y^2+4}-y}{2}\right)\right)\right).
\end{align*}
The function $$y \mapsto y\,\frac{\sqrt{y^2+4}-y}{4} - \log(\frac{\sqrt{y^2+4}-y}{2})$$ is increasing from $\R_+$ to $\R_+$, and it behaves asymptotically as $\log y$. Therefore, it is smaller than $\frac{y^2}{4}+c$ for some constant $c$, which can for instance be taken equal to $\frac{\log 3}{2}$. By using the Stirling estimates $\sqrt{2\pi n}\leq \frac{n!}{\E^{n\log n -n}} \leq \E\sqrt{n}$, we thus get a complex version of the Cramér inequality for Hermite functions:
$$\forall n \geq1,\,\,\,\forall z \in \C,\,\,\,|H_n(z)| \leq \E^{\frac{|z|^2}{4}} \,3^{\frac{n}{2}} \sqrt{n!} \, \E^{\frac{1}{2}}\,n^{\frac{1}{4}}.$$
So,
\begin{align*}
&\left|\E^{-\frac{x^2}{2}} \,\E^{\frac{t^2(x+\I M)^2}{2}}\,(1-t^2)^{\frac{m}{2}}\,H_m\!\left(\sqrt{1-t^2}\,x- \frac{\I t^2 M}{\sqrt{1-t^2}}\right)\right| \\ 
&\leq 3^{\frac{m}{2}}\,\E^{-\frac{(1-t^4)x^2+t^2(2-t^2)M^2}{4}+\frac{1}{2}} \left(\sum_{l=0}^{\lfloor \frac{m}{2}\rfloor} \frac{m!}{l! \sqrt{(m-2l)!}}\, (m-2l)^{\frac{1}{4}} \left(\frac{t^{2}}{6}\right)^{\!l}\right).
\end{align*}
When $m=2n$ is even, $\sqrt{(m-2l)!} = \sqrt{(2(n-l))!} \geq 2^{n-l} \frac{(8\pi)^{\frac{1}{4}}}{\E}\, (n-l)!\,(m-2l)^{-\frac{1}{4}}$, so the sum is smaller than
$$
D_m=\frac{\E}{(8\pi)^{\frac{1}{4}}} \, m^{\frac{1}{2}}\,\frac{m!}{\left(\frac{m}{2}\right)!}\left(\frac{2}{3}\right)^{\!\frac{m}{2}} \leq 2 \, m^{\frac{m+1}{2}} \left(\frac{4}{3\E}\right)^{\!\frac{m}{2}}. 
$$
When $m=2n+1$ is odd, $\sqrt{(m-2l)!} = \sqrt{(2(n-l)+1)!} \geq 2^{n-l} \frac{(8\pi)^{\frac{1}{4}}}{\E}\, (n-l)!\, (m-2l)^{\frac{1}{4}}$, so the sum is in this case smaller than
$$
D_m=\frac{\E}{(8\pi)^{\frac{1}{4}}} \,\frac{m!}{(\frac{m-1}{2})!} \left(\frac{2}{3}\right)^{\!\frac{m-1}{2}} \leq 2 \,m^{\frac{m+1}{2}}   \left(\frac{4}{3\E}\right)^{\!\frac{m}{2}}.
$$
 Set $\Theta(x,M) = \sqrt{x^2+M^2}\,\int_{t=0}^1 \E^{-\frac{(1-t^4)x^2+t^2(2-t^2)M^2}{4}}\DD{t}$, and $g_m(x) = \frac{1}{2}\,\E^{-\frac{x^2}{2}}\,H_m(x)-f_m(x)$. We are interested in the case where $M = \lambda^{\frac{1}{6}}$ and $|x|=O( \lambda^{\frac{1}{2}})$. The previous calculations prove that for $m \geq 2$ and $M \geq \sqrt{2}$, 
\begin{align*}
|g_m(x)| \leq \frac{5\E}{4\sqrt{2\pi}} \,\left(\frac{4m}{\E}\right)^{\!\frac{m+1}{2}}\,\Theta(x,M).
\end{align*}
Since $H_m(x) = (-1)^m\,H_m(-x)$, we have:
\begin{align*}
&(-1)^m\int_{-M}^M (\I u)^m\,\E^{-\frac{u^2}{2}+\I xu}\,\frac{\!\DD{u}}{\sqrt{2\pi}} \\ 
&= \E^{-\frac{x^2}{2}}\,H_m(x) - (-1)^m \int_M^\infty (\I u)^m\,\E^{-\frac{u^2}{2}+\I xu}\,\frac{\!\DD{u}}{\sqrt{2\pi}} - (-1)^m \int_{-\infty}^{-M} (\I u)^m\,\E^{-\frac{u^2}{2}+\I xu}\,\frac{\!\DD{u}}{\sqrt{2\pi}} \\ 
&= \E^{-\frac{x^2}{2}}\,H_m(x) - f_m(x) - (-1)^m f_m(-x) =g_m(x) + (-1)^m\,g_m(-x).
\end{align*}
Therefore, for $s +1 < \frac{\lambda}{4}$,
\begin{align*}
I_{k,3}^{(s)}&\leq  \frac{5\E}{2\pi} \left(\sum_{m \geq s+1} C_{m,s}\,\left(\frac{4m}{\E\lambda}\right)^{\!\frac{m+1}{2}}\right)\Theta(x,\lambda^{\frac{1}{6}})\\ 
&\leq \frac{5\E^{\frac{1}{2}}}{\sqrt{2\pi^3}}\,\lambda^{-\frac{1}{2}}\underbrace{\left(\sum_{m \geq s+1} \left(\frac{4\E(s+1)^2}{\lambda m}\right)^{\frac{m}{2}}\right)}_{E_{s}}\Theta(x,\lambda^{\frac{1}{6}}).
\end{align*}
\medskip

\subsection{Control of the series}
Let us explain how to control the series $E_{s}$. If we consider $$\left(\frac{4\E(s+1)^2}{\lambda m}\right)^{\frac{m}{2}}$$ as a function of $m>0$, then it is increasing until $m=\frac{4(s+1)^2}{\lambda}$, and then decreasing. Since we assume $\lambda>4(s+1)$, the first term of the series is therefore the largest one. We bound the $(s+1)$ first terms of the series by $(\frac{4\E(s+1)}{\lambda})^{\frac{s+1}{2}}$, and for $m \geq 2(s+1)$, we have:
$$\left(\frac{4\E(s+1)^2}{\lambda (m+1)}\right)^{\frac{m+1}{2}} \leq \sqrt{\frac{4(s+1)^2}{\lambda m}}\left(\frac{4\E(s+1)^2}{\lambda m}\right)^{\!\frac{m}{2}}\leq \sqrt{\frac{1}{2}}\left(\frac{4\E(s+1)^2}{\lambda m}\right)^{\!\frac{m}{2}}.$$
Therefore,
\begin{align*}
E_{s} &\leq (s+1)\left(\frac{4\E(s+1)}{\lambda}\right)^{\frac{s+1}{2}} + \left(\sum_{n=0}^\infty 2^{-\frac{n}{2}} \right) \left(\frac{2\E(s+1)}{\lambda}\right)^{s+1} \\ 
&\leq \left(1 + \frac{\E}{8(1-2^{-\frac{1}{2}})} \right)(s+1)\left(\frac{4\E(s+1)}{\lambda}\right)^{\frac{s+1}{2}} .
\end{align*}
We can summarise the estimates computed in this section:
\begin{proposition}\label{prop:theta}
Under the hypotheses stated at the very beginning of Section \ref{sec:fourier_inversion}, for any $s$ such that $2 \leq s+1 < \frac{\lambda}{4}$, if $k=\lambda + x\lambda^{\frac{1}{2}}$, then
$$\frac{1}{2}\,|b_{s+1}|\,I_{k,3}^{(s)} \leq a_4\, \lambda^{-\frac{1}{2}}\,(s+1) \left(\frac{4\E\tau^2}{\lambda}\right)^{\!\frac{s+1}{2}} \,\Theta(x,\lambda^{\frac{1}{6}}),$$ 
with $\Theta(x,M) = \sqrt{x^2+M^2}\,\int_{t=0}^1 \E^{-\frac{(1-t^4)x^2+t^2(2-t^2)M^2}{4}}\DD{t}$, and 
$$a_4=\frac{5\E^{\frac{1}{2}}}{(2\pi)^{\frac{3}{2}}} \left(1 + \frac{\E}{8(1-2^{-\frac{1}{2}})}\right)\leq 1.1307.$$
\end{proposition}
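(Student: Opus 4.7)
The plan is to assemble the intermediate bounds already derived throughout Section \ref{sec:hermite}. The proposition is essentially a synthesis statement: the pointwise estimate for the truncated Hermite-like integral (built from the multiplication theorem and a Cram\'er-type inequality), the power-series expansion of $(\E^{\I\xi}-1)^{s+1}$, and the tail bound for a rapidly decreasing series must be combined.

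First, I would start from the expansion
\[
I_{k,3}^{(s)} = \sum_{m \geq s+1} C_{m,s}\,\lambda^{-(m+1)/2}\,(-1)^m \int_{-\lambda^{1/6}}^{\lambda^{1/6}} (\I u)^m\,\E^{-u^2/2 + \I xu}\,\frac{du}{2\pi},
\]
obtained by expanding $(\E^{\I\xi}-1)^{s+1}$ and rescaling via $u = \lambda^{1/2}\xi$. Then, invoking the identity $(-1)^m \int_{-M}^M (\I u)^m \E^{-u^2/2+\I xu}\,du/\sqrt{2\pi} = g_m(x) + (-1)^m g_m(-x)$ with $M = \lambda^{1/6}$, together with the bound $|g_m(x)| \leq (5\E/(4\sqrt{2\pi}))\,(4m/\E)^{(m+1)/2}\,\Theta(x,M)$ already established from the multiplication theorem and the Cram\'er-type inequality $|H_n(z)| \leq \E^{|z|^2/4}\,3^{n/2}\sqrt{n!}\,\E^{1/2}\,n^{1/4}$, I obtain a pointwise estimate for each truncated Hermite integral in terms of $\Theta(x,\lambda^{1/6})$.

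Next, I would insert this pointwise bound together with $C_{m,s} \leq (s+1)^m/m!$ and the Stirling lower bound $m! \geq (m/\E)^m \sqrt{2\pi m}$, reducing the control of $I_{k,3}^{(s)}$ to the control of
\[
E_s = \sum_{m \geq s+1}\left(\frac{4\E(s+1)^2}{\lambda m}\right)^{m/2}.
\]
Under the assumption $\lambda > 4(s+1)$, the function $m \mapsto (4\E(s+1)^2/(\lambda m))^{m/2}$ reaches its maximum at $m = 4(s+1)^2/\lambda < s+1$, so the summand is already decreasing on $m \geq s+1$. I would bound the first $s+1$ terms uniformly by $(4\E(s+1)/\lambda)^{(s+1)/2}$ and compare the remaining tail (starting at $m = 2(s+1)$) against a geometric series of common ratio at most $2^{-1/2}$, which follows from the same hypothesis; this yields the closed form $E_s \leq (1 + \E/(8(1-2^{-1/2})))\,(s+1)\,(4\E(s+1)/\lambda)^{(s+1)/2}$.

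Finally, combining everything with $|b_{s+1}| \leq (\tau/\sqrt{s+1})^{s+1}$ produces the target inequality: the factor $(s+1)^{(s+1)/2}$ from $E_s$ cancels exactly against $(s+1)^{-(s+1)/2}$ from $|b_{s+1}|$, leaving the clean power $(4\E\tau^2/\lambda)^{(s+1)/2}$, a prefactor $(s+1)$, and the explicit universal constant $a_4$. The main delicacy is the balancing of the competing growths, $C_{m,s} \sim ((s+1)\E/m)^m/\sqrt{2\pi m}$ against the Hermite contribution of order $m^{(m+1)/2}$; their product combines into a convergent series \emph{precisely} because of the hypothesis $s+1 < \lambda/4$, which also guarantees that the dominant contribution comes from $m \approx s+1$.
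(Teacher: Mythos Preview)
Your proposal is correct and follows essentially the same route as the paper: it assembles the bound on $|g_m(x)|$ (via the multiplication theorem and the complex Cram\'er inequality), the coefficient estimate $C_{m,s}\le (s+1)^m/m!$ with Stirling, the two-stage control of $E_s$ (uniform bound on the first $s+1$ terms, geometric tail from $m\ge 2(s+1)$), and the cancellation of $(s+1)^{\pm(s+1)/2}$ against $|b_{s+1}|$. The constants and the logic match the paper's derivation exactly.
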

\medskip

\section{Riemann summation of the estimates}\label{sec:riemann_sum}
We now gather all the estimates previously computed in order to prove Theorem \ref{main:A}. Setting $\eps=\frac{4\,\tau}{\sqrt{\lambda}}<1$, and we start by comparing $\eps^{r+1}$ to the sums of the upper bounds \eqref{eq:U1}, \eqref{eq:U2} and \eqref{eq:U3} over indices $s$ such that $r+1\leq s+1 < \frac{\lambda}{4}$.

\begin{itemize}
    \item The sum of the upper bounds \eqref{eq:U1} is smaller than
    $$\frac{1}{4(1-\frac{1}{\sqrt{2}})}\,\eps^{r+1}\leq 0.8536\,\eps^{r+1}.$$ 
    
    \item Set $\eps_1 = \frac{\pi\tau}{2\sqrt{\E\lambda}}$. The sum of the upper bounds \eqref{eq:U2} is smaller than
    $$a_2 \sum_{s \geq r} (s+3)^{\frac{3}{2}}\,(\eps_1)^{s+1} \leq a_2\,\frac{(r+3)^{\frac{1}{2}}}{r+2}\sum_{s \geq r} (s+3)(s+2)\,(\eps_1)^{s+1}$$
    with $\eps_1\leq \frac{\pi}{8\sqrt{\E}}$. The series is equal to
    \begin{align*}
    &(r+3)(r+2)\,(\eps_1)^{r+1}\left(\frac{1}{1-\eps_1}+ \frac{2\eps_1}{(r+2)(1-\eps_1)^2}+ \frac{2(\eps_1)^2}{(r+2)(r+3)(1-\eps_1)^3}\right) \\ 
    &\leq 1.6077\,(r+3)(r+2)\,(\eps_1)^{r+1}.
    \end{align*}
    Thus, the contribution of the upper bounds \eqref{eq:U2} is smaller than 
    $$6.3167\,(r+3)^{\frac{3}{2}}\,(\eps_1)^{r+1}\leq 6.3167\,(r+3)^{\frac{3}{2}}\,\left(\frac{\pi}{8\sqrt{\E}}\right)^{r+1}\, \eps^{r+1}\leq 2.8669\,\eps^{r+1}.$$
    
    \item Similarly, with $\eps_2 = \frac{\tau}{\sqrt{\E\lambda}}<\frac{1}{4\sqrt{\E}}$, the sum of the upper bounds \eqref{eq:U3} is smaller than
    \begin{align*}
    &a_3\,\frac{(r+3)^{\frac{1}{2}}}{r+2}\sum_{s \geq r} (s+3)(s+2)\,(\eps_2)^{s+1} \leq 0.1685\,(r+3)^{\frac{3}{2}} (\eps_2)^{r+1} \leq 0.031\,\eps^{r+1}.
    \end{align*}
\end{itemize}
Therefore, taking also into account \eqref{eq:U0} and using Proposition \ref{prop:theta}, we see that for any $r \geq 1$,
$$ \dtv(\mu,\nu^{(r)}) \leq 5.0223\,\eps^{r+1} + a_4\left(\sum_{s \geq r} \,(s+1)\, \left(\frac{4\E\tau^2}{\lambda}\right)^{\!\frac{s+1}{2}}\right) \left(\sum_{k\leq \alpha'\lambda}\lambda^{-\frac{1}{2}} \,\Theta(x,\lambda^{\frac{1}{6}})\right).$$
The sum over indices $s$ is easy to compare with $\eps^{r+1}$: with $\eps_3=\frac{2\sqrt{\E}\tau}{\sqrt{\lambda}}<\frac{\sqrt{\E}}{2}$, it is equal to 
$$(r+1)(\eps_3)^{r+1}\left(\frac{1}{1-\eps_3}+\frac{\eps_3}{(r+1)(1-\eps_3)^2}\right)\leq 25.898\,\eps^{r+1}. $$
So,
$$ \dtv(\mu,\nu^{(r)}) \leq 5.0223\,\eps^{r+1} + 29.2811\,\eps^{r+1} \left(\sum_{k\leq \alpha'\lambda}\lambda^{-\frac{1}{2}} \,\Theta(x,\lambda^{\frac{1}{6}})\right)$$
and in the remaining sum, we can assume $\lambda \geq 8$, since otherwise the upper bound \eqref{eq:U0} suffices. 
\medskip

In order to control a sum $\sum_{k \leq \alpha'\lambda}\Theta(x,M)$, we split the integral $\int_{t=0}^1 \E^{-\frac{(1-t^4)x^2 + t^2(2-t^2)M^2}{4}} \DD{t}$ according to whether $t^2$ is smaller or larger than $\frac{1}{2}$. If $t^2<\frac{1}{2}$, then $$\frac{(1-t^4)x^2 + t^2(2-t^2)M^2}{4} \geq \frac{3x^2}{16}+\frac{3t^2M^2}{8},$$ whereas if $t^2>\frac{1}{2}$, then $$\frac{(1-t^4)x^2 + t^2(2-t^2)M^2}{4} \geq  \frac{3(1+\frac{1}{\sqrt{2}})(1-t)x^2}{8}+\frac{3M^2}{16} .$$
Therefore,
 $$\Theta(x,\lambda^{\frac{1}{6}}) \leq (|x|+\lambda^{\frac{1}{6}})\left(\sqrt{\frac{2\pi}{3}}\,\frac{1}{\lambda^{\frac{1}{6}}}\,\E^{-\frac{3x^2}{16}}+ \left(1-\frac{1}{\sqrt{2}}\right)\E^{-\frac{3\lambda^{\frac{1}{3}}}{16}}\,\frac{1-\E^{-\frac{3x^2}{16}}}{\frac{3x^2}{16}}\right).$$
 We want to take the Riemann sum of the values $\lambda^{-\frac{1}{2}}\,\Theta(x,\lambda^{\frac{1}{6}})$ where $x$ runs over the set of real numbers 
 $$\left\{\frac{k-\lambda}{\lambda^{\frac{1}{2}}},\,\,k \in \lle0,\lfloor \alpha'\lambda\rfloor \rre\right\}.$$
 Let us remove the two values where $k=\lfloor \lambda \rfloor$ and $k = \lfloor \lambda \rfloor + 1$ ; they correspond to values of $|x|$ smaller than $\lambda^{-\frac{1}{2}}$, and they yield a contribution smaller than 
 $4.8086\,\lambda^{-\frac{1}{2}} $. The other values of $x$ can be considered as middle points of intervals $[a,b]$ with $b-a=\lambda^{-\frac{1}{2}}$, these intervals being included respectively in $[-\lambda^{\frac{1}{2}}-\frac{1}{2}\,\lambda^{-\frac{1}{2}},0]$ and $[0,(\alpha'-1)\lambda^{\frac{1}{2}}+\frac{1}{2}\,\lambda^{-\frac{1}{2}}]$.
For any twice-differentiable function $\theta : \R \to \R$ and any interval $[a,b]$, recall that
$$\left|\left(\int_a^b \theta(x)\DD{x} \right)-(b-a)\,\theta\!\left(\frac{a+b}{2}\right)\right| \leq \frac{\|\theta''\|_{\infty}(b-a)^3}{24};$$
see for instance the estimates at the end of \cite[Section 6.3]{Zor04}. We shall use this upper bound with the following functions $\theta$ on $\R_+$ :
\begin{center}
\begin{tabular}{|c|c|}
\hline $\theta$ & upper bound on $\|\theta''\|_\infty$ \\
\hline $x\,\E^{-\frac{3x^2}{16}}$ & $0.85$ \\ 
\hline $\E^{-\frac{3x^2}{16}}$ & $\frac{3}{8}$\\ 
\hline  $\frac{1-\E^{-\frac{3x^2}{16}}}{\frac{3x}{16}}$  & $0.54$\\ 
\hline $\frac{1-\E^{-\frac{3x^2}{16}}}{\frac{3x^2}{16}}$ & $\frac{3}{16}$\\ 
\hline 
\end{tabular}
\end{center}
We obtain:
\begin{align*}
\sum_{\substack{k \leq \alpha'\lambda \\ k \neq \lfloor \lambda \rfloor, \lfloor \lambda \rfloor+1}} \lambda^{-\frac{1}{2}}\,\Theta(x,\lambda^{\frac{1}{6}})
 &\leq I_1\,\lambda^{-\frac{1}{6}} + I_2 + \E^{-\frac{3\lambda^{\frac{1}{3}}}{6}}\,I_3 + I_4 + 0.1421\,\lambda^{-\frac{1}{2}},
\end{align*}
with \begin{align*}
I_1 &= \sqrt{\frac{2\pi}{3}} \int_\R |x|\,\E^{-\frac{3x^2}{16}}\DD{x} =\frac{16}{3}\sqrt{\frac{2\pi}{3}} \leq 7.7185;\\ 
I_2 &= \sqrt{\frac{2\pi}{3}} \int_\R \E^{-\frac{3x^2}{16}}\DD{x} = \frac{4\pi\sqrt{2}}{3}\leq 5.9239;\\ 
I_3&= \left(1-\frac{1}{\sqrt{2}}\right) \int_{-\lambda^{\frac{1}{2}}-\frac{1}{2}\,\lambda^{-\frac{1}{2}}}^{(\alpha'-1)\lambda^{\frac{1}{2}}+\frac{1}{2}\,\lambda^{-\frac{1}{2}}} \frac{1-\E^{-\frac{3x^2}{16}}}{\frac{3|x|}{16}}\,\DD{x} \leq1.87 \int_{-\lambda^{\frac{1}{2}}-\frac{1}{2}\,\lambda^{-\frac{1}{2}}}^{(\alpha'-1)\lambda^{\frac{1}{2}}+\frac{1}{2}\,\lambda^{-\frac{1}{2}}} \frac{1}{1+|x|}\,\DD{x};\\ 
I_4&= \left(1-\frac{1}{\sqrt{2}}\right) \int_{\R} \frac{1-\E^{-\frac{3x^2}{16}}}{\frac{3x^2}{16}}\,\DD{x} = \frac{4\sqrt{3\pi}(2-\sqrt{2})}{3} \leq 2.3979.
\end{align*}
The integral $I_3$ yields logarithms which are compensated by the term $\E^{-\frac{3\lambda^{1/3}}{16}}$, and it is then easy to check that the worst case is when $\lambda=8$; in this case,
$$\sum_{k\leq \alpha'\lambda}\lambda^{-\frac{1}{2}} \,\Theta(x,\lambda^{\frac{1}{6}}) \leq 19.2366.$$
Combining this estimate with the previous calculations, we finally obtain 
$$\dtv(\mu,\nu^{(r)})\leq  570\,\eps^{r+1}, $$
and this ends the proof of Theorem \ref{main:A}. 
\bigskip

\section{Hankel contours and unconditional upper bounds}\label{sec:hankel}
The purpose of this last section is how explain how to compute for the three examples from Section 2.2 the upper bound $\eps_n$ involved in our main Theorem \ref{main:C}. These computations rely on standard arguments from complex analysis, but as far as we know the estimates that we obtain have never been written with an explicit remainder $\eps_n$.\medskip

We consider a sequence $(X_n)_{n \in \N}$ of integer-valued random variables, and a sequence $(a_n)_{n\in \N}$ of real numbers, such that the double generating series
$$F(z,w) = \sum_{n=0}^\infty a_n\,z^n\,\esper[w^{X_n}]$$
has the following properties:
\begin{enumerate}[label=(H\arabic*)]
    \item\label{item:hankel_1} One can find a domain $\Delta(m,M,\phi) \times D(0,\rho)$ with $\rho>1$,
    such that $F(z,w)$ extends to a biholomorphic function on this domain.

    \item\label{item:hankel_2} There exist two constants $\theta>0$ and $T>0$ and a holomorphic function $L(w)$ on $D(0,\rho)$ such that 
    $$ F(z,w) = \left(\frac{1}{1-\frac{z}{m}}\right)^{\!\theta w} \,\exp(L(w)+O(|z-m|))$$
    on the domain $ \Delta(m,M,\phi) \times D(0,\rho)$, with an implied constant $T$ in the $O(\cdot)$.
\end{enumerate}
The numbers of cycles $(C_n)_{n \in \N}$ of random permutations chosen according to the probability measures $\proba_{n,(\theta_k)_{k \geq 1}}$ and the numbers of irreducible divisors $(D_n)_{n \in \N}$ of random polynomials in $\For_{q\geq 3}[X]$ satisfy the hypotheses \ref{item:hankel_1} and \ref{item:hankel_2} above. Indeed, with $a_n=\frakh_n(\Theta)$, $\rho=2$ and $L(w)=Kw$, under the assumptions of Proposition \ref{prop:random_permutations}, we have for $(C_n)_{n\in\N}$
$$F_C(z,w) = \sum_{n=0}^\infty z^n\,\frakh_n(w\Theta) = \exp(w\,\mathfrak{P}(\Theta,z))=\left(\frac{1}{1-\frac{z}{m}}\right)^{\!\theta w} \,\exp(Kw+O(2|z-m|)).$$
The terms $\theta$, $m$, $M$, $K$ and the implied constant in the $O(\cdot)$ above are the same as in the statement of Proposition \ref{prop:random_permutations} for the generating series $\mathfrak{P}(\Theta,z)$. For $(D_n)_{n\in\N}$ and $a_n=q^n$, we have
$$
F_D(z,w) = \exp(\sum_{k=1}^\infty \frac{\mathfrak{I}_q(z^k)\,(1-(1-w)^k)}{k})$$
with 
\begin{align*}
\mathfrak{I}_q(y) &= \sum_{n=1}^\infty I_q(n)\,y^n = \sum_{n=1}^\infty \sum_{d\mathrel{|}n} \mu(d)\,q^{\frac{n}{d}}\, \frac{y^n}{n} \\ 
&=\sum_{d=1}^\infty \frac{\mu(d)}{d}\, \log(\frac{1}{1-qy^d}) = \log(\frac{1}{1-qy}) + \mathfrak{R}_q(y),
\end{align*}
where $\mathfrak{R}_q(y) = \sum_{d=2}^\infty \frac{\mu(d)}{d}\, \log(\frac{1}{1-qy^d})$, which is convergent on the disk $D(0,q^{-\frac{1}{2}})$. Therefore,
$$F_D(z,w) = \left(\frac{1}{1-qz}\right)^{\!w} \exp(\mathfrak{R}_q(z)\,w +\sum_{k=2}^\infty \frac{\mathfrak{I}_q(z^k)\,(1-(1-w)^k)}{k}).$$
Let us remark right away that $F_D(z,1) = \frac{1}{1-qz}$, and therefore that $\sum_{k=2}^\infty \frac{\mathfrak{I}_q(z^k)}{k} = -\,\mathfrak{R}_q(z)$. As a consequence, we can rewrite the double generating series as:
$$F_D(z,w) = \left(\frac{1}{1-qz}\right)^{\!w} \exp(\mathfrak{R}_q(z)\,(w-1)+\sum_{k=2}^\infty \frac{\mathfrak{I}_q(z^k)\,(-1)^{k-1}\,(w-1)^k}{k}).$$
Therefore, the hypotheses \ref{item:hankel_1} and \ref{item:hankel_2} hold with $\rho=\frac{5}{4}$, $\theta=1$, $m=q^{-1}$, $M=q^{-\frac{3}{4}}$, 
\begin{align*}
L(w)&= R_q\,(w-1) +\sum_{k=2}^\infty \frac{\mathfrak{I}_q(q^{-k})\,(-1)^{k-1}\,(w-1)^k}{k} ; \\
T&=\frac{9}{4}\, \frac{q^{\frac{3}{2}}}{(q^{\frac{1}{2}}-1)(q^{\frac{3}{4}}-1)}+\frac{81}{16}\,\frac{q^{3}}{(q^{\frac{1}{2}}-1)(q^{\frac{3}{2}}-1)(q^{\frac{3}{4}}-\frac{9}{4})} .
\end{align*}
Indeed, for $|z| < q^{-\frac{3}{4}}$,
\begin{align*}
|\mathfrak{R}_q(z)-\mathfrak{R}_q(q^{-1})| &\leq \sum_{d=2}^\infty \frac{1}{d} \left|\log(\frac{1}{1-qz^d}) - \log(\frac{1}{1-q^{1-d}})\right| \\ 
&\leq \sum_{d=2}^\infty \frac{1}{d(1-q^{1-\frac{3d}{4}})} \,|qz^d-q^{1-d}|\leq \sum_{d=2}^\infty  \frac{q}{(1-q^{-\frac{1}{2}})} \, q^{-\frac{3(d-1)}{4}}\,|z-q^{-1}| \\ 
&\leq \frac{q^{\frac{3}{2}}}{(q^{\frac{1}{2}}-1)(q^{\frac{3}{4}}-1)}\,|z-q^{-1}|,
\end{align*}
and similarly, 
$$|\mathfrak{I}_q(z^k) - \mathfrak{I}_q(q^{-k})| \leq \frac{k\,q^{3}}{(q^{\frac{1}{2}}-1)(q^{\frac{3}{2}}-1)}\,q^{-\frac{3(k-1)}{4}}\,|z-q^{-1}| $$
for $k \geq 2$, so
\begin{align*}
\left|\sum_{k=2}^\infty \frac{(\mathfrak{I}_q(z^k)-\mathfrak{I}_q(q^{-k}))\,(1-w)^k}{k} \right| &\leq \frac{9}{4}\,\frac{q^{3}}{(q^{\frac{1}{2}}-1)(q^{\frac{3}{2}}-1)} \left(\sum_{k=2}^\infty \left(\frac{9}{4}\,q^{-\frac{3}{4}}\right)^{k-1}\right)|z-q^{-1}| \\ 
&\leq \frac{81}{16}\,\frac{q^{3}}{(q^{\frac{1}{2}}-1)(q^{\frac{3}{2}}-1)(q^{\frac{3}{4}}-\frac{9}{4})} \,|z-q^{-1}|
\end{align*}
since $|w-1| \leq 1+\rho=\frac{9}{4}$. The angle $\phi$ can here be chosen arbitrary small, since the only singularity on $D(0,M) \times D(0,\rho)$ of the double generating series $F_D(z,w)$ is at $z=m$.
\medskip

Following \cite{FO90}, under the assumptions \ref{item:hankel_1} and \ref{item:hankel_2}, let us compute $f_n(w)=a_n\,\esper[w^{X_n}]$ by using the Cauchy integral formula with respect to the following Hankel contour:
\begin{center}
\begin{tikzpicture}[scale=0.8]
\draw [->] (-5,0) -- (5,0);
\draw [->] (0,-5) -- (0,5);
\draw [thick,violet,shift={(2,0)},->]  (-30:2) -- (-30:0.5) arc (330:30:0.5) -- (30:2) arc (15:180:3.864);
\draw [thick,violet,shift={(2,0)}] (180:5.864) arc (180:345:3.864) -- (-30:2);

\draw [<->,shift={(2,0)}] (1:1.3) arc (1:29:1.3);
\draw [shift = {(2,0)}] (15:1.7) node {$\phi$};
\draw [<->,shift={(0,-0.3)}] (0.05,0) -- (1.95,0);
\draw [dashed] (2,0) -- (2,-0.4);
\draw (1,-0.6) node {$m$};
\draw [<->] (45:0.1) -- (45:3.8);
\draw (53:2.5) node {$M$};
\draw [violet] (120:4.15) node {$\gamma$};
\end{tikzpicture}
\end{center}
The small circle is chosen of radius $\frac{m}{n}$, and the path $\gamma$ is split into the following parts:
\begin{itemize}
    \item $\gamma_1=\{z\in \C \,\,|\,\, |z|=M,\,\, |\arg(z-m)| \geq \phi \}$;
    \item $\gamma_2=\{z\in \C \,\,|\,\, |z|\leq M,\,\,|z-m| \geq \frac{m}{n},\,\, \arg(z-m) = - \phi \}$;
    \item $\gamma_3=\{z\in \C \,\,|\,\, |z-m| = \frac{m}{n},\,\, |\arg(z-m)| \geq  \phi \}$;
    \item $\gamma_4=\{z\in \C \,\,|\,\, |z|\leq M,\,\,|z-m| \geq \frac{m}{n},\,\, \arg(z-m) = \phi \}$.
\end{itemize}
We have
\begin{align*}
f_n(w)\,\E^{-L(w)} &= \frac{1}{2\I \pi} \oint_{\gamma} \frac{F(z,w)\,\E^{-L(w)}}{z^{n+1}}\DD{z} \\ 
&=\frac{\Gamma(n+\theta w)}{\Gamma(\theta w)\,n!\,m^n} + \frac{1}{2\I\pi}\left(\int_{\gamma_1}+\cdots +\int_{\gamma_4}\right)\left(\frac{\E^{O(|z-m|)}-1}{(1-\frac{z}{m})^{\theta w}\,z^{n+1}}\DD{z}\right)
\end{align*}
since the $n$-th coefficient of $(1-\frac{z}{m})^{-\theta w}$ as a power series in $z$ is $\frac{\Gamma(n+\theta w)}{\Gamma(\theta w)\,n!\,m^n}$.

\begin{lemma}\label{lem:hankel_estimate_A}
Suppose $n \geq 2$, and large enough so that $$n^2\,\left(\frac{m}{M}\right)^{n} \leq \left(\min\left(\frac{M}{m}-1,\frac{1}{\frac{M}{m}+1}\right)\right)^{\!\theta \rho}\,\E^{-Tm}.$$ Then, under the hypotheses \ref{item:hankel_1} and \ref{item:hankel_2}, 
$$\left|f_n(w)\,\E^{-L(w)}-\frac{\Gamma(n+\theta w)}{\Gamma(\theta w)\,n!\,m^n}\right|\leq A\,m^{-n}\,n^{\theta x-2}\,\E^{TM},$$
with $A=1+\frac{16}{\E}+\frac{\Gamma(2+\theta\rho)}{\E\pi (\cos \phi)^{2+\theta\rho}}$ and $x=\Re(w)$.
\end{lemma}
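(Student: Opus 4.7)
The plan is to apply Cauchy's integral formula to $F(z,w)/z^{n+1}$ along the indented Hankel contour $\gamma=\gamma_1\cup\gamma_2\cup\gamma_3\cup\gamma_4$ depicted above, exactly as in the Flajolet--Odlyzko transfer method, and to compare the result with what one obtains by replacing $F(z,w)$ by its leading singular part $\E^{L(w)}(1-z/m)^{-\theta w}$. Using hypothesis~\ref{item:hankel_2} one factorises
$$
\frac{F(z,w)\,\E^{-L(w)}}{z^{n+1}} = \frac{(1-z/m)^{-\theta w}}{z^{n+1}}\Bigl(1 + \bigl(\E^{O(|z-m|)}-1\bigr)\Bigr),
$$
and observes that the integral of the first summand is closed by contractibility of $\gamma$ together with a small circle around $z=0$, giving exactly the $n$-th Taylor coefficient $\Gamma(n+\theta w)/(\Gamma(\theta w)\,n!\,m^n)$ of the generalised binomial series. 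It then suffices to bound the error integral on each of the four pieces.

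On each piece I would use the two universal estimates $|\E^{O(|z-m|)}-1|\leq T|z-m|\,\E^{T|z-m|}$ (from the inequality $|\E^v-1|\leq|v|\E^{|v|}$ together with hypothesis~\ref{item:hankel_2}) and $|(1-z/m)^{-\theta w}|\leq |1-z/m|^{-\theta x}\,\E^{\theta\rho\pi}$ (from $|\Im(w)|\leq\rho$ and $|\arg(1-z/m)|\leq\pi$). The small circle $\gamma_3$ of radius $m/n$ provides the dominant contribution: there $|1-z/m|=1/n$, $|z-m|=m/n$, $|z|\geq m(1-1/n)$, the length is $2\pi m/n$, and $(1-1/n)^{-(n+1)}\leq 2\E$ for $n\geq 2$, so the piece is bounded by a constant times $m^{-n}\,n^{\theta x-2}\,\E^{TM}$, contributing the $16/\E$ summand of $A$.

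For the two rays $\gamma_2,\gamma_4$ I would parameterise $z=m+r\E^{\pm\I\phi}$ with $r\in[m/n,r^{*}]$, use the law-of-cosines estimate $|z|\geq m(1+(r/m)\cos\phi)$ to obtain $|z|^{-(n+1)}\leq m^{-(n+1)}\E^{-nr\cos\phi/m}$, and then substitute $u=nr\cos\phi/m$. After the substitution the integral factorises as a Gamma-type integral $\int_0^\infty u^{1-\theta x}\,\E^{-u}\DD{u}\leq\Gamma(2+\theta\rho)$, producing the summand $\Gamma(2+\theta\rho)/(\E\pi(\cos\phi)^{2+\theta\rho})$ of $A$, the factor $1/\E$ arising from the elementary bound $v\E^{-v}\leq 1/\E$ applied to the residual exponential term $T|z-m|\E^{T(|z-m|-M)}$. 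Finally, on the outer arc $\gamma_1$ where $|z|=M$, the estimate $|1-z/m|^{-\theta x}\leq\min(M/m-1,\,m/(M+m))^{-\theta\rho}$ covers both signs of $\theta x$ when $|w|\leq\rho$, and the quantitative hypothesis $n^2(m/M)^n\leq\min(\cdot)^{\theta\rho}\E^{-Tm}$ converts the naive bound proportional to $M^{-n}$ into an $m^{-n}\,n^{\theta x-2}\,\E^{TM}$ bound, accounting for the remaining $1$ in $A$.

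The main obstacle I expect is the careful interplay between the factor $T|z-m|\,\E^{T|z-m|}$ coming from $\E^{O(|z-m|)}-1$ and the factor $\E^{TM}$ we want in the final bound: the rescaling on the segments $\gamma_2,\gamma_4$ must convert the $T$-factor cleanly into an absolute constant via $v\E^{-v}\leq 1/\E$, and the specific choice of the minimum $\min(M/m-1,\,m/(M+m))$ in the quantitative hypothesis is precisely what is required to handle both $\theta x>0$ and $\theta x<0$ uniformly on $\gamma_1$. Beyond these two points, the argument reduces to the routine bookkeeping of prefactors, one per contour piece, each of which contributes one of the three explicit summands of $A$.
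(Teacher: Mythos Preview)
Your proposal takes essentially the same route as the paper: decompose the Hankel contour into $\gamma_1,\gamma_2,\gamma_3,\gamma_4$, subtract the binomial main term $\Gamma(n+\theta w)/(\Gamma(\theta w)\,n!\,m^n)$, and bound the error on each piece so that $\gamma_1$, $\gamma_3$, and $\gamma_2\cup\gamma_4$ contribute respectively the summands $1$, $16/\E$, and $\Gamma(2+\theta\rho)/(\E\pi(\cos\phi)^{2+\theta\rho})$ of $A$, with the $1/\E$ on the rays coming from $Tm\,\E^{-Tm}\leq 1/\E$. The one discrepancy is your blanket estimate $|(1-z/m)^{-\theta w}|\leq|1-z/m|^{-\theta x}\E^{\theta\rho\pi}$: the paper never introduces this argument factor but instead treats the cases $x\geq 0$ and $x<0$ separately and writes $|(1-z/m)^{-\theta w}|=|1-z/m|^{-\theta x}$ throughout, so carrying your extra $\E^{\theta\rho\pi}$ would not reproduce the stated constant $A$.
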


\begin{proof}
We split the remainder in four parts $r_{n,1}(w)+r_{n,2}(w)+r_{n,3}(w)+r_{n,4}(w)$, according to the partition $\gamma = \gamma_1 \sqcup \gamma_2 \sqcup \gamma_3 \sqcup \gamma_4$ of the Hankel contour. Suppose first $x \geq 0$. Then, it is immediate that
\begin{align*}
|r_{n,1}(w)| &\leq M^{-n}\,\left(\frac{1}{\frac{M}{m}-1}\right)^{\!\theta x} \,\E^{T(m+M)} \leq m^{-n}\,n^{\theta x-2}\,\E^{TM}
\end{align*} 
by assumption on $n$.
We then can evaluate $r_{n,2}$ and $r_{n,4}$ as follows:
\begin{align*}
|r_{n,2}(w)| &\leq \frac{T\,\E^{T(M-m)}}{2\pi} \int_{s=\frac{m}{n}}^{\infty} \left|\frac{1}{1-\frac{m+s\E^{\I\theta}}{m}}\right|^{\,\theta x}\, |m+s\E^{\I \phi}|^{-n-1}\,s\DD{s} \\ 
&\leq \frac{T\,\E^{T(M-m)}}{2\pi\,m^{n-1}}\,n^{\theta x-2} \int_{t=1}^{\infty} t^{1-\theta x}\, \left|1+\frac{t\cos \phi}{n}\right|^{-n} \DD{t} \\ 
&\leq \frac{Tm\,\E^{T(M-m)}}{2\pi}\,m^{-n}\,n^{\theta x-2} \int_{t=1}^{\infty} t^{1-\theta x} \,\E^{-t \cos \phi} \DD{t}\\ 
&\leq \frac{1}{2\pi\E\,(\cos \phi)^2}\,m^{-n}\,n^{\theta x-2}\,\E^{TM}.
\end{align*}
Finally, we have $|\exp(O(|z-m|))-1)| \leq T\exp(\frac{Tm}{n})\,\frac{m}{n}$ on $\gamma_3$, so
$$|r_{n,3}(w)| \leq 8\,Tm\exp(\frac{Tm}{n})\,m^{-n}\,n^{\theta x-2}\leq \frac{16}{\E}\,m^{-n}\,n^{\theta x-2}\,\E^{TM}.$$
Suppose now that $x<0$. Then, we get similarly
\begin{align*}
|r_{n,1}(w)| &\leq M^{-n}\,\left(\frac{1}{\frac{M}{m}+1}\right)^{\!\theta x} \,\E^{T(m+M)} ;\\ 
|r_{n,2}(w)| &\leq  \frac{Tm\,\E^{T(M-m)}\,\Gamma(2-\theta x)}{2\pi\,(\cos \phi)^{2-\theta x }}\,m^{-n}\,n^{\theta x-2};\\ 
|r_{n,3}(w)| &\leq 8\,Tm\,\exp(\frac{Tm}{n})\,m^{-n}\,n^{\theta x-2},
\end{align*}
whence the result in both cases.
\end{proof}

\begin{lemma}\label{lem:hankel_estimate_B}
If $n\geq  2\theta \rho + 1$ and $w \in D(0,\rho)$, then
$$\left|\frac{\Gamma(n+\theta w)}{n!} - n^{\theta w -1}\right| \leq B\,n^{\theta x-2}$$
with $B=3\left(\theta\rho+\frac{1}{2}\right)^2\,\E^{\frac{3}{2}(\theta\rho+\frac{1}{2})}$ and $x=\Re(w)$.
\end{lemma}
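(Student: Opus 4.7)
The strategy is to compare $\Gamma(n+\theta w)/n!$ to $n^{\theta w - 1}$ via Stirling's formula with the Binet remainder: $\log \Gamma(z) = (z - 1/2)\log z - z + \frac{1}{2}\log(2\pi) + \mu(z)$, where $|\mu(z)| \leq 1/(12\,\Re z)$ on the right half-plane. Applying this identity at $z = n + \theta w$ and $z = n + 1$ is legitimate since $n \geq 2\theta\rho + 1$ forces $\Re(n + \theta w) \geq n - \theta\rho \geq \theta\rho + 1 > 0$; the same hypothesis gives $|\theta w|/n \leq 1/2$ uniformly for $w \in \overline{D(0,\rho)}$, which is what makes the subsequent Taylor expansions quantitatively controllable.

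Taking the difference of the two Stirling expansions and writing $\log(n+a) = \log n + \log(1+a/n)$ for $a \in \{\theta w, 1\}$, I would expand $\log(1+z) = z - z^2/2 + O(|z|^3)$ using the Lagrange remainder, valid with an explicit constant for $|z| \leq 1/2$. After reorganising, the $\log n$ contributions assemble into $(\theta w - 1)\log n$, the constants cancel, the $1/n$ contributions combine exactly into $\theta w(\theta w - 1)/(2n)$, and everything else is bounded by a multiple of $(\theta\rho + 1)^3/n^2$; this includes the cubic Taylor remainders and the telescoped Binet difference $\mu(n+\theta w) - \mu(n+1) = O(1/n^2)$, which follows from $\mu(z) = 1/(12z) + O(1/|z|^2)$. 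Absorbing the $O(1/n^2)$ contributions into the leading term using $n \geq 2(\theta\rho + 1/2)$, one arrives at
$$\frac{\Gamma(n+\theta w)}{n!} = n^{\theta w - 1}\,\E^{u_n(w)}, \qquad |u_n(w)| \leq \frac{3\,(\theta\rho + 1/2)^2}{n},$$
uniformly for $w \in \overline{D(0,\rho)}$. The bound on $|u_n(w)|$ follows from $|\theta w(\theta w - 1)| \leq (\theta\rho + 1/2)^2 + 1/4$ together with the absorbed higher-order remainders, and the factor $3$ is precisely what is needed to accommodate both the $1/2$ coefficient in front of the leading term and the residual contributions.

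The conclusion follows from the elementary inequality $|\E^u - 1| \leq |u|\,\E^{|u|}$. Since $n \geq 2(\theta\rho + 1/2)$ gives $|u_n(w)| \leq 3(\theta\rho + 1/2)/2$, and hence $\E^{|u_n(w)|} \leq \E^{3(\theta\rho+1/2)/2}$, multiplying by $|n^{\theta w - 1}| = n^{\theta x - 1}$ yields
$$\left|\frac{\Gamma(n+\theta w)}{n!} - n^{\theta w - 1}\right| = n^{\theta x - 1}\,|\E^{u_n(w)} - 1| \leq n^{\theta x - 1}\,|u_n(w)|\,\E^{|u_n(w)|} \leq B\,n^{\theta x - 2}$$
with $B = 3(\theta\rho + 1/2)^2\,\E^{3(\theta\rho+1/2)/2}$, as claimed.

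The main obstacle is the explicit bookkeeping of the numerical constants through Stirling's expansion and the Taylor remainders: getting the factor of $3$ exactly correct (rather than a slightly larger absolute constant) requires careful accounting, though a looser constant would not affect the subsequent applications of the lemma in the estimation of the approximation scheme's quality.
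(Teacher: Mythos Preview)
Your proposal is correct and follows essentially the same approach as the paper: bound the logarithmic difference $\log\Gamma(n+\theta w)-\log(n!)-(\theta w-1)\log n$ by $3(\theta\rho+\tfrac12)^2/n$ via Stirling's formula, then exponentiate using $|\E^u-1|\le |u|\,\E^{|u|}$. The only minor technical difference is the variant of Stirling used: you work with the standard Binet form $\log\Gamma(z)=(z-\tfrac12)\log z-z+\tfrac12\log(2\pi)+\mu(z)$ and control the difference $\mu(n+\theta w)-\mu(n+1)$ via the next Stirling term, whereas the paper uses the shifted expansion $\log\Gamma(z+1)=(z+\tfrac12)\log(z+\tfrac12)-(z+\tfrac12)+\tfrac12\log(2\pi)+\sum_{k\ge1}\int_0^{1/2}\log\bigl(1-t^2/(z+k)^2\bigr)\,dt$, which packages the remainder as an explicit series that is directly bounded by $1/(6n)$; both routes yield the same final constant.
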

\begin{proof}
The easiest proof consists in using an explicit form of Stirling estimates, namely,
$$\log \Gamma(z+1) = \left(z+\frac{1}{2}\right)\log(z+\frac{1}{2})-\left(z+\frac{1}{2}\right) + \frac{1}{2}\log(2\pi) + \sum_{k=1}^\infty \int_0^{\frac{1}{2}} \log(1-\frac{t^2}{(z+k)^2})\DD{t}.$$
This formula is valid for any complex number $z$ such that $\Re(z)>0$. Therefore,
\begin{align*}
&\log \Gamma(n+\theta w) - \log(n!) - \left(\theta w -1\right)\log n \\ 
&=\left(n+\theta w -\frac{1}{2}\right)\log(1 +\frac{\theta w-\frac{1}{2}}{n}) - \left(n+\frac{1}{2}\right)\log(1+\frac{1}{2n}) -\left(\theta w -1\right)  \\ 
&\quad  + \sum_{k=1}^\infty \int_0^{\frac{1}{2}} \log(1-\frac{t^2}{(n+\theta w-1+k)^2}) - \log(1-\frac{t^2}{(n+k)^2}) \DD{t}.
\end{align*}
By using the integral Taylor formula $\log(1+z)-z = - z^2 \int_0^1 \frac{1-t}{(1+tz)^2}\DD{t}$, one shows that the first line is smaller than 
$$(6 \log 2 -2)\,\frac{(\theta \rho + \frac{1}{2})^2}{n}.$$ 
On the second line, the function $\log(1+z)$ is Lipschitz with constant $\frac{4}{3}$ for arguments with modulus smaller than $\frac{1}{4}$, so 
\begin{align*}
\left|\log(1-\frac{t^2}{(n+\theta w-1+k)^2}) - \log(1-\frac{t^2}{(n+k)^2})\right|&\leq \frac{4t^2}{3}\left|\frac{1}{(n+\theta w-1+k)^2}-\frac{1}{(n+k)^2}\right| \\
&\leq 8t^2\,\frac{\theta\rho+1}{(n+k)^3} \leq \frac{4t^2}{(n+k)^2},
\end{align*}
and the series is smaller than $\frac{1}{6}\sum_{k=1}^\infty \frac{1}{(n+k)^2} \leq \frac{1}{6n}$. Putting everything together and simplifying a bit the expression, we obtain:
$$|\log \Gamma(n+\theta w) - \log(n!) - \left(\theta w -1\right)\log n| \leq 3\,\frac{(\theta \rho + \frac{1}{2})^2}{n}.$$
The estimate follows readily by taking the exponential.
\end{proof}

\begin{theorem}\label{thm:hankel_estimate}
We place ourselves under the hypotheses \ref{item:hankel_1} and \ref{item:hankel_2}, and we also assume that $n$ is large enough, so that the conditions of Lemmas \ref{lem:hankel_estimate_A} and \ref{lem:hankel_estimate_B} hold. Then, the rescaled moment generating series $\esper[w^{X_n}]$ satisfies the following uniform estimate over $D(0,\rho)$:
$$\esper[w^{X_n}]\,\E^{-\theta (\log n)(w-1)-(L(w)-L(1))}= \left(\frac{\Gamma(\theta)}{\Gamma(\theta w)} + O\left(\frac{A'+B\left|\frac{\Gamma(\theta)}{\Gamma(\theta w)}\right|}{n}\right)\right) \left(1+O\left(\frac{A'+B}{n}\right)\right)^{-1},$$
where \begin{align*}
A'&=\left(1+\frac{16}{\E}+\frac{\Gamma(2+\theta\rho)}{\E\pi (\cos \phi)^{2+\theta\rho}}\right)\Gamma(\theta)\,\E^{TM}\qquad ;\qquad 
B= 3\left(\theta\rho+\frac{1}{2}\right)^2\,\E^{\frac{3}{2}(\theta\rho+\frac{1}{2})},
\end{align*}
and where the implied constants in the $O(\cdot)$'s are both equal to $1$.
\end{theorem}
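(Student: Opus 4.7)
Set $g_n(w) = f_n(w)\,\E^{-L(w)} = a_n\,\E^{-L(w)}\,\esper[w^{X_n}]$. The plan is to produce a uniform two-term asymptotic for $g_n(w)$ by chaining Lemmas \ref{lem:hankel_estimate_A} and \ref{lem:hankel_estimate_B}, then specialise the result at $w$ and at $w=1$, and divide. Since all complex-analytic content has been absorbed into the two lemmas, the rest of the argument is purely bookkeeping of remainders.

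First, I would substitute the approximation $\Gamma(n+\theta w)/n! = n^{\theta w - 1} + R_2(w)$ from Lemma \ref{lem:hankel_estimate_B}, with $|R_2(w)| \leq B\,n^{\theta x - 2}$, into the conclusion of Lemma \ref{lem:hankel_estimate_A}. Factoring out the principal term $n^{\theta w - 1}/(\Gamma(\theta w)\,m^n)$, whose modulus equals $n^{\theta x - 1}/(|\Gamma(\theta w)|\,m^n)$, one obtains
\[
g_n(w) \;=\; \frac{n^{\theta w - 1}}{\Gamma(\theta w)\,m^n}\left(1 + O\!\left(\frac{B + A\,|\Gamma(\theta w)|\,\E^{TM}}{n}\right)\right),
\]
with an implied constant equal to $1$. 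Specialising at $w=1$ yields
\[
g_n(1) \;=\; \frac{n^{\theta - 1}}{\Gamma(\theta)\,m^n}\left(1 + O\!\left(\frac{A' + B}{n}\right)\right),
\]
where $A' = A\,\Gamma(\theta)\,\E^{TM}$ is precisely the constant defined in the statement.

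Next, since $\esper[w^{X_n}] = f_n(w)/a_n = \E^{L(w)-L(1)}\,g_n(w)/g_n(1)$, dividing these two asymptotics cancels the factor $m^{-n}$ and, after identifying $n^{\theta(w-1)} = \E^{\theta(\log n)(w-1)}$, produces
\[
\esper[w^{X_n}]\,\E^{-\theta(\log n)(w-1) - (L(w)-L(1))} \;=\; \frac{\Gamma(\theta)}{\Gamma(\theta w)} \cdot \frac{1 + O\!\left((B + A\,|\Gamma(\theta w)|\,\E^{TM})/n\right)}{1 + O\!\left((A' + B)/n\right)}.
\]
It remains to absorb the prefactor $\Gamma(\theta)/\Gamma(\theta w)$ into the numerator. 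The identity $|\Gamma(\theta w)|\cdot|\Gamma(\theta)/\Gamma(\theta w)| = \Gamma(\theta)$ exchanges the summand $A\,|\Gamma(\theta w)|\,\E^{TM}$ against $A\,\Gamma(\theta)\,\E^{TM} = A'$ and the summand $B$ against $B\,|\Gamma(\theta)/\Gamma(\theta w)|$, so that the numerator takes the announced form
\[
\frac{\Gamma(\theta)}{\Gamma(\theta w)} + O\!\left(\frac{A' + B\,|\Gamma(\theta)/\Gamma(\theta w)|}{n}\right).
\]

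The only genuine subtlety is the asymmetric appearance of the factor $|\Gamma(\theta)/\Gamma(\theta w)|$ inside the numerator while the denominator has only the $w$-independent constants $A' + B$: this asymmetry is forced by the fact that the error term coming from Lemma \ref{lem:hankel_estimate_A} naturally involves $|\Gamma(\theta w)|$ (through the cofactor $1/\Gamma(\theta w)$ of the principal term), which one then re-expresses against the $w$-independent quantity $\Gamma(\theta)$ sitting inside $A'$. Everything else amounts to pushing constants in and out of a ratio of two $1 + O(1/n)$ expansions, so I do not expect any obstacle beyond careful tracking of the implied constants.
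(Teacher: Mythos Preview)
Your proof is correct and follows exactly the approach the paper intends: the paper's own proof is the single sentence ``This follows immediately from Lemmas \ref{lem:hankel_estimate_A} and \ref{lem:hankel_estimate_B} and from the formula $\esper[w^{X_n}] = \frac{f_n(w)}{a_n} = \frac{f_n(w)}{f_n(1)}$,'' and your write-up simply supplies the bookkeeping that this sentence leaves implicit. In particular, your identification $A' = A\,\Gamma(\theta)\,\E^{TM}$ and the reshuffling step that turns $A\,|\Gamma(\theta w)|\,\E^{TM}$ into $A'$ and $B$ into $B\,|\Gamma(\theta)/\Gamma(\theta w)|$ are exactly what makes the implied constants equal to $1$, as claimed.
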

\begin{proof}
This follows immediately from Lemmas \ref{lem:hankel_estimate_A} and \ref{lem:hankel_estimate_B} and from the formula
$\esper[w^{X_n}] = \frac{f_n(w)}{a_n} = \frac{f_n(w)}{f_n(1)} $.
\end{proof}
\medskip

Let us explain how to use Theorem \ref{thm:hankel_estimate} in order to prove the assumption of Theorem \ref{main:C} for a sequence of random variables $(X_n)_{n \in \N}$. Since $L$ is holomorphic on $D(0,\rho)$ with $\rho>1$, we have a convergent power series
$$L(w)-L(1) = K(w-1) + \sum_{k=2}^\infty \frac{(-1)^{k-1}\,p_k}{k}\,(w-1)^k$$
for some coefficients $K$ and $p_{k \geq 2}$. For the sequence $(C_n)_{n \in \N}$ of numbers of cycles, $K$ is given by the assumption on $\mathfrak{P}(\Theta,z)$ and $p_{k \geq 2} = 0$; whereas for the sequence $(D_n)_{n \in \N}$ of numbers of irreducible divisors, $K=R_q=\mathfrak{R}_q(q^{-1})$ and $p_k=\mathfrak{I}_q(q^{-k})$. Notice on the other hand that the infinite product representation of the $\Gamma$ function yields:
\begin{align*}
\frac{\Gamma(\theta)}{\Gamma(\theta w)} &= \prod_{n=1}^\infty \left(1+\frac{\theta}{n+\theta-1}\,(w-1)\right) \left(1+\frac{1}{n}\right)^{\!-\theta (w -1)} \\ 
&=\frakE(A_\theta',w-1) \,\E^{\gamma_\theta (w-1)}\quad\text{with }\begin{cases}
    A_\theta = \{\frac{\theta}{\theta+n-1},\,\,n \geq 1\}, &\\
    \,\gamma_\theta = \sum_{n=1}^\infty \frac{\theta}{n+\theta-1}-\theta \log(1+\frac{1}{n}).&
\end{cases}
\end{align*}
Therefore, if the parameters $p_{k \geq 2}$ can be written as a specialisation $\frakp_k(B)$ with $B$ square-summable family, then Theorem \ref{thm:hankel_estimate} yields a mod-Poisson convergence result with Bernoulli asymptotics. The parameters are $$\lambda_n=\theta \log n + K + \gamma_\theta,$$ the limiting alphabet is $$A = A_\theta \sqcup B,$$ and the remainder $\eps_n$ is a $O(n^{-1})$ with an explicit constant. In particular, $\eps_n \ll (\lambda_n)^{-\frac{r+3}{2}}$ for any $r \geq 1$, so $\dtv(\mu_n,\nu_{n,*}^{(r)}) = O((\lambda_n)^{-\frac{r+1}{2}})$ for any order of approximation $r \geq 1$. Our discussion yields a proof of Propositions \ref{prop:random_permutations} and \ref{prop:random_polynomials}, and it establishes the validity of our approach for these models.
\medskip

\begin{remark}\label{rem:next}
The analysis of the sequence $(\omega_n)_{n \geq 1}$ of numbers of prime divisors follows the same ideas as above, except that the double generating series $F(z,w)$ needs to be replaced by the bivariate $L$-series
$$F(s,w) = \sum_{n=1}^\infty \frac{w^{\omega(n)}}{n^s} = \prod_{p \in \mathbb{P}} \left(1+\frac{w}{p^s-1}\right).$$
This series is convergent for any $s=\sigma+\I \tau$ with $\sigma>1$; in the sequel we set $\kappa=1+\frac{1}{\log x}$. By the Perron integral formula, if $x$ is not an integer, then
$$A(x,w)=\sum_{n\leq x} w^{\omega(n)} = \frac{1}{2\I\pi}\int_{\kappa-\I \infty}^{\kappa+\I \infty} F(s,w)\,\frac{x^s}{s}\,\DD{s};$$
see \cite[Chapter II.2]{Ten95}. On the other-hand, if $x=n$ is an integer, then $A(n,w) = n\,\esper[w^{\omega_n}]$. The Selberg--Delange method consists in:
\begin{itemize}
    \item writing $F(s,w)$ as a product $G(s,w)\,(\zeta(s))^w$, where $w$ is arbitrary in $D(0,\rho)$  with $\rho >1$, and $G(s,w)$ is a biholomorphic function on the domain $\{s=\sigma+\I \tau\,|\,\sigma > \frac{1}{2}\} \times D(0,\rho)$. 
    \item estimating the integral $A(x,w)$ by deforming the contour $\kappa -\I \infty \to \kappa + \I \infty$ into a Hankel contour as below:
    \begin{center}
\begin{tikzpicture}[scale=1]
\draw [->] (-1,0) -- (7.5,0);
\draw [->] (0,-5) -- (0,5);
\draw (7.8,0) node {$\sigma$};
\draw (0,5.3) node {$\tau$};
\draw (4,-5.3) node {$\frac{1}{8}$};
\draw (5,-5.3) node {$1$};
\draw (6,-5.3) node {$\kappa$};
\draw [dashed] (5,-5) -- (5,5);
\draw [dashed] (6,-5) -- (6,5);
\draw [dashed] (4,-5) -- (4,5);
\draw [dashed] (4,1) -- (0,1);
\draw [dashed] (4,-1) -- (0,-1);
\draw [dashed] (4.581,4) -- (0,4);
\draw [dashed] (4.581,-4) -- (0,-4);
\draw (-0.2,1) node {$1$};
\draw (-0.25,4) node {$T$};
\draw (-0.35,-4) node {$-T$};
\draw (-0.3,-1) node {$-1$};
\draw [thick,violet,shift={(5,0)}] (-1,-1) -- (-1,-0.04) -- (185:0.5) arc (-175:175:0.5) -- (-1,0.04) -- (-1,1);
\draw [thick,violet,domain=1:4,->] plot ({5-1/(1+ln(\x))},{\x}) -- (6,4) -- (6,5);
\draw [thick,violet,domain=1:4] plot ({5-1/(1+ln(\x))},{-\x}) -- (6,-4) -- (6,-5);
\end{tikzpicture}
\end{center}
The form of this contour is related to the existence of a zero-free region for the Riemann $\zeta$ function. It is the piecewise smooth path 
$$\gamma = \gamma_4^- \sqcup \gamma_3^- \sqcup \gamma_2^- \sqcup \gamma_1^- \sqcup \gamma_0 \sqcup \gamma_1^+ \sqcup \gamma_2^+ \sqcup \gamma_3^+ \sqcup \gamma_4^+ $$
with the following parts:
\begin{itemize}
    \item $\gamma_0$ is the union of the circle with center $1$ and radius $\frac{1}{2\log x}$, and of the two horizontal lines $\frac{1}{8} + \I 0_- \to 1-\frac{1}{2\log x} + \I 0_-$ and $1-\frac{1}{2\log x} + \I 0_+ \to \frac{7}{8} + \I 0_+$.
    \item $\gamma_1^\pm$ is the vertical line which connects $\frac{7}{8} +\I 0_{\pm}$ to $\frac{7}{8} \pm \I$.
    \item $\gamma_2^\pm$ is the path $1-\frac{1}{8(1+\log t)} \pm\I t$ for $t \in [1,T]$.
    \item $\gamma_3^\pm$ is the horizontal line which connects $1-\frac{1}{8(1+\log T)} \pm\I T$ to $\kappa\pm \I T$. 
    \item $\gamma_4^\pm$ is the vertical line which connects $\kappa \pm \I T$ to $\kappa \pm \I \infty$.
\end{itemize}
\end{itemize}
The parameter $T$ is a large real number which can be chosen optimally according to the value of $x$. One can show that if $\rho=\frac{5}{4}$ and $s$ is on the right of the contour $\gamma$, then:
\begin{enumerate}[label=(Z\arabic*)]
    \item\label{item:zeta_properties_1} for any non-trivial root $z$ of the Riemann $\zeta$ (hence, in the critical strip $0<\Re (z) < 1$), $\frac{1}{z}+\frac{1}{s-z}$ has positive real part.
    \item\label{item:zeta_properties_2} $|\log \zeta(s) | \leq \log(1+\log_+ |\Im(s)|) + C$ for some constant $C$.
    \item $G(s,w)$ is uniformly bounded.
\end{enumerate}
Following closely the argument from \cite[Chapter II.5]{Ten95}, one can prove that 
$$\eps_n = \sup_{w \in D(0,\frac{5}{4})} \left|\esper[w^{\omega_n}]\,\E^{-(\log \log n+\gamma)(w-1)} - \frakE(A_\omega',w-1) \right| \leq \frac{K}{\log n}$$
for some explicit constant $K$. Note that all this computation of the remainder $\eps_n$ is unconditional and does not require the Riemann hypothesis (the RH would only enable one to get a much better constant $K$). We plan to explain how to compute a good constant $K$ in a forthcoming work, by modifying a bit the Selberg--Delange method. The compensation argument which consists in introducing $G(s,w)=F(s,w)\,(\zeta(s))^{-w}$ and in using known estimates of the complex powers of $\zeta$ in order to control $$\frac{1}{2\I \pi} \int_\gamma G(s,w)\,(\zeta(s))^w\,\frac{x^s}{s}\DD{s}$$ allows one to deal with general (bivariate) $L$-series $F(s,w)$, but it leads to a loss of accuracy if $F(s,w)$ is explicitly known. Hence, in order to study the sequence $(\omega_n)_{n \in \N}$, one can work directly with the series $F(s,w)$ and a zero-free region of its meromorphic extension, and prove the analogues of \ref{item:zeta_properties_1} and \ref{item:zeta_properties_2} directly for $F(s,w)$. We shall show these computations from analytic number theory in a separate work.
\end{remark}
\bigskip
\bigskip

\printbibliography

\end{document}